\newcommand{\ra}{\rightarrow}
\newcommand{\CC}{\mathbb C}
\newcommand{\PP}{\mathbb P}
\newcommand{\cO}{\mathcal{O}}
\newcommand{\Hom}{\mbox{Hom}}
\newcommand{\rk}{\operatorname{rk}}
\newcommand{\Cl}{\operatorname{Cliff}}
\theoremstyle{plain}
\newtheorem{theorem}{Theorem}[section]
\newtheorem{lem}[theorem]{Lemma}
\newtheorem{prop}[theorem]{Proposition}
\newtheorem{cor}[theorem]{Corollary}
\newtheorem{rem}[theorem]{Remark}
\newtheorem{ques}[theorem]{Question}
\numberwithin{equation}{section}
\begin{document}
\title[Bundles of rank three]{On bundles of rank 3 computing Clifford indices}

\author{H. Lange}
\author{P. E. Newstead}

\address{H. Lange\\Department Mathematik\\
              Universit\"at Erlangen-N\"urnberg\\
              Cauerstrasse 11\\
              D-$91058$ Erlangen\\
              Germany}
              \email{lange@mi.uni-erlangen.de}
\address{P.E. Newstead\\Department of Mathematical Sciences\\
              University of Liverpool\\
              Peach Street, Liverpool L69 7ZL, UK}
\email{newstead@liv.ac.uk}

\thanks{Both authors are members of the research group VBAC (Vector Bundles on Algebraic Curves). The second author would like to thank the
Department Mathematik der Universit\"at
         Erlangen-N\"urnberg for its hospitality.}
\keywords{Algebraic curve, stable vector bundle, Clifford index.}
\subjclass[2000]{Primary: 14H60; Secondary: 14J28}
\maketitle
\begin{center}{\it Dedicated to the memory of Masaki Maruyama}
\end{center}
\begin{abstract}
Let $C$ be a smooth irreducible projective algebraic curve defined over the complex numbers. The notion of the Clifford index of $C$ was extended a few years ago to semistable bundles of any rank. Recent work has been focussed mainly on the rank-2 Clifford index, although interesting results have also been obtained for the case of rank 3. In this paper we extend this work, obtaining improved lower bounds for the rank-3 Clifford index. This allows the first computations of the rank-3 index in non-trivial cases and examples for which the rank-3 index is greater than the rank-2 index.
\end{abstract}

\section{Introduction} \label{intro}

Let $C$ be a smooth irreducible projective algebraic curve defined over the complex numbers. The idea of generalising the classical Clifford index $\Cl(C)$ to higher rank vector bundles was proposed some 20 years ago, but formal definitions and the development of a basic theory took place much more recently \cite{cliff}. Since then, there have been major developments, in particular the construction of curves for which the rank-2 Clifford index $\Cl_2(C)$ is strictly less than $\Cl(C)$  \cite{fo, fo2, cl,cl3,cl1}, thus producing counter-examples to a conjecture of Mercat \cite{m}. A good deal is now known about bundles computing $\Cl_2(C)$ \cite{cl3}.

Examples are also known for $g=9$ and $g\ge11$ for which the rank-3 Clifford index $\Cl_3(C)$ is strictly smaller than $\Cl(C)$ \cite{lmn,fo2} and lower bounds for $\Cl_3(C)$ had previously been established in \cite{cl2}.   However, with the exception of the case where $\Cl(C)\le2$ (when $\Cl_3(C)=\Cl(C)$ \cite[Proposition 3.5]{cliff}), no actual values of $\Cl_3(C)$ are known. In the present paper, we improve the lower bounds of \cite{cl2} in various circumstances. As a result, we are able to compute values of $\Cl_3(C)$ in some cases and to give examples for which $\Cl_3(C)>\Cl_2(C)$, thus answering in the affirmative Question 5.7 in \cite{lmn}.

Following definitions and some preliminary results in Section \ref{prelim}, we consider in Section \ref{minimal} the curves of minimal rank-2 Clifford index constructed in \cite{cl1}; these are good candidates for having $\Cl_3(C)>\Cl_2(C)$ and we prove in particular

\medskip
\noindent{\bf  Theorem \ref{thm2.9}.}\begin{em}
If $16 \leq g \leq 24$, then there exists a curve $C$ of genus $g$ such that
$$
\Cl_3(C) > \Cl_2(C).
$$\end{em}This could hold also for other values of $g$ (see Theorem \ref{thm2.5} and Remark \ref{rem2.10}). 

In Section \ref{improved}, we establish the following improved lower bound for $\Cl_3(C)$ when $\Cl_2(C)=\Cl(C)$.

\medskip
\noindent{\bf Theorem \ref{thm3.1}}\begin{em}
Let $C$ be a curve of genus $g \geq 7$ such that $\Cl_2(C) = \Cl(C) \geq 2$. Then 
$$
\Cl_3(C)  \geq \min \left\{ \frac{d_9}{3} -2, \frac{2\Cl(C) +2}{3} \right\}.
$$Moreover, if $\Cl_3(C)<\Cl(C)$, then any bundle computing $\Cl_3(C)$ is stable.\end{em}

\noindent(For the definition of the gonalities $d_r$, see Section \ref{prelim}.) These new bounds may appear to be a minor improvement on those of \cite{cl2}, but they are in some sense best possible in the light of current knowledge and have surprisingly strong consequences. In particular, in the course of proving Theorem \ref{thm3.1}, we are able to show that $\Cl_3(C)=\frac{10}3$ for the general curve of genus $9$ (Proposition \ref{prop3.8} and Corollary \ref{corg=9}); to our knowledge, this is the first complete computation of $\Cl_3(C)$ for any curve with $\Cl(C)>2$.

Section \ref{plane} is concerned with the case of plane curves, especially smooth plane curves. We note first that, if $C$ is a smooth plane curve of degree $\delta\ge6$, Theorem \ref{thm3.1} implies that $\Cl_3(C)\ge\frac{2\delta-6}3$ (Proposition \ref{prop4.1}). The main result of this section identifies all possible bundles for which this lower bound could be attained.

\medskip
\noindent{\bf Theorem \ref{thm5.6}}\begin{em}
If $C$ is a smooth plane curve of degree $\delta \geq 7$ and $\Cl_3(C) = \frac{2 \delta -6}{3}$,
then any bundle $E$ computing $\Cl_3(C)$ is stable and fits into an exact sequence 
\begin{equation*}
0 \ra E_H \ra E \ra H \ra 0
\end{equation*}
and all sections of $H$ lift to $E$. Moreover, such extensions exist if and only if $h^0(E_H \otimes E_H) \geq 10$.\end{em}

\noindent(Here $H$ denotes the hyperplane bundle on $C$ and $E_H$ is defined by the evaluation sequence $0\to E_H^*\to H^0(E)\otimes{\mathcal O}_C\to E\to 0$.) For the normalisation of a nodal plane curve, we prove a similar but more complicated result (Theorem \ref{thmnodal}).

In Section \ref{three} we study curves with $\Cl_3(C)=3$. Our main result here is 

\medskip
\noindent{\bf Theorem \ref{thm6.8}}\begin{em} Let $C$ be a curve of genus $g \geq 9$ with $\Cl(C) =3$. If $d_2 > 7$, and in particular if $g \geq 16$, then
$$
\Cl_3(C) = 3.
$$
For all $g \geq 9$ there exist curves with these properties.\end{em}

\medskip
\noindent For curves with $\Cl_3(C)=3$ and $d_2=7$ (which can exist only for $7\le g\le15$) or with $g=8$ and $d_2=8$, we have $\frac83\le \Cl_3\le3$, but we do not know the precise value of $\Cl_3(C)$. We do however give a list of all bundles which could compute $\Cl_3(C)$ if $\Cl_3(C)=\frac83$ (Propositions \ref{prop5.5}, \ref{prop5.6} and \ref{prop5.7}). The problem is therefore reduced to that of determining whether any of these bundles exists.

In Section \ref{cs}, we prove that, if $\Cl_3(C)\le\Cl_2(C)$ and $E$ computes $\Cl_3(C)$, then the coherent system $(E,H^0(E))$ is $\alpha$-semistable for all $\alpha>0$; if in addition $E$ is stable, then $(E,H^0(E))$ is $\alpha$-stable for all $\alpha>0$. (In fact we prove a result for rank $n$ (Proposition \ref{prop7.1}) of which this is the case $n=3$.) These results are of interest in connection with a conjecture of D. C. Butler. 

Finally, Section \ref{further} contains further comments and a discussion of open problems.

We suppose throughout that $C$ is a smooth irreducible projective algebraic curve defined over $\CC$ and denote the canonical bundle on $C$ by $K_C$. For a vector bundle $F$ on $C$, we denote the degree of $F$ by $d_F$ and its slope by $\mu(F):=\frac{d_F}{\rk F}$.

\section{Definitions and preliminaries}\label{prelim}

\medskip
We recall first the definition of $\Cl_n(C)$. For any vector bundle $E$ of rank $n$ and degree $d$ on $C$, we define
$$
\gamma(E) := \frac{1}{n} \left(d - 2(h^0(E) -n)\right) = \mu(E) -2\frac{h^0(E)}{n} + 2.
$$
If $C$ has genus $g \geq 4$, we then define, for any positive integer $n$,
$$
\Cl_n(C):= \min_{E} \left\{ \gamma(E) \;\left| 
\begin{array}{c} E \;\mbox{semistable of rank}\; n, \\
h^0(E) \geq 2n,\; \mu(E) \leq g-1
\end{array} \right. \right\}
$$
(this invariant is denoted in \cite{cliff, cl2, cl4, cl, cl3} by $\gamma_n'$). Note that $\Cl_1(C) = \Cl(C)$ is the usual Clifford index of the curve $C$. 
We say that $E$ {\it contributes to} $\Cl_n(C)$ if $E$ is semistable of rank $n$ with $h^0(E) \geq 2n$ and $\mu(E) \leq g-1$. If in addition $\gamma(E) = \Cl_n(C)$, we say that $E$ {\it computes} $\Cl_n(C)$. 
Moreover, as observed in \cite[Proposition 3.3 and Conjecture 9.3]{cliff}, the conjecture of \cite{m} can be restated in a slightly weaker form as

\medskip
\noindent{\bf Conjecture.} $\Cl_n(C)=\Cl(C)$. 

\medskip
In fact, for $n=2$, this form of the conjecture is equivalent to the original (see \cite[Proposition 2.7]{cl3}).

\begin{lem} \label{l2.1}
The conjecture is valid in the following cases
\begin{enumerate}
\item[(i)] $\Cl(C) \leq 2$,
\item[(ii)]$ n= 2 \; \mbox{and} \; \Cl(C) \leq 4$.
\end{enumerate}
\end{lem}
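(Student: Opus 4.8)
The plan is to split the conjecture $\Cl_n(C)=\Cl(C)$ into the two inequalities and to observe that one of them is free. For every $n$ the bound $\Cl_n(C)\le\Cl(C)$ holds unconditionally: if $L$ is a line bundle computing $\Cl(C)$, normalised (via $L\leftrightarrow K_C\otimes L^{-1}$) so that $\deg L\le g-1$, then the polystable bundle $L^{\oplus n}$ has rank $n$, slope $\mu(L^{\oplus n})=\deg L\le g-1$ and $h^0(L^{\oplus n})=n\,h^0(L)\ge 2n$, so it contributes to $\Cl_n(C)$; moreover $\gamma(L^{\oplus n})=\deg L-2(h^0(L)-1)=\Cl(C)$. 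Thus the entire content of the lemma is the reverse inequality $\gamma(E)\ge\Cl(C)$ for every semistable $E$ contributing to $\Cl_n(C)$, in each of the two stated ranges.

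First I would dispose of the boundary case $\Cl(C)=0$ (the hyperelliptic case) for arbitrary $n$: such an $E$ is semistable with $h^0(E)\ge 2n>0$, so $\mathcal O_C$ embeds as a subsheaf and semistability forces $0\le\mu(E)\le g-1<2(g-1)$; Clifford's theorem for semistable bundles then gives $h^0(E)\le\frac{d_E}2+n$, i.e. $\gamma(E)\ge 0=\Cl(C)$. For the remaining values (namely $\Cl(C)=1,2$ in (i), and $\Cl(C)\le 4$ with $n=2$ in (ii)) the strategy is to suppose $\gamma(E)<\Cl(C)$ and to extract from $E$ a line bundle $A$ contradicting the value of the classical index. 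Concretely one examines either a quotient line bundle of $E$ of least degree or the saturated sub-line-bundle spanned by a well-chosen pencil of sections; semistability controls the degree (a quotient line bundle has degree $\ge\mu(E)$, a sub-line-bundle degree $\le\mu(E)$), and the aim is to arrange $h^0(A)\ge 2$ and $\deg A\le g-1$ (whence $h^1(A)\ge 2$ automatically by Riemann--Roch) with $\gamma(A)\le\gamma(E)<\Cl(C)$, contradicting the definition of $\Cl(C)$.

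The main obstacle is exactly the section count of the extracted line bundle: the sections of $E$ need not surject onto those of a quotient, nor need a sub-line-bundle of controlled degree carry enough sections, so the naive degree bookkeeping does not on its own yield a genuine contributor to $\Cl(C)$. Surmounting this requires the finer input that is available only because the relevant indices are small. For (i) the smallness of $\Cl(C)$ imposes strong constraints on the gonality and special geometry of $C$, and these force the extracted line bundle to carry the required sections; this is carried out for all $n$ in \cite{cliff} (the rank-$3$ instance being \cite[Proposition 3.5]{cliff}). For (ii) one invokes the rank-$2$ structure theory: by \cite[Proposition 2.7]{cl3} the weak form of the conjecture used here is equivalent to Mercat's original rank-$2$ condition, and the latter is known to hold for $\Cl(C)\le 4$ through the analysis of \cite{cliff,cl3}. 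I would therefore present (i) and (ii) by recording the elementary reduction and the hyperelliptic base case above and then reducing the remaining cases to these established results.
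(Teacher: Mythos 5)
Your proposal is correct and essentially matches the paper, whose entire proof of this lemma is the bare citation ``See \cite[Propositions 3.5 and 3.8]{cliff}'': like the paper, you defer all the substantive content to \cite{cliff}, and the elementary scaffolding you add (the unconditional inequality $\Cl_n(C)\le\Cl(C)$ via $L^{\oplus n}$, and the hyperelliptic case $\Cl(C)=0$ via Clifford's theorem for semistable bundles) is correct but already part of that package. The only inefficiency is in (ii), where instead of routing through \cite[Proposition 2.7]{cl3} and Mercat's original conjecture you can quote \cite[Proposition 3.8]{cliff} directly: the bound $\Cl_2(C)\ge\min\left\{\Cl(C),\frac{\Cl(C)}{2}+2\right\}$ (inequality (2.2) of the paper) immediately yields $\Cl_2(C)=\Cl(C)$ when $\Cl(C)\le4$.
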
   

\begin{proof}
See \cite[Propositions 3.5 and 3.8]{cliff}.
\end{proof}

However, the conjecture is known to fail in many other cases (see  \cite{fo, fo2, cl,cl3,cl1}).
For $n = 3$ it fails for the general curve of genus 9 or 11 (see \cite{lmn}) and for curves of genus $\ge12$ contained in K3 surfaces \cite[Corollary 1.6]{fo2}. For $n=2$ it is still conjectured to hold for the general curve of any genus (see \cite{fo}).
Note that in any case
\begin{equation} \label{e2.1}
\Cl_n(C) \leq \Cl(C)
\end{equation}
(see \cite[Lemma 2.2]{cliff}) and for $n = 2$ we have the lower bound
\begin{equation} \label{e2.2}
\Cl_2(C) \geq \min \left\{ \Cl(C), \frac{\Cl(C)}{2} + 2 \right\}
\end{equation}
(see \cite[Proposition 3.8]{cliff}).

The {\em gonality sequence} 
$d_1,d_2,\ldots,d_r,\ldots$ of $C$ is defined by 
$$
d_r := \min \{ d_L \;|\; L \; \mbox{a line bundle on} \; C \; \mbox{with} \; h^0(L) \geq r +1\}.
$$
We have always $d_r<d_{r+1}$ and $d_{r+s}\le d_r+d_s$; in particular $d_n\le nd_1$ for all $n$ (see \cite[Section 4]{cliff}). 
We say that $d_r$ {\em computes} $\Cl(C)$ if $d_r \leq g-1$ and $d_r-2r=\Cl(C)$ and that $C$ has {\em Clifford dimension} $r$ if $r$ is 
the smallest integer for which $d_r$ computes $\Cl(C)$. Note also \cite[Lemma 4.6]{cliff}
\begin{equation}\label{dr}
d_r\ge\min\{\Cl(C)+2r,g+r-1\}.
\end{equation}

We recall that $\Cl(C)\le\left[\frac{g-1}2\right]$ with equality on the general curve of genus $g$. In fact equality holds on any \emph{Petri curve}, that is any curve for which the multiplication map
$$H^0(L)\otimes H^0(K_C\otimes L^*)\to H^0(K_C)$$
is injective for every line bundle $L$ on $C$. Moreover
\begin{equation}\label{dr2}
d_r\le g+r-\left[\frac{g}{r+1}\right],
\end{equation}
again with equality on any Petri curve.

In the following sections, we shall need a few basic results. The first is the lemma of Paranjape and Ramanan \cite[Lemma 3.9]{pr}, which can be stated as follows.
\begin{lem}\label{pr}
Let $E$ be a bundle of rank $n$ and degree $d$ on $C$ with $h^0(E)=n+s$ possessing no proper subbundle $F$ with $h^0(F)>\rk F$. Then $d\ge d_{ns}$.
\end{lem}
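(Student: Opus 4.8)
The plan is to reduce everything to a statement about the determinant line bundle $\det E$ and then to apply a dimension count on a Grassmannian. Concretely, I would first prove
\[
h^0(\det E)\ge ns+1 .
\]
Granting this, $\det E$ is a line bundle of degree $d$ with at least $ns+1$ sections, so by the very definition of the gonality sequence, $d_{ns}=\min\{d_L\mid h^0(L)\ge ns+1\}$, we get $d=d_{\det E}\ge d_{ns}$, which is the assertion. Thus the entire content is the inequality $h^0(\det E)\ge ns+1$.

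To produce sections of $\det E$ I would use the wedge map
\[
\mu\colon \Lambda^n H^0(E)\lra H^0(\det E),\qquad v_1\wedge\cdots\wedge v_n\mapsto v_1\wedge\cdots\wedge v_n,
\]
the target wedge being the section of $\Lambda^nE=\det E$ whose value at $p\in C$ is $v_1(p)\wedge\cdots\wedge v_n(p)$. The key observation is that $\mu$ does not annihilate the wedge of any $n$ linearly independent sections. Indeed, if $v_1,\dots,v_n\in H^0(E)$ are linearly independent but $\mu(v_1\wedge\cdots\wedge v_n)=0$, then $v_1(p),\dots,v_n(p)$ are dependent for every $p$, so the subsheaf they generate has generic rank $<n$; its saturation $F$ is then a proper subbundle with $v_1,\dots,v_n\in H^0(F)$, whence $h^0(F)\ge n>\rk F$, contradicting the hypothesis on $E$. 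Since the wedge of linearly dependent sections is automatically zero, this says precisely that the linear subspace $\ker\mu\subset\Lambda^nH^0(E)$ meets the cone of decomposable vectors only at the origin; equivalently, $\PP(\ker\mu)$ is disjoint from the Grassmannian $\operatorname{Gr}(n,H^0(E))$ in its Pl\"ucker embedding inside $\PP(\Lambda^nH^0(E))$.

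The final step is a dimension estimate. Write $N=\binom{n+s}{n}-1=\dim\PP(\Lambda^nH^0(E))$; the Grassmannian $\operatorname{Gr}(n,H^0(E))$ has dimension $ns$. By the projective dimension theorem, a linear subspace disjoint from a nonempty projective variety $X\subset\PP^N$ has dimension at most $N-\dim X-1$. Applying this to $\PP(\ker\mu)$ gives $\dim\PP(\ker\mu)\le N-ns-1$, that is $\dim\ker\mu\le\binom{n+s}{n}-ns-1$, and therefore
\[
\rk\mu=\binom{n+s}{n}-\dim\ker\mu\ge ns+1 .
\]
Since $\rk\mu\le h^0(\det E)$, this yields the required bound and completes the argument.

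The main obstacle is the middle step: one must be certain that independence of the sections genuinely forces the wedge to be a \emph{nonzero} global section, and this is exactly where the hypothesis ``no proper subbundle $F$ with $h^0(F)>\rk F$'' enters, through the saturation argument. Everything else is either a definition (the reduction to $d_{ns}$) or a standard intersection-dimension fact relating linear subspaces to the Grassmannian; the degenerate cases are consistent with the count (for instance, when $s=0$ the Grassmannian $\operatorname{Gr}(n,n)$ is a point and $\mu$ is simply nonzero, giving $h^0(\det E)\ge1=d_0$-type information).
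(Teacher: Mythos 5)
Your proposal is correct and coincides in essence with the proof of the result the paper cites: the paper gives no argument of its own for Lemma \ref{pr}, referring instead to \cite[Lemma 3.9]{pr}, and the Paranjape--Ramanan argument is exactly yours --- the wedge map $\Lambda^n H^0(E)\to H^0(\det E)$ kills no decomposable vector coming from independent sections (else the saturation of the generated subsheaf violates the hypothesis), so $\PP(\ker\mu)$ misses the Pl\"ucker-embedded Grassmannian, and the projective dimension theorem gives $h^0(\det E)\ge ns+1$, hence $d\ge d_{ns}$.
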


As a complement to this lemma in the case $n=2$, we have (see \cite[Lemma 2.6]{cl3})
\begin{lem}\label{lempr2}
Suppose that $F$ is a semistable bundle of rank $2$ and degree $\le2g-2$ which possesses a subbundle $M$ with $h^0(M)\ge2$. Then $\gamma(F)\ge \Cl(C)$, with equality if and only if $\gamma(M)=\gamma(F/M)=\Cl(C)$ and all sections of $F/M$ lift to $F$.
\end{lem}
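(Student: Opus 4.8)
The plan is to work with the exact sequence
\begin{equation*}
0 \ra M \ra F \ra N \ra 0
\end{equation*}
determined by the given subbundle, where $N := F/M$ is the quotient line bundle, and to reduce everything to the classical rank-one Clifford bound applied to $M$ and to $N$. The computational backbone is the additivity identity
\begin{equation*}
2\gamma(F) = \gamma(M) + \gamma(N) + 2\delta, \qquad \delta := h^0(M) + h^0(N) - h^0(F) \geq 0,
\end{equation*}
which follows at once from $d_F = d_M + d_N$ and the left-exactness of $H^0$; here $\delta$ is exactly the number of independent sections of $N$ that fail to lift to $F$, so $\delta = 0$ precisely when all sections of $F/M$ lift. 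Note this identity already gives the easy half of the equivalence: if $\gamma(M) = \gamma(N) = \Cl(C)$ and $\delta = 0$, then $\gamma(F) = \Cl(C)$ immediately.

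First I would extract what semistability provides. Since $\mu(F) = d_F/2 \leq g-1$, any sub-line-bundle has $d_M \leq g-1$; combined with $h^0(M) \geq 2$ this says $M$ contributes to $\Cl(C)$, so $\gamma(M) \geq \Cl(C)$. Feeding $h^0(M) \geq 2$ back into the definition of $\gamma(M)$ gives $d_M \geq \gamma(M) + 2 \geq \Cl(C) + 2$, and semistability forces $d_M \leq d_N$, hence
\begin{equation*}
\Cl(C) + 2 \leq d_M \leq d_N = d_F - d_M \leq 2g - 4 - \Cl(C).
\end{equation*}
The whole argument then turns on bounding $\gamma(N)$, and I would split into cases according to whether $N$, or its Serre dual $K_C \otimes N^*$ (for which $\gamma(K_C\otimes N^*)=\gamma(N)$ by Riemann--Roch and Serre duality), contributes to $\Cl(C)$.

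In the good cases --- namely $h^0(N) \geq 2$ with either $d_N \leq g-1$ (so $N$ contributes) or $h^1(N) \geq 2$ (so $K_C \otimes N^*$ has degree $\leq g-2$ and contributes) --- I obtain $\gamma(N) \geq \Cl(C)$ directly, whence $\gamma(F) \geq \Cl(C) + \delta \geq \Cl(C)$, with equality forcing $\gamma(M) = \gamma(N) = \Cl(C)$ and $\delta = 0$, exactly the asserted condition. The remaining, genuinely delicate, cases are those in which $N$ is \emph{not} controlled by the Clifford index: either $h^0(N) \leq 1$, or $h^0(N) \geq 2$ with $d_N \geq g$ and $h^1(N) \leq 1$. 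This is where I expect the main obstacle to lie, since the rank-one bound for $N$ is unavailable. The way through is the degree inequality above: in the first situation $\gamma(N) = d_N - 2h^0(N) + 2 \geq d_N \geq \Cl(C) + 2$, while in the second Riemann--Roch gives $\gamma(N) = 2g - d_N - 2h^1(N) \geq 2g - (2g-4-\Cl(C)) - 2 = \Cl(C) + 2$. Either way $\gamma(N) \geq \Cl(C) + 2$, so the identity yields $\gamma(F) \geq \Cl(C) + 1 + \delta > \Cl(C)$, strictly; in particular equality never occurs in these cases. Combining the two analyses proves $\gamma(F) \geq \Cl(C)$ and pins the equality case down to $\gamma(M) = \gamma(F/M) = \Cl(C)$ together with the lifting of all sections of $F/M$.
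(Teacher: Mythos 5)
Your proof is correct: the additivity identity $2\gamma(F)=\gamma(M)+\gamma(F/M)+2\delta$, the observation that semistability makes $M$ contribute to $\Cl(C)$ and forces $\Cl(C)+2\le d_M\le d_{F/M}\le 2g-4-\Cl(C)$, and the four-way case analysis on $F/M$ (contributing, Serre-dual contributing, $h^0\le 1$, or $h^1\le 1$ with large degree) together give the bound and pin down the equality case exactly as stated. The paper itself offers no proof, deferring to \cite[Lemma 2.6]{cl3}, and the argument there runs along essentially the same lines as yours, so this is the intended route rather than a genuinely different one.
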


\begin{prop}\label{newprop}
Suppose that either $\Cl_3(C)<\Cl_2(C)=\Cl(C)$ or $\Cl_3(C)\le\Cl_2(C)<\Cl(C)$ and let $E$ be a bundle computing $\Cl_3(C)$. Then $E$ is stable.
\end{prop}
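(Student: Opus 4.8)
The plan is to argue by contradiction: suppose $E$ computes $\Cl_3(C)$ but is not stable. Since $E$ is semistable by the definition of contributing to $\Cl_3(C)$, strict semistability means $E$ has a proper subbundle $F$ with $\mu(F)=\mu(E)$. I would work with a Jordan--Hölder filtration of $E$, so that the stable subquotients $F_i$ all satisfy $\mu(F_i)=\mu(E)$ and are themselves semistable. The rank-$3$ case forces the possible ranks of a destabilising subbundle to be $1$ or $2$, and the quotient will have complementary rank $2$ or $1$; I would organise the proof around these two configurations, namely an exact sequence $0\to F\to E\to G\to 0$ with $(\rk F,\rk G)$ equal to $(1,2)$ or $(2,1)$, where $F$ and $G$ are semistable of the same slope as $E$.

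The key computational input is the additivity of $\gamma$ under such extensions of equal-slope pieces. First I would record that for an exact sequence $0\to F\to E\to G\to 0$ of semistable bundles all of the same slope $\mu(E)$, one has $h^0(E)\le h^0(F)+h^0(G)$, and hence $\gamma(E)\ge\frac{1}{n}\bigl(\rk F\cdot\gamma(F)+\rk G\cdot\gamma(G)\bigr)$, so that $\gamma(E)$ is at least a convex combination of $\gamma(F)$ and $\gamma(G)$ with weights $\rk F/3$ and $\rk G/3$. The point is then that the subquotients $F$ and $G$ automatically satisfy $\mu\le\mu(E)\le g-1$, so as soon as they have enough sections they \emph{contribute} to the Clifford index in their own rank. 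Concretely, whichever of $F,G$ has rank $1$ either has $h^0\le 1$ (in which case its $\gamma$ is large and forces $\gamma(E)>\Cl_3(C)$ via the convexity, after checking the other piece cannot compensate) or has $h^0\ge2$ and contributes to $\Cl(C)=\Cl_1(C)$; similarly the rank-$2$ piece either has too few sections or contributes to $\Cl_2(C)$. Feeding the bounds $\gamma(F)\ge\Cl_1(C)$ and $\gamma(G)\ge\Cl_2(C)$ (or vice versa) into the convex-combination inequality gives $\gamma(E)\ge$ a weighted average of $\Cl_1(C)$ and $\Cl_2(C)$, both of which are $\ge\Cl_2(C)$ in the first hypothesis and $\ge$ the relevant bound in the second; in either of the two stated hypotheses this average strictly exceeds $\Cl_3(C)$, contradicting the assumption that $E$ computes $\Cl_3(C)$.

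The main obstacle will be handling the low-$h^0$ cases cleanly, that is when one of the subquotients fails the hypothesis $h^0\ge2\rk$ needed to \emph{contribute} to its Clifford index, so that I cannot directly invoke $\gamma\ge\Cl_r(C)$. In those cases I expect to need a separate elementary estimate bounding $\gamma$ of a semistable bundle with few sections from below directly in terms of its slope (using $\mu(E)\le g-1$ and semistability), and to verify that the total $h^0(E)\ge 6$ constraint on $E$ cannot be met without one of the pieces being section-rich enough to trigger the contributing condition. A secondary subtlety is the bookkeeping distinguishing the two hypotheses: in the regime $\Cl_3(C)<\Cl_2(C)=\Cl(C)$ the weighted average of $\Cl_1$ and $\Cl_2$ collapses to $\Cl_2(C)>\Cl_3(C)$ immediately, whereas in the regime $\Cl_3(C)\le\Cl_2(C)<\Cl(C)$ one must check that the strict inequality in the conclusion still emerges, using the strict gap coming from $\Cl_2(C)<\Cl(C)$ to rule out equality. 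I would treat each of the two rank configurations against each of the two hypotheses, but expect the low-section edge cases to be the only place requiring genuine care rather than routine substitution.
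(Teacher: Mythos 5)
Your proposal is correct and follows essentially the same route as the paper: the paper passes to the associated graded $F\oplus L$ ($\rk F=2$, $\rk L=1$) of a strictly semistable $E$, uses $\gamma(E)\ge\gamma(F\oplus L)=\frac{2\gamma(F)+\gamma(L)}{3}$, observes that $h^0(E)\ge 6$ forces at least one of $F$, $L$ to contribute to $\Cl_2(C)$ resp.\ $\Cl(C)$, and disposes of the section-poor cases exactly as you anticipate, via the slope bounds $\gamma(F)\ge\mu(F)-1$ when $h^0(F)\le3$ and $\gamma(L)\ge d_L$ when $h^0(L)\le1$. Your convex-combination inequality and case analysis match the paper's proof in substance, with only the cosmetic difference of working with a two-step filtration rather than S-equivalence to a direct sum.
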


\begin{proof} Suppose that $E$ is strictly semistable. Then $E$ is S-equivalent to a bundle of the form $F\oplus L$, where $\rk F=2$, $\rk L=1$ and both bundles have the same slope as $E$. Moreover $\gamma(E)\ge\gamma(F\oplus L)$. 

Note that either $F$ contributes to $\Cl_2(C)$ or $L$ contributes to $\Cl(C)$. If both of these hold, then clearly $\gamma(F\oplus L)\ge\frac{2\Cl_2(C)+\Cl(C)}3$. If $F$ does not contribute to $\Cl_2(C)$, then $h^0(F)\le3$, so 
$$\gamma(F)\ge \mu(F)-1=d_L-1>\gamma(L).$$ 
Since $\gamma(L)\ge\Cl(C)$, it follows that $\gamma(F\oplus L)>\Cl(C)$. Finally, suppose $L$ does not contribute to $\Cl(C)$. Then $$\gamma(L)\ge d_L=\mu(F)>\gamma(F)\ge\Cl_2(C),$$ so $\gamma(F\oplus L)>\Cl_2(C)$. In all cases, we obtain the contradiction $\gamma(E)>\Cl_3(C)$.
\end{proof}

For the next result recall that, if $L$ is a generated line bundle with $h^0(L) = 1 +u$, then the evaluation sequence
\begin{equation} \label{e2.5}
0 \ra E_L^* \ra H^0(L) \otimes \cO_C \ra L \ra 0
\end{equation} 
defines a vector bundle $E_L$ of rank $u$ and degree $d_L$.

\begin{lem} \label{l2.4}
If $u =2$ and $d_L = d_2$ in \eqref{e2.5}, then $E_L$ is semistable. Moreover, if $d_2 < 2d_1$, then $E_L$ is stable and $h^0(E_L) = 3$.
\end{lem}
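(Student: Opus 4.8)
The plan is to dualise the evaluation sequence \eqref{e2.5} and read off (semi)stability from the geometry of quotient line bundles. Writing $V:=H^0(L)$, so $\dim V=3$, and dualising gives
$$
0\ra L^*\ra V^*\otimes\cO_C\ra E_L\ra 0 .
$$
Since $d_L=d_2>0$ we have $h^0(L^*)=0$, so on the one hand the map $V^*\to H^0(E_L)$ is injective and $h^0(E_L)\ge 3$, and on the other hand $E_L$ is globally generated by the image of $V^*$. Taking cohomology in \eqref{e2.5} itself, the map $V=H^0(V\otimes\cO_C)\to H^0(L)$ is the identity, so $h^0(E_L^*)=0$; this vanishing will be the crucial input. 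I would also record that $L$ is base-point free, since a base point would yield a line bundle of degree $d_2-1$ with three sections, contradicting the minimality of $d_2$.

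To prove semistability I would bound the degree of every quotient line bundle. Given a line-bundle quotient $E_L\ra N$, the composite $V^*\otimes\cO_C\ra E_L\ra N$ is surjective, so $N$ is generated by the sections in the image of $V^*\to H^0(N)$; let $r\in\{1,2,3\}$ be the rank of this map, so that $N$ carries a base-point-free linear system of dimension $r$. If $r=3$ then $h^0(N)\ge 3$ and $d_N\ge d_2$; if $r=2$ then $h^0(N)\ge 2$ and $d_N\ge d_1\ge d_2/2$, using $d_2\le 2d_1$; and if $r=1$ then $N\cong\cO_C$, producing a nonzero section of $E_L^*$, which is impossible. Hence $d_N\ge d_2/2=\mu(E_L)$ for every quotient line bundle and $E_L$ is semistable. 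Under the stronger hypothesis $d_2<2d_1$ the bounds become strict ($d_N\ge d_1>d_2/2$ when $r=2$), so no quotient line bundle has slope equal to $\mu(E_L)$ and $E_L$ is stable.

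For the final assertion $h^0(E_L)=3$ when $d_2<2d_1$, I would apply Lemma \ref{pr} with $n=2$. Write $h^0(E_L)=2+s$, so $s\ge 1$. By stability every line subbundle $M\subset E_L$ has $d_M<\mu(E_L)=d_2/2<d_1$, hence $h^0(M)\le 1$; thus $E_L$ has no proper subbundle $F$ with $h^0(F)>\rk F$, and Lemma \ref{pr} gives $d_2=d_{E_L}\ge d_{2s}$. Since the gonality sequence is strictly increasing, $s\ge 2$ would force $d_2\ge d_4>d_2$, a contradiction; therefore $s=1$ and $h^0(E_L)=3$.

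The one delicate step is the case $r=1$, where no slope inequality is available; this is exactly where the vanishing $h^0(E_L^*)=0$ is needed to exclude $\cO_C$ as a quotient. Apart from that, the argument is a systematic translation of slope inequalities for quotient line bundles into the elementary properties $d_1\le d_2$, $d_2\le 2d_1$ and the strict monotonicity $d_r<d_{r+1}$ of the gonality sequence.
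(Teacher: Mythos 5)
Your proof is correct, but it is not the paper's proof: the paper disposes of Lemma \ref{l2.4} with a bare citation to \cite[Proposition 4.9 and Theorem 4.15]{cliff}, whereas you give a complete argument from the facts already assembled in Section \ref{prelim}. Every step checks out. Dualising \eqref{e2.5} gives $0 \ra L^* \ra H^0(L)^*\otimes\cO_C \ra E_L \ra 0$, so $E_L$ is generated with $h^0(E_L)\ge 3$, and the cohomology of \eqref{e2.5} itself gives $h^0(E_L^*)=0$. Since (semi)stability of a rank-$2$ bundle is detected on its line-bundle quotients $N$, your trichotomy on the rank $r$ of the induced map $H^0(L)^*\to H^0(N)$ settles it: $r=3$ forces $d_N\ge d_2$, $r=2$ forces $d_N\ge d_1\ge d_2/2$ (with strict inequality $d_1>d_2/2$ exactly when $d_2<2d_1$), and $r=1$ would make $\cO_C$ a quotient of $E_L$, contradicting $h^0(E_L^*)=0$. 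Finally, stability together with $\mu(E_L)=d_2/2<d_1$ excludes line subbundles with $h^0\ge 2$, so Lemma \ref{pr} applies with $n=2$, and strict monotonicity of the gonality sequence kills $s\ge2$ via $d_2\ge d_{2s}\ge d_4>d_2$, giving $h^0(E_L)=3$. In substance this is the standard dual-span argument (quotient line bundles, gonality bounds, Paranjape--Ramanan) that underlies the results cited from \cite{cliff}, so the two routes are close in spirit; what your version buys is that the lemma becomes self-contained within this paper, resting only on Lemma \ref{pr} and the elementary inequalities $d_r<d_{r+1}$ and $d_2\le 2d_1$ quoted in Section \ref{prelim}, whereas the paper's citation is more economical but sends the reader to an external source.
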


\begin{proof}
See \cite[Proposition 4.9 and Theorem 4.15]{cliff}.
\end{proof}

\begin{prop} \label{p2.5}
Suppose that $3\Cl(C) \geq 2d_2 - 6$ and $\Cl_2(C) = \Cl(C)$. Let $F$ be a stable bundle of rank $2$ and degree $d_2$ with $h^0(F) =3$ and let $L$ be a line bundle of degree $d_2$ with $h^0(L) =3$. Suppose further that
\begin{equation} \label{eq2.3}
0 \ra F \ra E \ra L \ra 0
\end{equation}
is a non-trivial extension with $h^0(E) =6$.
Then $E$ is semistable and generated. Moreover, extensions  \eqref{eq2.3} with these properties exist if and only if $h^0(F \otimes E_L) \geq 10$.
\end{prop}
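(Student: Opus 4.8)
The plan is to prove the two assertions separately: the existence criterion is a direct cohomological computation, while semistability and generatedness require a study of a hypothetical destabilising subbundle. I begin with generatedness. First note that $L$ is generated, since otherwise $L(-p)$ would have $h^0=3$ and degree $<d_2$, contradicting the definition of $d_2$. Because $h^0(E)=6=h^0(F)+h^0(L)$, the sequence \eqref{eq2.3} is exact on global sections, so every section of $L$ lifts to $E$ and the image of $H^0(E)$ in $H^0(L)$ generates $L$. After checking that $F$ itself is generated (which I expect to follow from its stability together with $h^0(F)=3$ and $d_F=d_2$, a rank-$1$ subsheaf spanned by sections being ruled out by stability and the minimality of $d_2$), the subsheaf of $E$ spanned by $H^0(E)$ contains $F$ and surjects onto $L$, hence equals $E$.

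For the existence criterion I would identify which extension classes give $h^0(E)=6$. A class $e\in\Ext^1(L,F)=H^1(L^*\otimes F)$ does so precisely when every section of $L$ lifts, i.e. when the coboundary $H^0(L)\ra H^1(F)$, which is cup product with $e$, vanishes; equivalently $e$ lies in the kernel of $H^1(L^*\otimes F)\ra\Hom(H^0(L),H^1(F))$. Dualising the evaluation sequence \eqref{e2.5}, tensoring with $F$ and taking cohomology yields
\[
0\ra H^0(L^*\otimes F)\ra H^0(L)^*\otimes H^0(F)\ra H^0(E_L\otimes F)\stackrel{\partial}{\lra}H^1(L^*\otimes F)\lra H^0(L)^*\otimes H^1(F),
\]
whose last map is exactly this coboundary, so the admissible classes form $\operatorname{im}\partial$. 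Since $F$ is stable of slope $\frac{d_2}{2}<d_2=\mu(L)$, there is no nonzero map $L\ra F$, hence $h^0(L^*\otimes F)=0$ and the image of $H^0(L)^*\otimes H^0(F)$ has dimension $h^0(L)h^0(F)=9$. Therefore $\operatorname{im}\partial\neq0$ if and only if $h^0(E_L\otimes F)\geq10$, giving the stated criterion (a nonzero class in $\operatorname{im}\partial$ being exactly a non-trivial extension with $h^0(E)=6$).

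For semistability I would argue by contradiction. If $E$ is not semistable, its Harder--Narasimhan filtration has graded ranks $(1,2)$, $(2,1)$ or $(1,1,1)$. In the case $(1,2)$ there is a semistable destabilising quotient $Q$ of rank $2$ with line-bundle kernel $G$; then $\mu(G)=d_G>\mu(E)=\frac{2d_2}{3}>\frac{d_2}{2}$, so $G\not\subseteq F$ and $G$ injects into $L$, say $d_G=d_2-\delta$ with $0\le\delta<\frac{d_2}{3}$. Non-triviality of \eqref{eq2.3} excludes $\delta=0$, and $\delta\geq1$ forces $h^0(G)\leq2$ (else $d_2$ is not minimal), whence $h^0(Q)\geq4$. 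Thus $Q$ contributes to $\Cl_2(C)$ and $\gamma(Q)\geq\Cl_2(C)=\Cl(C)$; inserting $d_Q=d_2+\delta$ gives $\Cl(C)\leq\frac{d_2+\delta}{2}-2$, which together with $3\Cl(C)\geq 2d_2-6$ yields $\delta\geq\frac{d_2}{3}$, a contradiction. The case $(1,1,1)$ I would dispose of by applying the same degree estimate to the top sub-line-bundle and the bottom quotient line bundle simultaneously.

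The main obstacle is the case $(2,1)$, a semistable destabilising \emph{subbundle} $G$ of rank $2$ with line-bundle quotient $\Lambda=E/G$ of degree $<\frac{2d_2}{3}$. Here $h^0(\Lambda)\leq2$, so $h^0(G)\geq4$ and $G$ contributes to $\Cl_2(C)$; but now the numerics only give $d_G\in(\frac{4d_2}{3},\frac{3d_2}{2})$, and adding $\gamma(G)\geq\Cl(C)$ to the contribution of $\Lambda$ merely reproduces $\Cl(C)\leq\frac{2d_2-6}{3}$, forcing equality throughout rather than a contradiction. Thus the delicate point is to close this rigid boundary configuration, in which $\gamma(G)=\gamma(\Lambda)=\Cl(C)$. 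My plan is to first show that such a $G$ must surject onto $L$ (using these equalities), so that pushing \eqref{eq2.3} out along $G\cap F\subseteq F$ splits the induced extension $0\ra F/(G\cap F)\ra E/(G\cap F)\ra L\ra0$; then, combining this with the lifting condition $e\in\operatorname{im}\partial$ and the equality clause of Lemma \ref{lempr2} applied to $G$, I would derive that $e=0$, contradicting non-triviality. Turning the boundary rigidity into this contradiction is where I expect the real work to lie.
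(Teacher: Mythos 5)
Your treatment of the existence criterion is correct and essentially identical to the paper's: the identification of the admissible classes with the kernel of $H^1(L^*\otimes F)\ra H^0(L)^*\otimes H^1(F)$, the dualised evaluation sequence of $L$ tensored with $F$, and the vanishing $h^0(L^*\otimes F)=0$ from stability of $F$ all match, and your conclusion $h^0(E_L\otimes F)\geq 10$ is the paper's. But the central assertion of the proposition --- semistability of $E$ --- is not proved in your proposal, and you acknowledge this yourself: in the case of a rank-$2$ destabilising subbundle $G$ with line-bundle quotient $\Lambda$, your numerics only pin down the boundary configuration $\gamma(G)=\gamma(\Lambda)=\Cl(C)$, $d_G$ just above $\frac{4d_2}{3}$, and the plan for converting this rigidity into a contradiction is exactly what is missing (``where I expect the real work to lie''). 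That case is genuinely the crux, not a routine check: all the Clifford-index inequalities are consistent there, so no soft counting argument closes it. The paper avoids the Harder--Narasimhan analysis entirely. Having shown $F$ is generated and $h^0(F^*)=0$, it dualises the evaluation sequence of $F$ to exhibit $F\simeq E_M$ with $M\simeq\det F$ of degree $d_2$ and $h^0(M)=3$, and then invokes \cite[Proposition 3.5]{lmn}, which gives semistability for such extensions; its numerical hypothesis $3d_1\geq 2d_2$ follows from $3\Cl(C)\geq 2d_2-6$ since $\Cl(C)\leq d_1-2$. A self-contained proof along your lines would amount to reproving that cited proposition, precisely in your unresolved case.

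There is also a secondary gap in your generatedness argument: you only exclude the possibility that $H^0(F)$ spans a rank-$1$ subsheaf. The dangerous case is that $F$ is generically generated but not generated, so that the span of the sections is a proper rank-$2$ subsheaf $F'$ of degree $\leq d_2-1$ with $h^0(F')=3$; stability of $F$ makes $F'$ semistable, but to exclude it one needs to know that no semistable rank-$2$ bundle of degree $<d_2$ carries three independent sections, which is \cite[Proposition 4.12]{cliff} in the paper's proof and does not follow merely from the minimality of $d_1$ and $d_2$. By contrast, your rank-$(1,2)$ case is sound (the contradiction $\delta\geq\frac{d_2}{3}$ does follow from $\gamma(Q)\geq\Cl_2(C)=\Cl(C)$ and the hypothesis $3\Cl(C)\geq 2d_2-6$), and the $(1,1,1)$ case can indeed be disposed of by the degree estimates you indicate, using $3d_1\geq 2d_2$.
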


\begin{proof}
If $F$ is not generated, then it possesses a subsheaf $F'$ of rank 2 and degree $d_2-1$ such that $h^0(F') = 3$. 
Moreover, $F'$ is semistable. This contradicts \cite[Proposition 4.12]{cliff}. 

Since also $h^0(F^*) =0$, we have an exact sequence
$$
0 \ra F^* \ra H^0(F)^* \otimes \cO_C \ra M \ra 0
$$
where $M \simeq \det F$ has degree $d_2$ and $h^0(M) \geq 3$. Hence $h^0(M) = 3$ and $F \simeq E_M$. The semistability of $E$ now follows as in \cite[Proposition 3.5]{lmn} noting that the inequality $3d_1 \geq 2d_2$ is weaker than $3\Cl(C) \geq 2d_2 -6$. Moreover, $E$ is obviously generated.

For the last assertion note that a non-trivial extension \eqref{eq2.3} with $h^0(E) =6$ corresponds to a non-zero element of the kernel of the natural map
$$
H^1(L^* \otimes F) \ra \Hom(H^0(L), H^1(F)) = H^0(L)^* \otimes H^1(F).
$$
Now consider the sequence
$$
0 \ra L^* \otimes F \ra H^0(L)^* \otimes F \ra E_L \otimes F \ra 0.
$$
Since $F$ is stable, $H^0(L^* \otimes F) = 0$. So we have an exact sequence
$$
0 \ra H^0(L)^* \otimes H^0(F) \ra H^0(E_L \otimes F) \ra H^1(L^* \otimes F) \ra H^0(L)^* \otimes H^1(F).
$$
Hence there exists a non-trivial extension \eqref{eq2.3} with $h^0(E) = 6$ if and only if 
$h^0(E_L \otimes F) > h^0(L)\cdot h^0(F) = 9$.
\end{proof}

\section{Curves with minimal rank-2 Clifford index}\label{minimal}

In this section, we let $C$ be a curve of genus $g \geq 11$ with 
\begin{equation}\label{eq:new1}
\Cl(C) = \left[ \frac{g-1}{2} \right] \quad  \mbox{and} \quad 
\Cl_2(C) = \frac{1}{2}\left[ \frac{g-1}{2} \right] + 2.
\end{equation}
Such curves exist by \cite{cl1} and \cite{fo}. Note that \eqref{eq:new1} implies that $\Cl_2(C)<\Cl(C)$.  
By  \eqref{e2.2}, $\Cl_2(C)$ takes its minimum value for the given value of $\Cl(C)$, so these curves are good candidates for obtaining values of $\Cl_3(C)$ greater than $\Cl_2(C)$. A further implication of \eqref{eq:new1} is that $d_4\le\Cl(C)+8$ (see \cite[Theorem 5.2]{cliff}). On the other hand, $d_4\ge\Cl(C)+8$  for any curve of genus $\ge8$ by \eqref{dr}, so, for our curves, we have $d_4=\Cl(C)+8$. This implies that $C$ cannot be a Petri curve for $g\ge12$. On the other hand, it is known that, for $g=11$, $C$ can be Petri \cite[Theorem 1.5]{mlc}.

We follow the arguments of \cite{cl2}.
\begin{prop}  \label{prop2.1}
Let $E$ be a semistable bundle of degree $d$ computing $\Cl_3(C)$. If $g \geq 19$ and 
$d < 2g -2 + \frac{1}{2} \left[\frac{g-1}{2} \right]$, then either
$$
\gamma(E) > \Cl_2(C) \quad   \mbox{or} \quad \gamma(E) \geq \frac{d_9}{3} -2.
$$
Moreover, if $\gamma(E) \leq \Cl_2(C)$, then $E$ has no proper subbundle $F$ with $h^0(F) \geq \rk F +1$.
\end{prop}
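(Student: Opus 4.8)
\emph{Reduction to the final assertion.} The plan is to derive the whole proposition from its last sentence. Suppose we have shown that $\gamma(E)\le\Cl_2(C)$ forces $E$ to have no proper subbundle $F$ with $h^0(F)\ge\rk F+1$. Since $E$ contributes to $\Cl_3(C)$ we have $h^0(E)\ge 6$, so writing $s:=h^0(E)-3\ge 3$, Lemma~\ref{pr} gives $d\ge d_{3s}$; using $d_{r+1}\ge d_r+1$ we get $d_{3s}\ge d_9+3(s-3)$, whence
\[
\gamma(E)=\tfrac13(d-2s)\ge\tfrac13(d_9+s-9)\ge\tfrac13(d_9-6)=\frac{d_9}{3}-2 ,
\]
the last step using $s\ge3$. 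Thus $\gamma(E)\le\Cl_2(C)$ implies $\gamma(E)\ge\frac{d_9}{3}-2$, which is exactly the stated dichotomy. Everything therefore reduces to the final assertion, and I would prove it by contraposition: assuming $E$ has a proper subbundle $F$ with $h^0(F)\ge\rk F+1$, I will show $\gamma(E)>\Cl_2(C)$.

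\emph{Main tool and set-up.} For any short exact sequence $0\to F\to E\to Q\to 0$, left-exactness of $H^0$ gives $h^0(E)\le h^0(F)+h^0(Q)$, which together with $d=d_F+d_Q$ yields the basic inequality
\[
3\,\gamma(E)\ \ge\ \rk F\cdot\gamma(F)+\rk Q\cdot\gamma(Q).
\]
Two numerical inputs will be used throughout. First, $\Cl(C)\ge 9$, and by \eqref{dr} $d_1\ge\Cl(C)+2$, $d_2\ge\Cl(C)+4$, $d_4\ge\Cl(C)+8$. Secondly, the hypothesis $d<2g-2+\frac12\Cl(C)$ together with semistability of $E$ forces $\mu(E)<g-1$ and keeps the degrees of the relevant sub- and quotient-line-bundles below $g-1$: any line subbundle $M\subset E$ has $d_M\le\mu(E)<g-1$, and any line subbundle of the rank-$2$ quotient $E/M$ has degree $<g-1$ (its preimage in $E$ is a rank-$2$ subbundle of slope $\le\mu(E)$, and $d_M\ge d_1$). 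Hence line or rank-$2$ bundles with $h^0\ge 2$ or $h^0\ge 4$ respectively contribute to $\Cl(C)$ or $\Cl_2(C)$; note also $\gamma(E)=\Cl_3(C)\ge0$, so $\mu(E)\ge\gamma(E)+2\ge2$. I now choose $F$ of minimal rank among proper subbundles with $h^0(F)\ge\rk F+1$, so $\rk F\in\{1,2\}$, the case $\rk F=2$ carrying the extra information that $E$ has no line subbundle with $h^0\ge 2$.

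\emph{The two cases.} If $\rk F=1$, put $M=F$; then $\gamma(M)\ge\Cl(C)$ and I study $Q=E/M$ (rank $2$). When $h^0(Q)\le 3$, feeding $h^0(E)\le h^0(M)+h^0(Q)$ and $\gamma(M)\ge\Cl(C)$ into $\gamma(E)$ and using $d_Q\ge2\mu(E)$ and $\mu(E)\ge\gamma(E)+2$ forces $\gamma(E)\ge\Cl(C)+2$, contradicting $\gamma(E)\le\Cl(C)$. When $h^0(Q)\ge 4$: if $Q$ has no line subbundle with $h^0\ge2$ then Lemma~\ref{pr} gives $d_Q\ge d_4$ and hence $\gamma(Q)\ge\Cl_2(C)$; if $Q$ has a line subbundle $N$ with $h^0(N)\ge2$ then $\gamma(N)\ge\Cl(C)$ (as $d_N<g-1$) while the quotient $P=Q/N$ has $\gamma(P)\ge2$, so $\gamma(Q)\ge\frac12\Cl(C)+1$. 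Either way $\gamma(Q)>\frac14\Cl(C)+3$, so $3\gamma(E)\ge\Cl(C)+2\gamma(Q)>3\Cl_2(C)$. If $\rk F=2$, then $h^0(F)\ge3$ and, as $E$ has no line subbundle with $h^0\ge2$, Lemma~\ref{pr} gives $d_F\ge d_2$, so $2\gamma(F)\ge\Cl(C)+2$; writing $L=E/F$, the basic inequality reduces matters to showing $\gamma(L)>\frac12\Cl(C)+4$. I would prove this by cases on $h^0(L)$ and $h^1(L)$: for $h^0(L)\ge2$ either $L$ contributes to $\Cl(C)$ or it is nonspecial of controlled degree, while for $h^0(L)\le1$ one has $h^0(F)\ge5$, so $d_F$ is large by Lemma~\ref{pr} and the relation $\gamma(L)\ge\mu(E)\ge\gamma(E)+2$ closes the estimate.

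\emph{Main obstacle.} The difficulty is not the two exact-sequence inequalities, which are immediate, but the fact that the rank-$2$ sub- or quotient-bundle ($Q$ in the first case, $F$ in the second) need not be semistable and may carry too few sections to contribute to $\Cl_2(C)$ directly. Handling this compels one to split the rank-$2$ bundle into line bundles and bound the Clifford index of each factor, which in turn requires treating line bundles of degree exceeding $g-1$ via Serre duality. It is precisely here that the hypotheses $g\ge 19$ (equivalently $\Cl(C)\ge 9$) and $d<2g-2+\frac12\left[\frac{g-1}{2}\right]$ are consumed: they are exactly what keeps every degree below $g-1$ and makes the numerical comparisons (such as $\frac14\Cl(C)+3<\frac12\Cl(C)+1$ and $d_2\ge\Cl(C)+4$) come out the right way.
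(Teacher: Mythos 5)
Your proposal is correct in substance but takes a genuinely different route from the paper's. The paper's own proof is a citation-patch: it quotes the four-term lower bound of \cite[Proposition 2.4]{cl2}, observes that the term $\Cl_2(C)$ enters there only as a strict inequality, that the hypothesis on $d$ is exactly equivalent to $\frac13(2\Cl(C)+2g-d+4)>\Cl_2(C)$, that $g\ge23$ makes $\frac{2\Cl(C)+1}{3}>\Cl_2(C)$ automatic, and then for $19\le g\le 22$ improves the two places inside the proofs of \cite{cl2} where the weaker bound $\frac{2\Cl(C)+1}{3}$ arises (a $d_6$-estimate and \cite[formula (2.3)]{cl2}); the ``moreover'' clause is implicit in the structure of that cited proof, which your opening reduction via Lemma~\ref{pr} makes commendably explicit. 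You instead rebuild the whole case analysis from first principles: minimal-rank section-bearing subbundle, subadditivity of $\rk\cdot\gamma$ over exact sequences, Lemma~\ref{pr}, the gonality bound \eqref{dr}, and Serre duality, with $\Cl(C)\ge9$ consumed in the final comparisons. I have checked that your numerics close in every case, so your route buys self-containedness and a uniform treatment of all $g\ge19$; its cost is three steps that are asserted rather than proved and must be written out: (i) in the rank-one case with $h^0(Q)\ge4$, the inference ``$d_Q\ge d_4$, hence $\gamma(Q)\ge\Cl_2(C)$'' is insufficient once $h^0(Q)\ge5$ --- you need the full strength $d_Q\ge d_{2s}$ with $s=h^0(Q)-2$ from Lemma~\ref{pr}, combined with \eqref{dr}; (ii) the claim $\gamma(P)\ge2$ for $P=Q/N$ needs its own small case analysis on $h^0(P)$ and $h^1(P)$, using $d_P\ge\mu(E)$ and the degree bound forced by the hypothesis on $d$; (iii) the dichotomy ``$L$ contributes to $\Cl(C)$ or $L$ is nonspecial'' is not exhaustive, since $L$ can be special of degree $>g-1$; there you must pass to $\gamma(L)=\gamma(K_C\otimes L^*)$ and split into $h^1(L)\ge2$ (the Serre dual contributes to $\Cl(C)$) and $h^1(L)=1$, where the hypothesis $d<2g-2+\frac12\left[\frac{g-1}{2}\right]$ together with $d_F\ge d_2\ge\Cl(C)+4$ yields exactly the required $d_L<2g-6-\frac12\Cl(C)$. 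None of these is a fatal gap, but until they are filled in, your text is a strategy rather than a proof.
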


\begin{proof}
By \cite[Proposition 2.4]{cl2} we have
\begin{eqnarray*}
\gamma(E) \geq \min \left\{\frac{d_9}{3} -2,\right.&& \Cl_2(C),\quad \frac{2 \Cl(C) + 1}{3},\\& & \left.\frac{1}{3}(2 \Cl(C) + 2g -d +4) \right\}. 
\end{eqnarray*}
Note that the bound $\Cl_2(C)$ enters only in \cite[formula (2.2)]{cl2} and is a strict inequality. Moreover, the condition on $d$ is necessary and sufficient for
$$
\frac{1}{3}(2 \Cl(C) + 2g -d + 4) > \Cl_2(C).
$$
If $g \geq 23$, then 
$$
\frac{2 \Cl(C) + 1}{3} > \Cl_2(C)
$$ 
and we are finished. For $g < 23$ we need to improve the bound $\frac{2 \Cl(C) + 1}{3}$. 
The points where this enters in the proof of \cite[Proposition 2.4]{cl2} are \cite[Lemma 2.2(i)]{cl2} and \cite[formula (2.3)]{cl2}. (The inequality at the end of the proof of \cite[Lemma 2.2]{cl2} can be replaced by $\gamma(E) \geq \frac{4 \Cl(C) + 2}{3}$ which is clearly greater than $\Cl_2(C)$.) 

For \cite[Lemma 2.2(i)]{cl2}, we have
$$ 
\gamma(E)\ge\frac13(\Cl(C)+d_6)-2.
$$
By \eqref{dr},  
$$
d_{6} \geq \min \left\{ \left[ \frac{g-1}{2} \right] + 12,g+5 \right\}=\left[\frac{g-1}2\right]+12
$$
for $g\ge12$. So
$$\gamma(E)\ge\frac23\left[\frac{g-1}2\right]+2>\Cl_2(C).$$

For \cite[formula (2.3)]{cl2}, the estimate enters in 2 different cases. In the first case we have, for some integer $t\ge1$,
\begin{equation}\label{eq:new2}
\gamma(E) \geq \frac{2 \Cl(C) + 2t}{3} \geq \frac{2 \Cl(C) + 2}{3} > \Cl_2(C)
\end{equation}
for $g \geq 19$. In the second case we have
\begin{eqnarray*}
\gamma(E) & \geq & \frac{2t+4g-4}{3} - \frac{d}{3}\\
& > & \frac{2t + 4g -4}{3} -  \frac{2g -2 + \frac{1}{2} \left[ \frac{g-1}{2} \right]}{3}\\
& = &  \frac{2t+2g -2 - \frac{1}{2} \left[ \frac{g-1}{2} \right]}{3}  > \Cl_2(C).
\end{eqnarray*}
\end{proof}

\begin{rem}\label{rem18}
{\em
For $g \leq 18$ one can have $\gamma(E)\le\Cl_2(C)$ in \eqref{eq:new2}. Since $g\ge11$, this can occur only if $t=1$. If $15\le g\le18$, all the other inequalities in the proof of  \eqref{eq:new2} must be equalities. In particular, $d_2=d_{2t}$ computes $\Cl(C)$. For $16\le g\le18$, one can check that all the hypotheses of \cite[Theorem 9.1]{mlc} hold except that we do not know whether the quadratic form in the statement of that theorem can take the value $-1$. However this does not matter in view of \cite[Corollaries 2.4 and 2.6]{cl1}. So \cite[Theorem 9.2]{mlc} applies and a simple calculation shows that this gives $d_2\ge\Cl(C)+5$, contradicting the assumption that $d_2$ computes $\Cl(C)$. It follows that, for $16\le g\le18$, there exists a curve $C$ satisfying \eqref{eq:new1} for which the conclusion of Proposition \ref{prop2.1} holds.}
\end{rem}

\begin{prop} \label{prop2.2}
Let $E$ be a semistable bundle of degree $d$ computing $\Cl_3(C)$. Suppose that $E$ possesses a proper subbundle $F$ of maximal 
slope with $\rk F =2$. If 
\begin{equation} \label{eqn2.1}
d>\max \left\{ \frac{3}{4} \left[ \frac{g-1}{2} \right] + g + 12, \frac{9}{2} \left[ \frac{g-1}{2} \right] - 2g +30 \right\} 
\end{equation}
and
\begin{equation}\label{eqnextra}
d < 4g - \frac{3}{2} \left[ \frac{g-1}{2} \right] - 12,
\end{equation} then 
$$
\gamma(E) > \Cl_2(C).
$$
\end{prop}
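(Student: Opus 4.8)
The plan is to analyse $E$ through the exact sequence
$$0 \ra F \ra E \ra L \ra 0,$$
where $L := E/F$ is a line bundle, and to estimate $\gamma(E)$ from below in terms of $\gamma(F)$ and $\gamma(L)$. Writing $m := \left[\frac{g-1}{2}\right]$, so that $\Cl(C)=m$ and $\Cl_2(C)=\frac m2+2$, the two basic ingredients are the slope inequalities and a Clifford-type inequality. Since $E$ is semistable and $F$ has maximal slope among rank-$2$ subbundles, $\mu(F)\le\mu(E)=\frac d3\le\mu(L)=d_L$, so in particular $d_L\ge\frac d3$ and $d_F\le\frac{2d}3$. From $h^0(E)\le h^0(F)+h^0(L)$ one gets at once
$$3\gamma(E)\ \ge\ 2\gamma(F)+\gamma(L),$$
with equality exactly when every section of $L$ lifts to $E$. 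The whole argument reduces to showing that the right-hand side exceeds $3\Cl_2(C)=\frac{3m}2+6$.

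First I would dispose of the generic case. If $F$ contributes to $\Cl_2(C)$ (that is, $F$ is semistable with $h^0(F)\ge4$; note $\mu(F)\le\frac d3\le g-1$ is automatic) then $\gamma(F)\ge\Cl_2(C)=\frac m2+2$, and if $L$ has $h^0(L)\ge2$ and $h^1(L)\ge2$ then the Clifford bound gives $\gamma(L)\ge\Cl(C)=m$. These yield $3\gamma(E)\ge 2(\frac m2+2)+m=2m+4$, which is strictly larger than $\frac{3m}2+6$ precisely because $m\ge5$, i.e. because $g\ge11$. Thus the generic case needs none of the degree hypotheses, and the content of the proposition lies entirely in the boundary cases where one of these two estimates fails.

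Next I would enumerate and close those boundary cases, matching each to one of the degree inequalities. If $h^0(L)\le1$, then $h^0(F)\ge5$ (since $h^0(F)+h^0(L)\ge6$) while $\gamma(L)\ge\mu(L)\ge\frac d3$; the lower bound $d>\frac34 m+g+12$ then forces $\frac d3>\frac m2+2$ and closes this case. If instead $h^0(F)\le3$, then $h^0(L)\ge3$; here I would use $2\gamma(F)\ge d_F-2$ together with a lower bound on $d_F$ coming from the maximality of $F$ (a larger-degree rank-$2$ subbundle cannot exist) and, when $L$ is special, the gonality estimate $\gamma(L)\ge\Cl(C)=m$ valid once $d_L\le g-1$. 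Again the lower bounds on $d$ are exactly what make $d_F$ large enough. The remaining, and genuinely delicate, configurations are those in which $F$ is unstable or in which $L$ is non-special (or nearly so) of large degree; for the latter the Clifford bound on $\gamma(L)$ is unavailable and must be replaced by the Riemann--Roch identity $\gamma(L)=2g-d_L-2h^1(L)$.

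The hard part will be exactly this large-degree, non-special quotient. Here $\gamma(L)$ is small, so the inequality $3\gamma(E)\ge2\gamma(F)+\gamma(L)$ can only succeed if $d_L$ is kept under control, and this is where both the maximality of $F$ (which, combined with a Paranjape--Ramanan-type section count, bounds $d_L$ from above by preventing sections of $L$ from accumulating without forcing a higher-slope rank-$2$ subbundle) and the upper bound $d<4g-\frac32 m-12$ enter: the latter prevents $d_L$, and hence $h^0(L)$, from being too large, while the lower bound $d>\frac92 m-2g+30$ handles the residual small-genus configurations where the first lower bound is too weak. The real obstacle is thus the bookkeeping: one must verify that the three degree inequalities simultaneously defeat every boundary configuration, and in particular that the trade-off between $d_L$ and $h^1(L)$ in the Riemann--Roch estimate never leaves a gap. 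I expect the argument to follow \cite{cl2} closely, the new feature being the sharper value of $\Cl_2(C)$ in \eqref{eq:new1}, which is what makes the three explicit bounds on $d$ the natural thresholds.
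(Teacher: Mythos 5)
Your decomposition is exactly the one underlying the paper's proof: the paper simply quotes \cite[Lemma 3.2]{cl2}, which states that under precisely these hypotheses
$$
\gamma(E)\ \ge\ \min\left\{\frac{\Cl(C)+2\Cl_2(C)}{3},\ \frac{\Cl(C)}{3}+\frac{2d-2g-6}{9},\ \frac{2\Cl_2(C)}{3}+\frac{d}{9},\ \frac{2\Cl_2(C)}{3}+\frac{4g-d-6}{9},\ \frac{d+2g-12}{9}\right\},
$$
and then checks by direct computation that each of the five numbers exceeds $\Cl_2(C)=\frac12\left[\frac{g-1}{2}\right]+2$ exactly under \eqref{eqn2.1} and \eqref{eqnextra}; your case analysis is the case analysis behind that lemma, and your first and third cases reproduce the first and third numbers correctly. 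The genuine gap is the tool that makes ``maximal slope'' quantitative. In every case except $h^0(E/F)\le 1$ you need a lower bound on $d_F$, equivalently an upper bound on $d_{E/F}$, and this comes from the Mukai--Sakai theorem on maximal subbundles \cite{ms}: since $F$ has maximal slope, $\mu(E/F)-\mu(F)\le g$, whence $d_F\ge\frac{2(d-g)}{3}$ and $d_{E/F}\le\frac{2g+d}{3}$. This single inequality produces the second, fourth and fifth numbers above, and hence all three thresholds in \eqref{eqn2.1} and \eqref{eqnextra}. Your proposed substitute, a ``Paranjape--Ramanan-type section count'', cannot do this job: Lemma \ref{pr} bounds the degree of a bundle \emph{from below} by a gonality when it has no proper subbundle with more sections than its rank; it makes no use of slope maximality and gives no upper bound on $d_{E/F}$. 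Since the cases you explicitly defer as ``bookkeeping'' ($h^0(F)\le 3$, $E/F$ non-special, and both at once) are exactly the ones requiring this input, the proof does not close as written.

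A smaller but real misreading: the threshold $d>\frac92\left[\frac{g-1}{2}\right]-2g+30$ is not a patch for ``residual small-genus configurations''. It is the precise condition for the fifth number $\frac{d+2g-12}{9}$ to exceed $\Cl_2(C)$, i.e.\ it handles the case where \emph{neither} $F$ nor $E/F$ contributes to a Clifford index ($h^0(F)\le 3$ and $h^1(E/F)\le 1$ simultaneously), so that both Clifford-type estimates fail and only the Mukai--Sakai degree bounds remain; it is needed for every genus, though it happens to dominate the other lower bound only when $g<16$. Once the Mukai--Sakai inequality is inserted and these two remaining cases are computed, your argument becomes a correct re-proof of \cite[Lemma 3.2]{cl2} combined with the same numerical verifications the paper performs.
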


\begin{proof}
We use the bound of \cite[Lemma 3.2]{cl2}. It is clear that 
$$
\frac{\Cl(C) + 2\Cl_2(C)}{3} > \Cl_2(C).
$$
Moreover, by simple computations,
$$
\frac{\Cl(C)}{3} + \frac{2d-2g-6}{9} > \Cl_2(C)   \Leftrightarrow d > \frac{3}{4} \left[ \frac{g-1}{2}  \right] +g + 12,
$$   
$$
\frac{2\Cl_2(C)}{3} + \frac{d}{9} > \frac{2\Cl_2(C)}{3} + \frac{1}{12} \left[ \frac{g-1}{2} \right] + \frac{g+12}{9} > \Cl_2(C),
$$
$$
\frac{2\Cl_2(C)}{3} + \frac{4g-d-6}{9} > \Cl_2(C) \Leftrightarrow d < 4g - \frac{3}{2} \left[ \frac{g-1}{2} \right] -12,
$$ 
$$
\frac{d+2g-12}{9} > \Cl_2(C) \Leftrightarrow d > \frac{9}{2} \left[ \frac{g-1}{2} \right] -2g + 30.
$$
\end{proof}

\begin{rem} \label{rem2.3}
{\em 
If $g=32$ or $g \geq 34$, we have 
$$
4g - \frac{3}{2} \left[ \frac{g-1}{2} \right] - 12 > 3g-3.
$$
Since we have always $d \leq 3g-3$, this means we can delete the inequality \eqref{eqnextra} in this case.

If $g < 34$ and $g\ne32$, we can have 
\begin{equation}
4g - \frac{3}{2} \left[ \frac{g-1}{2} \right] -12 \leq d \leq 3g-3 
\end{equation}
in which case 
$$
\frac{2 \Cl_2(C)}{3} + \frac{4g-d-6}{9} \leq \Cl_2(C).
$$
This allows the possibility of a bundle $E$ computing $\Cl_3(C)$ with $\gamma(E) \leq \Cl_2(C)$ and sitting in an exact sequence
$$
0 \ra F \ra E \ra E/F \ra 0
$$
with $F$ of maximal slope and rank $2$ and $h^0(F) \geq 4, \; h^1(E/F) \leq 1$ and $d_{E/F} > g-1$. In the next proposition, we show that this still implies that $\gamma(E)>\Cl_2(C)$ if $g\ge16$.
}
\end{rem}
\begin{prop}  \label{prop2.8}
Let $E$ be a semistable bundle  of degree $d$ computing $\Cl_3(C)$. Suppose that $E$ possesses a proper subbundle of maximal slope with $\rk F =2$, $h^0(F)  \geq 4, \;h^1(E/F) \leq 1$ and $d_{E/F} >g-1$. If $g \geq 16$, then 
$$
\gamma(E) > \Cl_2(C).
$$
\end{prop}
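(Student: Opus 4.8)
The plan is to exploit the exact sequence $0\to F\to E\to L\to0$, where $L:=E/F$ is a line bundle of degree $d_L=d-d_F$ with $d_L>g-1$ and $h^1(L)\le1$. The cohomology sequence gives $h^0(E)\le h^0(F)+h^0(L)$, and since $h^0(L)=d_L-g+1+h^1(L)$ this may be rewritten as
\[
\gamma(E)\ \ge\ \frac{2\gamma(F)+\gamma(L)}{3},\qquad\text{where}\qquad \gamma(L)=2g-d_L-2h^1(L)\ \ge\ 2g-d_L-2 .
\]
Semistability of $E$ together with $\mu(E)\le g-1$ gives $\mu(F)\le\mu(E)\le g-1$, so $d_F\le2g-2$ and $d_L\le3g-3-d_F$. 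If $F$ is semistable it contributes to $\Cl_2(C)$ (it has rank $2$, $h^0(F)\ge4$ and $\mu(F)\le g-1$), so $\gamma(F)\ge\Cl_2(C)$; the case of non-semistable $F$ I would treat directly through its Harder--Narasimhan filtration. Granting $\gamma(F)\ge\Cl_2(C)$, the displayed inequality already yields $\gamma(E)>\Cl_2(C)$ as soon as $\gamma(L)>\Cl_2(C)$, i.e. as soon as $d_L<2g-2-\Cl_2(C)$. Thus I may assume throughout that $d_L\ge2g-2-\Cl_2(C)$, so that $d_L$ is large.

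To do better I would estimate $\gamma(F)$ according to whether $F$ has a line subbundle with at least two sections. If it does, then $\gamma(F)\ge\Cl(C)$ by Lemma \ref{lempr2} (here $d_F\le2g-2$); combining $d_F\ge2(\Cl(C)+2)$ with $d_L\le3g-3-d_F$, the displayed bound becomes $\gamma(E)\ge\frac{4\Cl(C)-g+5}{3}$, and substituting $\Cl(C)=\left[\frac{g-1}{2}\right]$, $\Cl_2(C)=\frac12\left[\frac{g-1}{2}\right]+2$ one checks that this exceeds $\Cl_2(C)$ for $g\ge16$. If, on the other hand, $F$ has no line subbundle with two sections, then Lemma \ref{pr} applies to $F$ and gives $d_F\ge d_4=\Cl(C)+8$, the value $d_4=\Cl(C)+8$ being the one already established for these curves; this only returns $\gamma(F)\ge\Cl_2(C)$, so the additive bound degenerates and genuinely new input is needed. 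This is the heart of the proof.

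In this remaining case I would bound the number $w:=h^0(E)-h^0(F)$ of sections of $L$ that lift to $E$ (equivalently, the corank of the connecting map $H^0(L)\to H^1(F)$) by studying the saturation $\widehat G\subseteq E$ of the subsheaf generated by these lifts. Since the lifts map to nonzero sections of $L$ we have $\widehat G\not\subseteq F$ and $w\le h^0(\widehat G)$. If $\rk\widehat G=1$, then $\widehat G$ is a line subbundle of the semistable $E$, so $\deg\widehat G\le\mu(E)\le g-1$ and Clifford's inequality gives $w\le\frac{g+1}{2}$. If $\rk\widehat G=2$, then maximality of $\mu(F)$ gives $\mu(\widehat G)\le\mu(F)$, and for $w\ge4$ a contribution estimate forces $\gamma(\widehat G)\ge\Cl_2(C)$, hence $w\le\frac{d_F}{2}-\Cl_2(C)+2$ (the case $w\le3$ being immediate). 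In either situation the resulting upper bound on $h^0(E)=h^0(F)+w$, together with $\gamma(F)\ge\Cl_2(C)$, yields $\gamma(E)>\Cl_2(C)$ for $g\ge16$, precisely because we are in the range $d_L\ge2g-2-\Cl_2(C)$.

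The main obstacle is the last possibility, $\rk\widehat G=3$: here the lifts of the sections of $L$ already generate $E$, so $E$ is globally generated and the slope and Clifford estimates above give no purchase. To close this sub-case I expect to need a stability property of the syzygy (dual span) bundle associated with $E$, in the spirit of Butler's conjecture --- the subject of Section \ref{cs} --- used in conjunction with the maximality of $F$ and the special value $d_4=\Cl(C)+8$, in order to rule out $\gamma(E)\le\Cl_2(C)$ for a globally generated $E$ when $g\ge16$. I anticipate that essentially all the difficulty of the proposition is concentrated in this globally generated sub-case.
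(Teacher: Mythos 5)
Your opening reductions are sound and essentially match the paper: the additive bound $\gamma(E)\ge\frac{2\gamma(F)+\gamma(L)}{3}$, the reduction to $d_L\ge 2g-2-\Cl_2(C)$, and the disposal of the case where $F$ has a line subbundle with $h^0\ge 2$ via Lemma \ref{lempr2} (the paper phrases this last step as a contradiction with its inequality \eqref{eqn2.3}, but the computation is the same and does close for $g\ge 16$). The genuine gap is exactly where you place ``the heart of the proof'': the case where $F$ has no line subbundle with two sections. There you invoke Lemma \ref{pr} only with $t=4$ (coming from $h^0(F)\ge 4$), correctly note that this merely returns $\gamma(F)\ge\Cl_2(C)$, and then switch to a different device --- bounding the number $w$ of lifted sections through the saturation $\widehat G$ of the subsheaf they generate. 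That device does not close the proof. In your rank-$2$ case the ``contribution estimate'' requires $\widehat G$ to be semistable, which maximality of $\mu(F)$ does not give (it bounds $\mu(\widehat G)$ from above but says nothing about sub-line bundles of $\widehat G$), and in the rank-$3$ case you concede you have no argument: the tool you propose, stability of the dual span bundle in the spirit of Butler, is precisely a conjecture --- Section \ref{cs} of the paper discusses it but proves nothing usable here. So the proposal is incomplete in the essential case.

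The idea you are missing is that the contradiction hypothesis $\gamma(E)\le\Cl_2(C)$ can be fed back into Paranjape--Ramanan to make $t$ large, rather than settling for $t=4$. The paper argues as follows: the case $h^1(E/F)=0$ is easy, so assume $h^1(E/F)=1$; Mukai--Sakai gives $d_{E/F}\le\frac{2g+d}{3}$, i.e.\ $\gamma(E/F)\ge\frac{4g-d-6}{3}$; combined with $\frac{2\gamma(F)+\gamma(E/F)}{3}\le\gamma(E)\le\frac12\left[\frac{g-1}{2}\right]+2$, this forces $\gamma(F)$ to be small and hence, since $h^0(F)=\frac{d_F}{2}-\gamma(F)+2$, forces $h^0(F)$ to be large (a lower bound growing linearly in $d$). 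Lemma \ref{pr} then applies with $t=2(h^0(F)-2)\ge\frac{2g+d}{3}-\frac32\left[\frac{g-1}{2}\right]-8$, so $d_F\ge d_t\ge\min\left\{\Cl(C)+2t,\,g+t-1\right\}$ by \eqref{dr}, a strong lower bound on $d_F$. On the other hand, $\gamma(E)\le\Cl_2(C)$ and $\gamma(F)\ge\Cl_2(C)$ force $\gamma(E/F)\le\Cl_2(C)$, giving the upper bound $d_F\le d-2g+4+\frac12\left[\frac{g-1}{2}\right]$. Using $d\le 3g-3$, these two bounds are incompatible for $g\ge 16$, which is the desired contradiction. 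This is entirely elementary and avoids lifted sections, saturations, and any appeal to Butler-type stability.
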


\begin{proof}
If $h^1(E/F) = 0$, then, using \cite{ms}, we obtain
$$
\gamma(E/F)=\gamma((E/F)^*\otimes K_C)=2g-d_{E/F} \geq \frac{4g-d}{3}.
$$
Since $\gamma(F) \geq \Cl_2(C)$, this means that $\gamma(E) > \Cl_2(C)$ provided $4g-d > 3 \Cl_2(C)$. A simple computation (using $d\le3g-3$) shows that this holds for $g \geq 10.$ So we can suppose $h^1(E/F) = 1$.

Suppose $\gamma(E) \leq \Cl_2(C)$. As in the proof of \cite[Lemma 3.2]{cl2} we get $d_{E/F} \leq \frac{2g+d}{3}$ or equivalently
$$
\gamma(E/F) \geq \frac{4g-d-6}{3}.
$$
Now
\begin{equation}  \label{eqn2.3}
\frac{2 \gamma(F) +  \gamma(E/F)}{3} \leq \gamma(E) \leq \frac{1}{2} \left[ \frac{g-1}{2} \right]  +2.
\end{equation}
So 
$$
2\gamma(F) \leq \frac{3}{2} \left[ \frac{g-1}{2} \right]  + 6 - \gamma(E/F)
\leq \frac{3}{2} \left[ \frac{g-1}{2} \right]  +6 -  \frac{4g-d-6}{3}.
$$
Hence
$$
h^0(F) = 2 - \gamma(F) + \frac{d_F}{2} \geq - \frac{3}{4} \left[ \frac{g-1}{2} \right]  -2 + \frac{2g+d}{6}.
$$
If $F$ possesses a line subbundle with $h^0 \geq 2$, then by Lemma \ref{lempr2}, $\gamma(F) \geq \Cl(C)$ which contradicts \eqref{eqn2.3}. So by Lemma \ref{pr},
$$
d_F \geq d_t \quad \mbox{with} \quad t = 2(h^0(F)-2) \geq \frac{2g+d}{3} - \frac{3}{2} \left[ \frac{g-1}{2} \right]  -8.
$$
Since $d_t \geq \min \{\Cl(C) + 2t ,g+t-1\}$ by \eqref{dr}, it suffices to show that
\begin{equation} \label{eqn2.4}
d_F < \frac{5g+d}{3} - \frac{3}{2} \left[ \frac{g-1}{2} \right] - 9 
\end{equation}
and
\begin{equation} \label{eqn2.5}
d_F < \frac{4g+2d}{3} - 2 \left[ \frac{g-1}{2} \right] - 16. 
\end{equation}
Since we are assuming that $\gamma(E) \leq \Cl_2(C)$ and we know that $\gamma(F) \geq \Cl_2(C)$, we must have $\gamma(E/F) \leq \Cl_2(C)$, i.e.
$$
d_{E/F} \geq 2g-4 - \frac{1}{2} \left[ \frac{g-1}{2} \right]
$$
and hence
$$
d_F \leq d - 2g+4 + \frac{1}{2} \left[ \frac{g-1}{2} \right].
$$
So for \eqref{eqn2.4} it is enough to prove that
$$
d - 2g + 4 + \frac{1}{2} \left[ \frac{g-1}{2} \right] < \frac{5g+d}{3} - \frac{3}{2} \left[ \frac{g-1}{2} \right] -9.
$$
Using $d \leq 3g-3$, it is sufficient to show that
$$
-10g + 66 + 12 \left[ \frac{g-1}{2} \right] < 0,
$$
which is valid for $g \geq 16$.

For \eqref{eqn2.5} it is enough to prove that
$$
d - 2g + 4 + \frac{1}{2} \left[ \frac{g-1}{2} \right] < \frac{4g+2d}{3} - 2 \left[ \frac{g-1}{2} \right] -16.
$$
Again using $d \leq 3g-3$, it is sufficient to show that
$$
-14g +114 + 15 \left[ \frac{g-1}{2} \right] < 0,
$$
which is valid for $g \geq 16$.
\end{proof}

\begin{prop} \label{prop2.4}
Let $E$ be a semistable bundle of degree $d$ computing $\Cl_3(C)$. Suppose that $E$ possesses a proper 
subbundle $L$ of maximal slope with $\rk L = 1$. If 
\begin{equation}\label{eqdg} 
d > g + \frac{3}{2} \left[ \frac{g-1}{2} \right] + 6,
\end{equation} 
then
$$
\gamma(E) > \Cl_2(C).
$$
\end{prop}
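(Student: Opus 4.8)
The plan is to work with the extension
\[
0 \ra L \ra E \ra Q \ra 0, \qquad Q:=E/L \ \text{of rank } 2,
\]
and to use the maximality of $\mu(L)$ twice. First I would check that $Q$ is semistable: any line subbundle $Q'\subset Q$ pulls back to a rank-$2$ subbundle $\pi^{-1}(Q')$ of $E$ containing $L$, of slope $\frac{d_L+\deg Q'}{2}$, and maximality forces this to be $\le\mu(L)=d_L$, whence $\deg Q'\le d_L\le\mu(E)=\frac d3\le\mu(Q)$. From $h^0(E)\le h^0(L)+h^0(Q)$ one obtains the usual averaging inequality $\gamma(E)\ge\frac{\gamma(L)+2\gamma(Q)}{3}$, so it suffices to prove $\gamma(L)>\Cl_2(C)$ and $\gamma(Q)\ge\Cl_2(C)$.

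The crucial step is a lower bound on $d_L$, since the bound $d_L\le\frac d3$ coming from semistability of $E$ is far too weak. By the theorem on maximal subbundles (every rank-$2$ bundle contains a line subbundle $M$ with $\deg M\ge\frac{d_Q-g}{2}$), the preimage $\pi^{-1}(M)\subset E$ has slope $\frac{d_L+\deg M}{2}$, which by maximality of $\mu(L)$ cannot exceed $d_L$; rearranging gives $d_L\ge\deg M\ge\frac{d-d_L-g}{2}$, that is $d_L\ge\frac{d-g}{3}$. Now \eqref{eqdg} says precisely that $\frac{d-g}{3}>\frac12\left[\frac{g-1}{2}\right]+2=\Cl_2(C)$, so $d_L>\Cl_2(C)$. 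Since $d_L\le\frac d3\le g-1$, this already yields $\gamma(L)>\Cl_2(C)$: if $h^0(L)\ge2$ then $L$ contributes to $\Cl(C)$, so $\gamma(L)\ge\Cl(C)>\Cl_2(C)$ by \eqref{eq:new1}; and if $h^0(L)\le1$ then $\gamma(L)\ge d_L>\Cl_2(C)$.

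It remains to show $\gamma(Q)\ge\Cl_2(C)$, and here I would split on $h^0(Q)$ and on $\mu(Q)$. If $h^0(Q)\le3$, then $\gamma(Q)=\mu(Q)-h^0(Q)+2\ge\frac d3-1>\Cl_2(C)$ by \eqref{eqdg}, using $\mu(Q)\ge\frac d3$. If $h^0(Q)\ge4$ and $\mu(Q)\le g-1$, then $Q$ contributes to $\Cl_2(C)$ and $\gamma(Q)\ge\Cl_2(C)$. The remaining case $\mu(Q)>g-1$ is the one requiring care, because semistability of $E$ no longer bounds $\mu(Q)$: here I would pass to $Q^*\otimes K_C$, which is semistable of slope $<g-1$ with $\gamma(Q^*\otimes K_C)=\gamma(Q)$ and $h^0=h^1(Q)$ (exactly as the use of \cite{ms} in Proposition \ref{prop2.8}). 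If $h^1(Q)\ge4$ it contributes and $\gamma(Q)\ge\Cl_2(C)$, while if $h^1(Q)\le3$ a direct Clifford estimate combined with $d_Q\le\frac{2d+g}{3}$ (from $d_L\ge\frac{d-g}{3}$) and $d\le3g-3$ gives $\gamma(Q)\ge\frac{5g}{6}-2>\Cl_2(C)$ for $g\ge7$.

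Feeding $\gamma(L)>\Cl_2(C)$ and $\gamma(Q)\ge\Cl_2(C)$ into the averaging inequality gives $\gamma(E)>\Cl_2(C)$, as required. The main obstacle is the degree estimate $d_L\ge\frac{d-g}{3}$: it is precisely the maximal-subbundle input applied to the quotient $Q$ that converts the maximality of $\mu(L)$ into a lower bound matching the threshold in \eqref{eqdg}, and everything else is a routine case analysis. The secondary difficulty is the high-slope case $\mu(Q)>g-1$, where one must dualise and fall back on Clifford's theorem rather than on the definition of $\Cl_2(C)$.
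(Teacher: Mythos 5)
Your proof is correct, and its crucial step is exactly the paper's: applying the Mukai--Sakai maximal-subbundle bound to the quotient $Q=E/L$ and then using the maximality of $\mu(L)$ to convert it into $d_L\ge\frac{d-g}{3}$, which is precisely the estimate that matches the threshold \eqref{eqdg} (the paper phrases this as replacing $\frac{d-2g}{3}$ by $\frac{d-g}{3}$ in \cite[formula (3.4)]{cl2}). The only real difference is packaging: where you reconstruct the surrounding argument from scratch (semistability of $Q$, the averaging inequality $\gamma(E)\ge\frac{\gamma(L)+2\gamma(Q)}{3}$, the case split on $h^0(L)$, $h^0(Q)$ and $\mu(Q)$, with Serre duality in the high-slope case), the paper outsources all of that to the proof of \cite[Lemma 3.1]{cl2} and merely verifies that each term in that lemma's minimum exceeds $\Cl_2(C)$ under \eqref{eqdg}, so your version has the advantage of being self-contained.
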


\begin{proof}
We follow the proof of \cite[Lemma 3.1]{cl2}. Clearly
$$
\frac{\Cl(C) + 2\Cl_2(C)}{3} > \Cl_2(C).
$$
Moreover,
$$
\frac{\Cl(C)}{3} + \frac{2d-6}{9} > \Cl_2(C)
$$
under the assumption on $d$. 

It remains to handle the case where $h^0(L) \leq 1$. In this case we have an exact sequence 
$$
0 \ra L \ra E \ra Q \ra 0
$$
and, by \cite{ms}, 
$$
\mu(Q) - \mu(L) \leq g.
$$
Moreover, every line subbundle $M$ of $Q$ must have $d_M \leq d_L$ (otherwise the pullback of 
$M$ to $E$ would have slope greater than $d_L$). We can assume $M$ has maximal slope as a subbundle of $Q$, so, again by \cite{ms},
$$
\mu(Q/M) - \mu(M) \leq g.
$$
In other words,
$$
d - d_L -2d_M \leq g.
$$
It follows that 
$$
3d_L \geq d_L + 2d_M \geq d - g;
$$ 
hence we can replace $\frac{d-2g}{3}$ in \cite[formula (3.4)]{cl2} by $\frac{d-g}{3}$. It therefore remains to prove that
$$
\frac{2 \Cl_2(C)}{3} + \frac{d-g}{9} > \Cl_2(C)
$$
or equivalently
$$
\frac{d-g}{9} > \frac{1}{3} \Cl_2(C).
$$
This is equivalent to $d > g+\frac{3}{2} \left[ \frac{g-1}{2} \right] +6$.
\end{proof}

Combining everything, we get the following theorem.

\begin{theorem} \label{thm2.5}
If $g \geq 16$, there exists a curve $C$ satisfying \eqref{eq:new1} such that either
$$
\Cl_3(C) > \Cl_2(C)
$$
or there exists a semistable bundle $E$ of degree $d < 2g-2 + \frac{1}{2}\left[ \frac{g-1}{2} \right]$
which possesses no proper subbundle $F$ with $h^0(F) \geq \rk F + 1$ such that
$$
\Cl_2(C)\ge\gamma(E) \geq \frac{d_9}{3} -2.
$$
If $g\ge19$, this holds for every curve $C$ satisfying \eqref{eq:new1}.
\end{theorem}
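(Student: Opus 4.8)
The plan is to assume the first alternative fails, i.e. $\Cl_3(C)\le\Cl_2(C)$, and then produce the bundle demanded by the second alternative. Fix $E$ computing $\Cl_3(C)$, so $\gamma(E)=\Cl_3(C)\le\Cl_2(C)$; since \eqref{eq:new1} gives $\Cl_2(C)<\Cl(C)$, we are in the regime $\Cl_3(C)\le\Cl_2(C)<\Cl(C)$, whence Proposition \ref{newprop} makes $E$ stable. Writing $d:=d_E$, the entire argument then reduces to the single degree estimate $d<2g-2+\frac12\left[\frac{g-1}{2}\right]$; once this is secured, Proposition \ref{prop2.1} (applied directly when $g\ge19$, and through Remark \ref{rem18} when $16\le g\le18$) converts the hypothesis $\gamma(E)\le\Cl_2(C)$ into exactly the data of the second alternative, namely $\gamma(E)\ge\frac{d_9}{3}-2$ together with the absence of a proper subbundle $F$ with $h^0(F)\ge\rk F+1$.

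To establish the degree estimate I would take a proper subbundle $F\subset E$ of maximal slope; stability of the rank-$3$ bundle $E$ forces $\rk F\in\{1,2\}$. If $\rk F=1$, then Proposition \ref{prop2.4} shows \eqref{eqdg} would yield $\gamma(E)>\Cl_2(C)$, so \eqref{eqdg} fails and $d\le g+\frac32\left[\frac{g-1}{2}\right]+6$; comparing integer parts shows this is below $2g-2+\frac12\left[\frac{g-1}{2}\right]$ for $g\ge16$. If $\rk F=2$, Proposition \ref{prop2.2} shows \eqref{eqn2.1} and \eqref{eqnextra} cannot hold simultaneously. If \eqref{eqn2.1} fails, $d$ is bounded by the maximum occurring in \eqref{eqn2.1}, and the same kind of routine comparison again gives $d<2g-2+\frac12\left[\frac{g-1}{2}\right]$ for $g\ge16$.

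The genuinely delicate case is $\rk F=2$ with \eqref{eqnextra} failing, i.e. $d\ge4g-\frac32\left[\frac{g-1}{2}\right]-12$. For $g=32$ or $g\ge34$ this is impossible, since by Remark \ref{rem2.3} the right-hand side already exceeds $3g-3\ge d$. For the remaining genera $16\le g\le33$, $g\ne32$, I would appeal to Remark \ref{rem2.3}, which singles out the only configuration then surviving: $E$ sits in $0\to F\to E\to E/F\to0$ with $F$ of maximal slope and rank $2$, $h^0(F)\ge4$, $h^1(E/F)\le1$ and $d_{E/F}>g-1$. Proposition \ref{prop2.8}, valid precisely for $g\ge16$, rules this out by giving $\gamma(E)>\Cl_2(C)$, a contradiction; hence \eqref{eqnextra} cannot fail and we revert to the bounded case. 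This is the step I expect to be the main obstacle: the structural facts $h^0(F)\ge4$, $h^1(E/F)\le1$, $d_{E/F}>g-1$ are not formal consequences of stability but emerge from tracking which term of the bound of \cite[Lemma 3.2]{cl2} is binding once \eqref{eqn2.1} holds, and Remark \ref{rem2.3} is what packages that bookkeeping; the care lies in verifying that its hypotheses really do apply here.

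With $d<2g-2+\frac12\left[\frac{g-1}{2}\right]$ established, the conclusion of Proposition \ref{prop2.1} delivers the required bundle $E$, of degree $d<2g-2+\frac12\left[\frac{g-1}{2}\right]$: it is stable, hence semistable, has no proper subbundle $F$ with $h^0(F)\ge\rk F+1$, and satisfies $\Cl_2(C)\ge\gamma(E)\ge\frac{d_9}{3}-2$. For $g\ge19$ this uses no special choice of curve and so holds for every $C$ satisfying \eqref{eq:new1}; for $16\le g\le18$ one runs the same argument on the curve furnished by Remark \ref{rem18}, for which the conclusion of Proposition \ref{prop2.1} is available.
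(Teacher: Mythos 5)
Your proposal is correct and follows essentially the same route as the paper: the paper's (very terse) proof is precisely the assembly of Propositions \ref{prop2.4}, \ref{prop2.2}, \ref{prop2.8} with Remarks \ref{rem2.3} and \ref{rem18}, plus the numerical check that the degree bounds forced by the failure of \eqref{eqn2.1} and \eqref{eqdg} lie below the threshold of Proposition \ref{prop2.1}, which is exactly what you carry out. The only superfluous step is your appeal to Proposition \ref{newprop} for stability of $E$: a proper subbundle of a rank-$3$ bundle has rank $1$ or $2$ regardless, so the case split needs no stability.
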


\begin{proof} 
The theorem follows from Propositions \ref{prop2.1}, \ref{prop2.2}, \ref{prop2.8} and \ref{prop2.4} and Remarks \ref{rem18} and \ref{rem2.3}. We need only to check that the lower bounds \eqref{eqn2.1} and \eqref{eqdg} for $d$ are less than the upper bound of Proposition \ref{prop2.1}.
\end{proof}

\begin{lem} \label{lem2.6}
If $14 \leq g \leq 24$, then 
\begin{equation}\label{eq:new3}
\frac{d_9}{3} -2> \Cl_2(C).
\end{equation}
\end{lem}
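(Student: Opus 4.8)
The plan is to reduce the claim to a single lower bound on $d_9$ coming from the gonality inequality \eqref{dr} and then to check the resulting elementary inequality across the range $14\le g\le24$. Writing $m:=\left[\frac{g-1}2\right]$, the hypotheses \eqref{eq:new1} of this section give $\Cl(C)=m$ and $\Cl_2(C)=\frac{m}2+2$, so the assertion $\frac{d_9}3-2>\Cl_2(C)$ is equivalent to
$$
d_9>\tfrac{3m}2+12.
$$
First I would apply \eqref{dr} with $r=9$ to obtain
$$
d_9\ge\min\{\Cl(C)+18,\,g+8\}=\min\{m+18,\,g+8\}.
$$
It therefore suffices to prove that $\min\{m+18,\,g+8\}>\frac{3m}2+12$, and I would split the argument according to which of the two terms realises the minimum.

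In the case $m+18\le g+8$ the minimum is $m+18$, and the inequality $m+18>\frac{3m}2+12$ is equivalent to $m<12$; since $g\le24$ forces $m\le11$, this holds. In the complementary case $m+18>g+8$ the minimum is $g+8$, and $g+8>\frac{3m}2+12$ is equivalent to $m<\frac{2g-8}3$. Here I would treat the two parities separately, using $m=\frac{g-2}2$ when $g$ is even and $m=\frac{g-1}2$ when $g$ is odd; in each parity the inequality reduces to a linear condition on $g$ that is satisfied for all $g\ge14$ (the odd case being the binding one).

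There is no genuine obstacle in this argument; the only points requiring care are the correct evaluation of the floor function in each parity and the strictness of the inequalities. It is worth noting that the two halves of the case split are exactly what pin down the stated interval: the bound $g\le24$ (equivalently $m\le11<12$) is what keeps the first case strict, while $g\ge14$ is what keeps the second case strict. Outside this interval one of the two strict inequalities degenerates to an equality, so the range $14\le g\le24$ in the statement is sharp for this method.
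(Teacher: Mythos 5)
Your proof is correct and follows essentially the same route as the paper: the paper likewise applies \eqref{dr} with $r=9$ to get $d_9\ge\min\left\{\left[\frac{g-1}{2}\right]+18,\,g+8\right\}$ and then dismisses the remaining comparison with $\Cl_2(C)=\frac12\left[\frac{g-1}{2}\right]+2$ as a ``simple computation'', which is exactly the case and parity analysis you spelled out. One minor quibble: your closing sharpness claim is not quite accurate, since the same bound also yields the strict inequality at $g=12$ (the genuine degenerate values adjacent to the stated range are $g=13$ and $g=25$); this does not affect the validity of your proof of the lemma as stated.
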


\begin{proof}
By \eqref{dr}, we have
$$
d_9 \geq \min \left\{\left[ \frac{g-1}{2} \right] + 18,g+8  \right\}.
$$
The assertion follows from a simple computation.
\end{proof}

\begin{theorem} \label{thm2.9}
If $16 \leq g \leq 24$, then there exists a curve $C$ of genus $g$ such that
$$
\Cl_3(C) > \Cl_2(C).
$$
\end{theorem}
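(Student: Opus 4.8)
The plan is to obtain Theorem \ref{thm2.9} as an immediate combination of the dichotomy in Theorem \ref{thm2.5} with the numerical inequality of Lemma \ref{lem2.6}; essentially all the substantive work has already been carried out in Propositions \ref{prop2.1}--\ref{prop2.4} and the accompanying remarks, so the remaining task is to show that the second alternative of Theorem \ref{thm2.5} is vacuous in the stated range of $g$.

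Concretely, I would first invoke Theorem \ref{thm2.5}, whose hypothesis $g\ge16$ is satisfied throughout $16\le g\le24$. This produces a curve $C$ satisfying \eqref{eq:new1} for which one of two things holds: either $\Cl_3(C)>\Cl_2(C)$, which is exactly the desired conclusion, or there exists a semistable bundle $E$ computing $\Cl_3(C)$ with no proper subbundle $F$ satisfying $h^0(F)\ge\rk F+1$ and with
$$
\Cl_2(C)\ge\gamma(E)\ge\frac{d_9}{3}-2.
$$
The second step is to rule out this latter possibility. Since $16\le g\le24$ lies inside the range $14\le g\le24$ of Lemma \ref{lem2.6}, that lemma gives $\frac{d_9}{3}-2>\Cl_2(C)$. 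Chaining this with the displayed inequality yields $\Cl_2(C)\ge\gamma(E)\ge\frac{d_9}{3}-2>\Cl_2(C)$, a contradiction. Hence the bundle $E$ cannot exist, and so the first alternative of Theorem \ref{thm2.5} must hold, giving $\Cl_3(C)>\Cl_2(C)$.

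There is no genuine analytic obstacle in this final argument — the only delicate point is bookkeeping on the range of $g$. The lower bound $g\ge16$ is forced by Theorem \ref{thm2.5} (and ultimately by Propositions \ref{prop2.8} and \ref{prop2.4}, which require $g\ge16$), while the upper bound $g\le24$ is forced precisely by Lemma \ref{lem2.6}, since the estimate $\frac{d_9}{3}-2>\Cl_2(C)$ derived there from \eqref{dr} is what eliminates the exceptional bundle $E$; one checks that this inequality can fail once $g>24$, which is why the theorem is stated only up to genus $24$. Thus the intersection $16\le g\le24$ of the two admissible ranges is exactly where both inputs apply simultaneously, and the proof is complete.
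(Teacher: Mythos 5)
Your proof is correct and is exactly the paper's argument: the paper deduces Theorem \ref{thm2.9} immediately from Theorem \ref{thm2.5} and Lemma \ref{lem2.6}, ruling out the second alternative by the same contradiction $\Cl_2(C)\ge\gamma(E)\ge\frac{d_9}{3}-2>\Cl_2(C)$. One small caveat on your closing remark: for $g>24$ it is only the estimate for $d_9$ coming from \eqref{dr} that breaks down, not necessarily the inequality $\frac{d_9}{3}-2>\Cl_2(C)$ itself, which Remark \ref{rem2.10} notes may well still hold.
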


\begin{proof}
This follows at once from Theorem \ref{thm2.5} and Lemma \ref{lem2.6}.
\end{proof}

\begin{rem}\label{rem2.10}
{\em It is possible that \eqref{eq:new3} holds for other values of $g$, indeed for all $g\ge14$. If this is so, one can extend Theorem \ref{thm2.9} accordingly.}
\end{rem}

\section{An improved lower bound}\label{improved}

In this section we shall improve the lower bound of \cite[Theorem 4.1]{cl2}. We have already remarked in the proof of \cite[Theorem 4.6(ii)]{lmn} that
$$
\Cl_3(C) \geq \min\left\{ \frac{d_9}{3} -2, \frac{2\Cl(C) + 1}{3}, \frac{2 \Cl_2(C) + 2}{3} \right\}.
$$
For $\Cl_2(C) < \Cl(C)$ this is an improvement. We consider here the case $\Cl_2(C) = \Cl(C)$. Note that this is true for $\Cl(C)\le4$ by Lemma \ref{l2.1}, for all smooth plane curves \cite[Proposition 8.1]{cliff} and for the general curve of genus $\leq 19$ (see \cite[Theorem 1.7]{fo} for the case $g\le16$).

\begin{theorem} \label{thm3.1}
Let $C$ be a curve of genus $g \geq 7$ such that $\Cl_2(C) = \Cl(C) \geq 2$. Then 
$$
\Cl_3(C)  \geq \min \left\{ \frac{d_9}{3} -2, \frac{2\Cl(C) +2}{3} \right\}.
$$Moreover, if $\Cl_3(C)<\Cl(C)$, then any bundle computing $\Cl_3(C)$ is stable.
\end{theorem}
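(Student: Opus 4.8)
The plan is to prove the two assertions separately, starting with the stability clause, which is essentially free. If $\Cl_3(C)<\Cl(C)$ then, since $\Cl_2(C)=\Cl(C)$, we are in the first case of Proposition~\ref{newprop} (namely $\Cl_3(C)<\Cl_2(C)=\Cl(C)$), so every bundle computing $\Cl_3(C)$ is stable and nothing more is required. For the lower bound, \eqref{e2.1} gives $\Cl_3(C)\le\Cl(C)$; if equality holds then $\Cl_3(C)=\Cl(C)\ge\frac{2\Cl(C)+2}{3}$ because $\Cl(C)\ge2$, and we are done. Hence I may assume $\Cl_3(C)<\Cl(C)$ and fix a bundle $E$ computing $\Cl_3(C)$, which by the above is stable of rank $3$ with $h^0(E)=3+s$, $s\ge3$.

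I would recover the stated bound by re-deriving, and then sharpening, the estimate $\Cl_3(C)\ge\min\{\frac{d_9}{3}-2,\frac{2\Cl(C)+1}{3},\frac{2\Cl_2(C)+2}{3}\}$ quoted before the theorem; since $\Cl_2(C)=\Cl(C)$ its last term is already $\frac{2\Cl(C)+2}{3}$, so the whole task is to eliminate the middle term $\frac{2\Cl(C)+1}{3}$. The dichotomy is Lemma~\ref{pr}: if $E$ has no proper subbundle $F$ with $h^0(F)>\rk F$, then $d_E\ge d_{3s}\ge d_9+(3s-9)$, and from $\gamma(E)=\frac{d_E-2s}{3}$ one gets $\gamma(E)\ge\frac{d_9+s-9}{3}\ge\frac{d_9}{3}-2$. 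Otherwise I choose $F$ of maximal slope (of rank $1$ or $2$, by stability), set $0\to F\to E\to Q\to0$, let $\delta\ge0$ be the number of sections of $Q$ failing to lift to $E$, and use the additivity identity
\[
\gamma(E)=\tfrac13\bigl(\rk F\cdot\gamma(F)+\rk Q\cdot\gamma(Q)+2\delta\bigr).
\]

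The hypothesis $\Cl_2(C)=\Cl(C)$ now controls most pieces. A rank-$2$ piece that is semistable with $h^0\ge4$ and slope $\le g-1$ has $\gamma\ge\Cl_2(C)=\Cl(C)$; one with a line subbundle having $h^0\ge2$ has $\gamma\ge\Cl(C)$ by Lemma~\ref{lempr2}; and since the maximal-slope line subbundle has $d_F\le g-2$, any line subbundle with $h^0\ge2$ satisfies $h^1\ge2$ and hence $\gamma\ge\Cl(C)\ge2$. Whenever every piece admits such an estimate, the rank-$2$ part contributes at least $2\Cl(C)$ and the line-bundle part at least $2$, so $\rk F\cdot\gamma(F)+\rk Q\cdot\gamma(Q)\ge2\Cl(C)+2$ and $\gamma(E)\ge\frac{2\Cl(C)+2}{3}$ as wanted. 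The residual rank-$2$ pieces with only $h^0=3$ are pinned down by Lemma~\ref{l2.4} as bundles $E_L$ with $d_L=d_2$ and estimated through \eqref{dr}; I expect these too to yield $\frac{2\Cl(C)+2}{3}$.

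The one genuinely dangerous configuration, and the step I expect to be the main obstacle, is a rank-$2$ subbundle $F$ of maximal slope with $\gamma(F)$ near $\Cl(C)$ whose quotient is a \emph{line bundle} $M$ of large degree: if $h^1(M)\le1$ then $\gamma(M)=2g-d_M-2h^1(M)$ can drop below $2$, and with $\delta=0$ this is exactly what generated the term $\frac{2\Cl(C)+1}{3}$ (the extremal instance being $\gamma(F)=\Cl(C)$, $h^1(M)=0$, $d_M=2g-1$, $\gamma(M)=1$). Extracting the missing $\tfrac13$ is the crux, and it is the only place where $\Cl(C)\ge2$ and stability must be combined with the slope constraint $\mu(E)\le g-1$ (which forces $d_F\le g-2$ and thus severely limits $\gamma(F)$ and $h^0(F)$). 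My plan is to rule out $\gamma(M)<2$ in this situation: since then $h^0(M)=d_M+1-g$ is large and all of these sections lift to $E$, the bundle acquires so many sections that I would argue the resulting sub-structure is incompatible with $F$ having maximal slope, forcing $\gamma(M)\ge2$ and hence $\gamma(E)\ge\frac{2\Cl(C)+2}{3}$. (Rank-$2$ quotients of slope $>g-1$ should be disposed of along the same lines, via Serre duality and the rank-$2$ bound \eqref{e2.2}.) This small-$\Cl(C)$ regime, where $\frac{2\Cl(C)+1}{3}<\frac{d_9}{3}-2$ so that the middle term is binding, is precisely the one relevant to the applications, e.g.\ the value $\Cl_3=\frac{10}{3}$ for the general curve of genus $9$, where $\Cl(C)=4$.
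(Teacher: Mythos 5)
Your stability clause is correct and is exactly the paper's argument (Proposition~\ref{newprop}), and your opening reductions --- disposing of the case $\Cl_3(C)=\Cl(C)$ via $\Cl(C)\ge2$, and the branch with no subbundle $F$ satisfying $h^0(F)>\rk F$ via Lemma~\ref{pr} --- are sound. The framework you then set up (maximal-slope subbundle, the additivity identity for $\gamma$, piece-by-piece estimates) is also the paper's framework, inherited from \cite{cl2}. The problem is that at the crux you have a plan rather than a proof: the two decisive steps are announced with ``I expect these too to yield $\frac{2\Cl(C)+2}{3}$'' and ``I would argue the resulting sub-structure is incompatible with $F$ having maximal slope'', and neither is carried out. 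In the paper, eliminating the term $\frac{2\Cl(C)+1}{3}$ is not one local estimate but a degree-by-degree case analysis (Lemmas~\ref{lem3.2}--\ref{lem3.5}): the ``many sections'' estimates work only for $d\le 2g+2$, the maximal-slope estimates only for $d\ge 2g+4$ (rank $1$, using the integrality of $3\gamma(E)$) or $d\ge 2g+3$ (rank $2$, using \eqref{eq3.1}); the boundary value $d=2g+3$ needs a separate argument (end of the proof of Proposition~\ref{prop3.6}); and, most seriously, \eqref{eq3.1} simply fails for $g=7$ and $g=9$, which is why the paper needs Propositions~\ref{prop3.7} and~\ref{prop3.8} --- substantial arguments involving stability of $E/L$, a bound on $h^0(E/L)$ extracted from $\Cl_2(C)$, and, for $g=9$, the upper bound $\Cl_3(C)\le\frac{10}{3}$ from \cite{lmn}. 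Ironically, $g=9$, $\Cl(C)=4$ is the case you single out as the relevant application (Corollary~\ref{corg=9}); it is precisely the case your sketch cannot reach.

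Moreover, your diagnosis of the dangerous configuration is off, so the plan you do give attacks the wrong target. Your ``extremal instance'' ($F$ of maximal slope and rank $2$ with $\gamma(F)=\Cl(C)$, quotient $M$ with $h^1(M)=0$ and $d_M=2g-1$) cannot occur: by Mukai--Sakai \cite{ms}, a rank-$2$ maximal-slope subbundle satisfies $\mu(M)-\mu(F)\le g$, i.e.\ $d_F\ge 2(d_M-g)$, so this instance would force $d=d_F+d_M\ge 4g-3>3g-3$, violating $\mu(E)\le g-1$. In fact the same bound gives $d_M\le\frac{2g+d}{3}$, hence (when $h^1(M)\le1$) $\gamma(M)\ge\frac{4g-d-6}{3}\ge 2$ for all $d\le 3g-3$ once $g\ge 9$; so in the situation where $F$ genuinely contributes to $\Cl_2(C)$, the conclusion $\gamma(M)\ge2$ you hope to force is essentially automatic, except in the low genera that need special treatment anyway. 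The configurations that actually generate the term $\frac{2\Cl(C)+1}{3}$ are different: a line subbundle of $E$ with $h^0\ge2$, where the estimate runs through $d_6$ and is improved in the paper via \eqref{dr}; a rank-$2$ subbundle with sections, handled via the improved bound $\gamma(E)\ge\frac{2t+g-1}{3}$; and a maximal-slope rank-$2$ subbundle with $h^0(F)\le 3$, where $\gamma(F)$ can only be bounded through its slope and one gets the term $\frac{d+2g-12}{9}$ of \eqref{eq3.1}. Forcing $\gamma(M)\ge2$ would therefore not close the argument even if you could do it; what is missing is exactly the content of Lemmas~\ref{lem3.2}--\ref{lem3.5} and Propositions~\ref{prop3.6}--\ref{prop3.8}.
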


 We may assume $\Cl(C) \geq 3$ by Lemma \ref{l2.1}. We use the proofs in Sections 2 and 3 of \cite{cl2} making necessary improvements and proceed by a sequence of lemmas and propositions.
We follow the argument of \cite{cl2}.
Suppose throughout that $E$ is a bundle computing $\Cl_3(C)$.

\begin{lem} \label{lem3.2}
If $E$ has a line subbundle $F$ with $h^0(F) \geq 2$ and $d \leq 2g+6$, then 
$$
\gamma(E) \geq \frac{2\Cl(C)+2}{3}.
$$
\end{lem}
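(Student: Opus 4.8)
The plan is to exploit the distinguished line subbundle via the exact sequence it determines, estimating $\gamma(E)$ by the contributions of the graded pieces. Write
\[
0 \ra F \ra E \ra Q \ra 0, \qquad \rk Q = 2,\quad d_Q = d - d_F.
\]
Since $h^0(E) \le h^0(F) + h^0(Q)$, substituting into the definition of $\gamma$ gives the additivity bound
\[
\gamma(E) \ge \tfrac{1}{3}\bigl(\gamma(F) + 2\gamma(Q)\bigr),
\]
and the target $\frac{2\Cl(C)+2}{3}$ will follow once I show $\gamma(F) \ge \Cl(C)$ and $\gamma(Q) \ge \frac{\Cl(C)+2}{2}$.

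First I would dispose of $\gamma(F)$. Semistability of $E$ forces $d_F \le \mu(E) = \frac{d}{3} \le \frac{2g+6}{3} < g$ for $g \ge 7$. If we had $h^1(F) \le 1$, then Riemann--Roch would give $h^0(F) = d_F - g + 1 + h^1(F) \le d_F - g + 2 < 2$, contradicting $h^0(F) \ge 2$. Hence $h^1(F) \ge 2$, so $F$ contributes to the classical Clifford index and $\gamma(F) \ge \Cl(C)$. This is precisely the step where the hypothesis $d \le 2g+6$ is genuinely used.

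The crux is the lower bound for $\gamma(Q)$. If $Q$ is semistable, $h^0(Q) \ge 4$ and $\mu(Q) \le g-1$, then $Q$ contributes to $\Cl_2(C)$, so $\gamma(Q) \ge \Cl_2(C) = \Cl(C) \ge \frac{\Cl(C)+2}{2}$ (the last step using $\Cl(C)\ge 3$); alternatively, if $Q$ is semistable of degree $\le 2g-2$ with a line subbundle having $\ge 2$ sections, Lemma \ref{lempr2} gives $\gamma(Q) \ge \Cl(C)$ directly. Otherwise I pass to a line subbundle $N \subset Q$ of maximal degree with quotient $P = Q/N$; the same section count gives $\gamma(Q) \ge \tfrac12(\gamma(N)+\gamma(P))$, reducing matters to the two line bundles $N$ and $P$. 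Here the preimage of $N$ is a rank-$2$ subbundle of $E$ and $P$ is a quotient of $E$, so semistability of $E$ controls $d_N$ and $d_P$; combining this with $d \le 2g+6$ and the gonality estimate \eqref{dr}, I would show that whenever $N$ or $P$ fails to contribute to $\Cl(C)$ its $\gamma$ is forced large enough that $\gamma(N)+\gamma(P) \ge \Cl(C)+2$ still holds.

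I expect this last step to be the main obstacle: unlike $F$, the pieces $N, P$ of the possibly unstable quotient $Q$ are not governed by a single clean inequality, and one must run through the subcases according to whether each of $N, P$ is special and has at least two sections, tracking the degree constraints coming from the semistability of $E$ together with the bound $d \le 2g+6$. Feeding the resulting estimates $\gamma(F) \ge \Cl(C)$ and $\gamma(Q) \ge \frac{\Cl(C)+2}{2}$ back into the additivity inequality then yields $\gamma(E) \ge \frac{2\Cl(C)+2}{3}$.
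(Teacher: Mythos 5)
Your reduction is fine as far as it goes: the additivity estimate $\gamma(E)\ge\frac13\bigl(\gamma(F)+2\gamma(Q)\bigr)$ is correct, the arithmetic target is right, and your treatment of $F$ is complete (semistability gives $d_F\le \frac{d}{3}\le\frac{2g+6}{3}<g$ for $g\ge7$, so $F$ contributes to $\Cl(C)$ and $\gamma(F)\ge\Cl(C)$). But the proof stops exactly where the content of the lemma begins. The estimate $\gamma(Q)\ge\frac{\Cl(C)+2}{2}$ for the rank-2 quotient is the whole point, and you do not establish it; you explicitly defer it (``I would show\dots'', ``I expect this last step to be the main obstacle''). Moreover, even your enumeration of cases is incomplete: you treat semistable $Q$ only when $h^0(Q)\ge4$ and $\mu(Q)\le g-1$, saying nothing about semistable $Q$ with $h^0(Q)\le3$ (where one must use $h^0(F)\ge3$, hence $d_F\ge d_2$, to force $d$ large enough), nor about $\mu(Q)>g-1$ (where one must pass to $K_C\otimes Q^*$ and split according to $h^1(Q)\ge4$ or $h^1(Q)\le3$). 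In the unstable case, the subcases where $N$ or $P$ is nonspecial need the refined inequalities $d_N\le\frac{2d}{3}-d_F$ (semistability of $E$ applied to the pullback of $N$), $d_P\ge\frac{d}{3}$ ($P$ is a quotient of $E$), \emph{and} $d_F\ge d_1\ge\Cl(C)+2$; without this last input the required bound actually fails numerically for $g=7$, so the case analysis is genuinely delicate rather than routine. As written, this is a plausible programme, not a proof.

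For comparison, the paper does not redo any of this: it quotes \cite[Lemma 2.2]{cl2}, which already contains precisely the case analysis you are attempting and yields $\gamma(E)\ge\min\left\{\frac{2\Cl(C)+1}{3},\frac13(4\Cl(C)+2g+2-d)\right\}$; it then improves case (i) of that proof via the gonality bound $d_6\ge\min\{\Cl(C)+12,g+5\}>\Cl(C)+7$ from \eqref{dr}, and finishes by checking that $\frac13(4\Cl(C)+2g+2-d)\ge\frac13(2\Cl(C)+2)$ under the hypotheses $\Cl(C)\ge3$ and $d\le2g+6$. If you want a self-contained argument along your lines, you must carry out the full case analysis for $Q$ (including the subcases listed above); until then there is a genuine gap.
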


\begin{proof}
By  \cite[Lemma 2.2]{cl2} we know that 
$$\gamma(E) \geq \min\left\{\frac{2\Cl(C) + 1}{3},\frac13(4\Cl(C)+2g+2-d)\right\}.$$ 
We need first to improve the estimate in case (i) in the proof of this lemma. We have by \eqref{dr}
$$d_6 \geq \min \{ \Cl(C) +12,g+5 \} > \Cl(C) + 7.
$$ So
\begin{eqnarray*}
\gamma(E) \geq \frac{\Cl(C)}{3} + \frac{d_6}{3} -2 & >  & \frac{2\Cl(C) + 1}{3}.
\end{eqnarray*}
It is therefore sufficient to show that 
$$
\frac{1}{3} ( 4 \Cl(C) + 2g +2 -d) \geq \frac{1}{3} ( 2 \Cl(C) + 2).
$$
This is true provided $\Cl(C) \geq 3$ and $d \leq 2g+6$.
\end{proof}

\begin{lem} \label{lem3.3}
If $E$ has a subbundle $F$ of rank $2$ with $h^0(F) \geq 3$ and no line subbundle with $h^0 \geq 2$, and $d \leq 2g+2$, then
$$
\gamma(E) \geq \frac{2\Cl(C)+2}{3} .
$$
\end{lem}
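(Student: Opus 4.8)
The plan is to work with a rank-two subbundle $F\subset E$; after replacing $F$ by its saturation we may assume that $L:=E/F$ is a line bundle, so that we have an exact sequence
\[
0\ra F\ra E\ra L\ra 0,\qquad d=d_F+d_L.
\]
Since $h^0(E)\le h^0(F)+h^0(L)$, substituting the degrees and dimensions into the definition of $\gamma$ yields the additivity estimate
\[
\gamma(E)\ \ge\ \frac{2\gamma(F)+\gamma(L)}{3}.
\]
It therefore suffices to prove $2\gamma(F)+\gamma(L)\ge 2\Cl(C)+2$, and I would deduce this from the two bounds $\gamma(F)\ge\frac{\Cl(C)+2}{2}$ and $\gamma(L)\ge\Cl(C)$.

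The hypothesis that $F$ has no line subbundle with $h^0\ge2$ is exactly the condition needed to apply Lemma \ref{pr} to $F$: writing $h^0(F)=2+s$ with $s\ge1$, it gives $d_F\ge d_{2s}$. Using the elementary identity $\gamma(F)=\frac{d_F}{2}-s$ together with \eqref{dr}, I would split according to which term attains the minimum in $d_{2s}\ge\min\{\Cl(C)+4s,\,g+2s-1\}$. The first possibility gives $\gamma(F)\ge\frac{\Cl(C)}{2}+s$ and the second $\gamma(F)\ge\frac{g-1}{2}\ge\Cl(C)$, so that $\gamma(F)\ge\frac{\Cl(C)+2}{2}$ in both cases (using $s\ge1$, $g\ge7$ and $\Cl(C)\ge3$). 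This is the step carrying the improvement: replacing the weaker gonality estimate of \cite{cl2} by the sharp inequality \eqref{dr} is what turns the bound $\frac{2\Cl(C)+1}{3}$ into $\frac{2\Cl(C)+2}{3}$, exactly as $d_6\ge\Cl(C)+12$ was used in place of $d_6>\Cl(C)+7$ in Lemma \ref{lem3.2}. I would also record the stronger estimate $\gamma(F)\ge\frac{\Cl(C)}{2}+s$ for later use.

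For $\gamma(L)$ I would use Serre duality in the form $\gamma(L)=\gamma(K_C\otimes L^*)$. If $h^0(L)\ge2$ with $d_L\le g-1$, or dually $h^1(L)\ge2$ with $d_L\ge g-1$, then $L$ or $K_C\otimes L^*$ contributes to $\Cl(C)$ and hence $\gamma(L)\ge\Cl(C)$, completing the proof. The remaining configurations are those in which neither $L$ nor its Serre dual contributes, namely (by a Riemann--Roch count) $h^0(L)\le1$ with $d_L\le g-1$, or dually $h^1(L)\le1$ with $d_L\ge g-1$. In the first of these, $h^0(E)\ge6$ and $h^0(E)\le h^0(F)+h^0(L)$ force $s\ge3$, which feeds back into the stronger bound $\gamma(F)\ge\frac{\Cl(C)}{2}+s$; moreover $\gamma(L)\ge d_L$, which I would bound below by combining the semistability inequality $d_F\le 2d_L$ (from $\mu(F)\le\mu(E)$) with $d_F\ge d_{2s}$, and then use the degree hypothesis $d\le 2g+2$. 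The dual range $d_L\ge g-1$, $h^1(L)\le1$ is handled symmetrically.

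I expect this last case to be the main obstacle, because the additivity estimate loses just enough to leave open the possibility of a small $\gamma(L)$ accompanied by only moderately large $s$. Ruling this out amounts to showing that a quotient line bundle with $\gamma(L)<\Cl(C)$ cannot coexist with a subbundle $F$ of too small degree, and this needs the full strength of the interplay between Lemma \ref{pr}, the restrictions on the gonality sequence encoded in \eqref{dr}, and the bound $d\le 2g+2$. Concretely I would follow the case division in the proof of the corresponding bound in \cite{cl2}, systematically replacing each appeal to the earlier gonality estimate by \eqref{dr}, and then verify that every degree-dependent term in the resulting minimum exceeds $\frac{2\Cl(C)+2}{3}$ once $d\le 2g+2$ is imposed.
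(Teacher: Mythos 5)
Your skeleton is sound as far as it goes, and it is essentially a reconstruction of the argument that the paper imports wholesale: the paper's own proof simply cites \cite[Lemma 2.3]{cl2} and improves the single estimate \cite[formula (2.3)]{cl2} to $\gamma(E)\ge\frac{2t+g-1}{3}\ge\frac{2\Cl(C)+2}{3}$, which is exactly the replacement of the old gonality estimate by \eqref{dr} that you perform for $\gamma(F)$. Two of your three cases do close with your ingredients: when $L$ or $K_C\otimes L^*$ contributes to $\Cl(C)$ you get $3\gamma(E)\ge(\Cl(C)+2)+\Cl(C)$, and in the dual range $d_L\ge g-1$, $h^1(L)\le1$ one has $\gamma(L)\ge 2g-2-d_L\ge d_F-4\ge\Cl(C)+4s-4$ and $2\gamma(F)\ge\Cl(C)+2s$, whence $3\gamma(E)\ge 2\Cl(C)+6s-4\ge 2\Cl(C)+2$; this is where $d\le 2g+2$ is genuinely used (it is \cite[formula (2.4)]{cl2}).

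The genuine gap is the case $h^0(L)\le 1$ (so $s\ge3$), and it cannot be closed with the tools you list. There your inequalities amount to $3\gamma(E)\ge\frac32 d_F-2s$ with $d_F\ge d_{2s}\ge\min\{\Cl(C)+4s,\,g+2s-1\}$; when the minimum is the first term this yields only $3\gamma(E)\ge\frac32\Cl(C)+4s$, which gives $2\Cl(C)+2$ precisely when $8s\ge\Cl(C)+4$ --- false for $s=3$ once $\Cl(C)\ge22$. The configuration $s=3$, $d_F=\Cl(C)+12$, $d_L=\lceil d_F/2\rceil$ is consistent with every inequality you invoke (note that $d\le2g+2$ gives no help here, since $d$ is small in this case), so no rearrangement of your estimates can finish. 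The missing ingredient is the standing hypothesis of Theorem \ref{thm3.1}, namely $\Cl_2(C)=\Cl(C)$, which you never use; the lemma sits inside the proof of that theorem and inherits it, and without it one should not expect the statement (compare Section \ref{minimal}, where $\Cl_2(C)<\Cl(C)$ and only weaker bounds hold). With it the case closes: if $F$ is semistable, then $\mu(F)\le\mu(E)\le g-1$ and $h^0(F)\ge5$ show that $F$ contributes to $\Cl_2(C)$, so $\gamma(F)\ge\Cl_2(C)=\Cl(C)$ and $3\gamma(E)\ge2\Cl(C)+\gamma(L)\ge2\Cl(C)+d_L>2\Cl(C)+2$; if $F$ is unstable, its destabilising line subbundle $M$ has $h^0(M)\le1$, so the quotient line bundle $F/M$ has $h^0\ge s+1\ge4$ and degree $<\frac{d_F}{2}\le g-1$, hence $d_{F/M}\ge\Cl(C)+2s$ and $d_F>2d_{F/M}\ge2\Cl(C)+4s$, which again gives $3\gamma(E)>2\Cl(C)+2$. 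This dichotomy is precisely the content of the term $\Cl_2(C)$ (\cite[formula (2.2)]{cl2}) appearing in \cite[Proposition 2.4]{cl2}, which the paper's citation of \cite[Lemma 2.3]{cl2} imports silently and which your blind reconstruction omits.
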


\begin{proof}
We use \cite[Lemma 2.3]{cl2}. We need only to note that the estimate \cite[formula (2.3)]{cl2} can be improved to give the required result. For this improvement, observe that 
$$\gamma(E)\ge\frac{2t+g-1}3\ge\frac{2\Cl(C)+2}3$$
since $t=h^0(F)-2\ge1$.

\end{proof}

\begin{lem} \label{lem3.4}
Suppose that $E$ has a proper subbundle of maximal slope and rank $1$, and $d \geq 2g+4$. Then
$$
\gamma(E) \geq \frac{2\Cl(C)+2}{3}.
$$
\end{lem}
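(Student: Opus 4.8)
The plan is to exploit the exact sequence determined by the maximal-slope line subbundle and to bound $\gamma(E)$ from below by playing the contribution of the subbundle against that of the quotient. Since $E$ has rank $3$ and $L\subset E$ is a line subbundle of maximal slope, write
$$
0\ra L\ra E\ra Q\ra 0
$$
with $Q$ of rank $2$. Because $E$ computes $\Cl_3(C)$ it is semistable, so $\mu(L)\le\mu(E)\le\mu(Q)$; maximality of $\mu(L)$ among line subbundles will be the lever for bounding degrees. I would first split into cases according to $h^0(L)$, exactly as in the proof of Proposition \ref{prop2.4} and of \cite[Lemma 3.1]{cl2}, since the bound we are after, $\frac{2\Cl(C)+2}{3}$, is a sharpening of the $\frac{2\Cl(C)+1}{3}$ that appears there.

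If $h^0(L)\ge 2$, then $E$ has a line subbundle with two sections and, since $d\ge 2g+4$ forces $d\le 2g+6$ only in a narrow range, one should instead argue directly: $\gamma(L)\ge\Cl(C)$ and, using the restricted-dual trick via \cite{ms} on $Q$ (as in Proposition \ref{prop2.8}, where $\gamma(Q)=2g-d_Q$ when $h^1(Q)=0$), one adds the contributions with weights $\frac13$ and $\frac23$ to reach $\frac{2\Cl(C)+2}{3}$. The large-degree hypothesis $d\ge 2g+4$ is precisely what makes the quotient term $2g-d_Q$ favourable, so this case should be the easy one. The substantive case is $h^0(L)\le 1$. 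Here I would follow Proposition \ref{prop2.4} verbatim: passing to a maximal line subbundle $M$ of $Q$, the two applications of \cite{ms} give $\mu(Q)-\mu(L)\le g$ and $\mu(Q/M)-\mu(M)\le g$, and maximality of $\mu(L)$ in $E$ gives $d_M\le d_L$. Combining yields $3d_L\ge d-g$, which improves the naive estimate and replaces $\frac{d-2g}{3}$ by $\frac{d-g}{3}$ in the relevant formula of \cite{cl2}.

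The final step is arithmetic: one must check that with $3d_L\ge d-g$ in hand, the resulting lower bound exceeds $\frac{2\Cl(C)+2}{3}$ under the hypothesis $d\ge 2g+4$ (and using $d\le 3g-3$ together with $\Cl(C)\ge 3$). Concretely, the quotient term contributes something of size $\frac{1}{3}\!\left(\text{const}+2g-\frac{d-g}{3}\right)$ after dualising, and one balances this against the subbundle's contribution; the inequality $d\ge 2g+4$ should be exactly the threshold that tips the minimum onto the $\frac{2\Cl(C)+2}{3}$ side rather than a degree-dependent term.

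\textbf{Main obstacle.} The delicate point is not any single estimate but keeping track of which term in the minimum is actually active once the bound $\frac{2\Cl(C)+1}{3}$ from \cite[Lemma 3.1]{cl2} is upgraded to $\frac{2\Cl(C)+2}{3}$. The improvement of one unit in the numerator is genuinely tight, so I expect the real work to be verifying that the gonality-based term $\frac{d_6}{3}$ (via \eqref{dr}, giving $d_6\ge\Cl(C)+12$ or $g+5$) strictly clears the new bound, and simultaneously that the degree hypothesis $d\ge 2g+4$ is sharp enough to control the quotient contribution without slack. In other words, the obstacle is ensuring every one of the several competing lower bounds individually beats $\frac{2\Cl(C)+2}{3}$, rather than merely beating $\frac{2\Cl(C)+1}{3}$, and this will come down to checking the boundary cases in $g$ and $d$ carefully.
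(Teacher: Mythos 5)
You have missed the paper's actual proof, which is one line long: by \cite[Lemma 3.1]{cl2}, $\gamma(E)$ is bounded below by the minimum of the three quantities $\frac{\Cl(C)+2\Cl_2(C)}{3}$, $\frac{2\Cl_2(C)}{3}+\frac{d-2g}{9}$ and $\frac{\Cl(C)}{3}+\frac{2d-6}{9}$; under the standing hypotheses $\Cl_2(C)=\Cl(C)\ge 3$, $d\ge 2g+4$, $d\le 3g-3$ and $\Cl(C)\le\frac{g-1}{2}$, each of these is strictly greater than $\frac{2\Cl(C)+1}{3}$ (the critical middle term is at least $\frac{2\Cl(C)}{3}+\frac{4}{9}$), and since $3\gamma(E)=3\Cl_3(C)=d-2h^0(E)+6$ is an integer, the bound rounds up to $\frac{2\Cl(C)+2}{3}$. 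This integrality observation is the entire content of the paper's argument and appears nowhere in your proposal. Your substitute for it, importing the refinement $3d_L\ge d-g$ from Proposition \ref{prop2.4}, is legitimate here (it uses only the maximality of $L$ among \emph{all} proper subbundles) and does dispose of the critical term: $d\ge 2g+4$ gives $d_L\ge\frac{g+4}{3}$, whence $\frac{2\Cl_2(C)}{3}+\frac{d_L}{3}\ge\frac{2\Cl(C)}{3}+\frac{g+4}{9}\ge\frac{2\Cl(C)+2}{3}$ for $g\ge 2$. So your route is genuinely different and completable; what it buys is avoiding the rounding argument, at the cost of reopening the case analysis of \cite[Lemma 3.1]{cl2}.

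Two parts of the sketch are, however, wrong as written. First, in the case $h^0(L)\ge 2$ your reasoning is backwards: the hypothesis $d\ge 2g+4$ does \emph{not} make the quotient term favourable. For the rank-$2$ quotient $Q$ with $h^1(Q)=0$ one gets $\gamma(Q)=2g-\frac{d_Q}{2}$ (not $2g-d_Q$, which is the rank-$1$ formula of Proposition \ref{prop2.8}), and this \emph{decreases} as $d$ grows; the case survives because $d\le 3g-3$ and $\Cl(C)\le\frac{g-1}{2}$, not because $d$ is large. You also leave untreated the sub-cases in which neither $Q$ nor $K_C\otimes Q^*$ contributes to $\Cl_2(C)$ (for instance $\mu(Q)>g-1$ with $1\le h^1(Q)\le 3$), which need separate Riemann--Roch estimates. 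Second, your ``main obstacle'' is misidentified: the gonality term involving $d_6$ comes from \cite[Lemma 2.2]{cl2}, i.e.\ it belongs to the proof of Lemma \ref{lem3.2} of this paper, which applies when $d\le 2g+6$; it does not occur in \cite[Lemma 3.1]{cl2} at all, and once either integrality or your degree refinement is in place there is no delicate boundary check of the kind you anticipate -- every term clears the bound with room to spare.
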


\begin{proof}
This is an immediate consequence of \cite[Lemma 3.1]{cl2}, since $3\Cl_3(C)$ is an integer.
\end{proof}

\begin{lem} \label{lem3.5}
Suppose that $g = 8$ or $g \geq 10$ and that $E$ has a proper subbundle of maximal slope and rank $2$, and $d \geq 2g+3$. Then
\begin{equation*} 
\gamma(E) \geq \frac{2\Cl(C)+2}{3}.
\end{equation*}
\end{lem}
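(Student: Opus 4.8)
The plan is to feed our situation directly into the lower bound of \cite[Lemma 3.2]{cl2}, which is exactly the estimate valid when $E$ has a proper subbundle of maximal slope and rank $2$, and then to exploit the integrality of $3\Cl_3(C)$ in the same way as in the proof of Lemma \ref{lem3.4}. Specialising that bound to our hypothesis $\Cl_2(C)=\Cl(C)$ gives
$$
\gamma(E)\ge\min\left\{\Cl(C),\ \frac{\Cl(C)}{3}+\frac{2d-2g-6}{9},\ \frac{2\Cl(C)}{3}+\frac{d}{9},\ \frac{2\Cl(C)}{3}+\frac{4g-d-6}{9},\ \frac{d+2g-12}{9}\right\},
$$
where the first entry is $\frac{\Cl(C)+2\Cl_2(C)}{3}$ with $\Cl_2(C)=\Cl(C)$. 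Since $3\gamma(E)=3\Cl_3(C)$ and $2\Cl(C)+1$ are both integers, the key reduction is that it suffices to prove each of the five entries, after multiplication by $3$, is \emph{strictly} greater than $2\Cl(C)+1$: the minimum is then $>2\Cl(C)+1$, and integrality forces $3\gamma(E)\ge 2\Cl(C)+2$, which is the claim.

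First I would clear the three easy entries. After tripling, the first is $3\Cl(C)>2\Cl(C)+1$ since $\Cl(C)\ge3$; the third is $2\Cl(C)+\tfrac{d}{3}>2\Cl(C)+1$ since $d\ge 2g+3$; and the fourth is $2\Cl(C)+\tfrac{4g-d-6}{3}$, which exceeds $2\Cl(C)+1$ iff $d<4g-9$, and this holds because $d\le 3g-3<4g-9$ for $g\ge7$. This fourth entry is precisely where the integrality trick earns its keep: the quantity $\frac{2\Cl(C)}{3}+\frac{4g-d-6}{9}$ can fall below $\frac{2\Cl(C)+2}{3}$ when $d$ is near $3g-3$ (for instance $g=8$, $d=21$ gives $\frac{2\Cl(C)}{3}+\frac59$), so a naive comparison against $\frac{2\Cl(C)+2}{3}$ would fail; only the weaker strict inequality against $\frac{2\Cl(C)+1}{3}$, rounded up, survives.

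The two remaining entries are where the restriction on $g$ enters, and they are the heart of the argument. Tripling the second entry gives $\Cl(C)+\frac{2d-2g-6}{3}>2\Cl(C)+1$, i.e. $2d-2g>3\Cl(C)+9$; in the extreme case $d=2g+3$ this reads $2g-3>3\Cl(C)$, which follows from $\Cl(C)\le[\frac{g-1}{2}]$ for $g=8$ and for all $g\ge10$. Tripling the fifth entry gives $\frac{d+2g-12}{3}>2\Cl(C)+1$, i.e. $d>6\Cl(C)-2g+15$; in the extreme case $d=2g+3$ this reads $2g-6>3\Cl(C)$. The hard part is exactly this last inequality: using only $\Cl(C)\le[\frac{g-1}{2}]$, it holds for $g=8$ (where $\Cl(C)\le3$ and $2g-6=10$) and for $g\ge10$ (where $\tfrac{3(g-1)}{2}<2g-6$), but it \emph{fails} at $g=9$ with $\Cl(C)=4$, giving equality $2g-6=3\Cl(C)=12$ rather than strict inequality. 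This boundary phenomenon is precisely why $g=9$ (and likewise $g=7$) must be excluded, and verifying strictness at the extreme value $d=2g+3$ for the permitted $g$ is the single computation I would carry out in full.
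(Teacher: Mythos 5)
Your proposal is correct and follows essentially the same route as the paper: both invoke \cite[Lemma 3.2]{cl2} with $\Cl_2(C)=\Cl(C)$, reduce to showing each entry of the minimum strictly exceeds $\frac{2\Cl(C)+1}{3}$, and then use integrality of $3\gamma(E)$ to round up to $\frac{2\Cl(C)+2}{3}$ (the paper singles out the entry $\frac{d+2g-12}{9}$ as the only one "to check", which is exactly the binding inequality \eqref{eq3.1} you identified as failing at $g=7$ and $g=9$). Your explicit verification of the remaining entries, including the observation that the fourth entry also needs the integrality trick when $d$ is close to $3g-3$, simply fills in details the paper leaves implicit.
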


\begin{proof}
We use \cite[Lemma 3.2]{cl2}. We need to check that 
\begin{equation} \label{eq3.1}
\frac{d + 2g -12}{9} > \frac{2\Cl(C)+1}{3}.
\end{equation}
This holds for $d \geq 2g +3$ if $g =8$ or $g \geq 10$.
\end{proof}

\begin{prop} \label{prop3.6}
Let $C$ be a curve of genus $g =8$ or $g \geq 10$ such that $\Cl_2(C) = \Cl(C) \geq 3$ and let $E$ be a bundle computing $\Cl_3(C)$. Then 
$$
\gamma(E)  \geq \min \left\{ \frac{d_9}{3} -2, \frac{2\Cl(C) +2}{3} \right\}.
$$
\end{prop}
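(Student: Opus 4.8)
The plan is to assemble Proposition \ref{prop3.6} from the sequence of lemmas just proved, treating it as a case analysis on the structure of a maximal-slope subbundle of $E$. Let $E$ be a bundle computing $\Cl_3(C)$, so $E$ is semistable of rank $3$ with $h^0(E)\ge6$ and $\mu(E)\le g-1$, giving $d\le 3g-3$. The goal is to show $\gamma(E)\ge\min\{\frac{d_9}{3}-2,\frac{2\Cl(C)+2}{3}\}$ in all cases. The two numerical regimes that the lemmas cover are $d$ small and $d$ large, so first I would record where the boundary lies: Lemmas \ref{lem3.2} and \ref{lem3.3} handle degrees $d\le 2g+6$ and $d\le 2g+2$ respectively, while Lemmas \ref{lem3.4} and \ref{lem3.5} handle $d\ge 2g+4$ and $d\ge 2g+3$. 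Thus the degree ranges overlap, and every admissible $d$ falls into at least one of the ``small'' or ``large'' windows.

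The case analysis proceeds according to the maximal-slope subbundle $F$ of $E$. If $E$ has a line subbundle with $h^0\ge2$, then for $d\le 2g+6$ Lemma \ref{lem3.2} gives the bound directly; since $\Cl(C)\ge3$ this is the relevant subcase. If $E$ has no line subbundle with $h^0\ge2$ but has a rank-$2$ subbundle $F$ with $h^0(F)\ge3$, then for $d\le 2g+2$ Lemma \ref{lem3.3} applies. If the maximal-slope proper subbundle has rank $1$, then for $d\ge 2g+4$ Lemma \ref{lem3.4} applies, and if it has rank $2$, then for $d\ge 2g+3$ Lemma \ref{lem3.5} applies (using the hypothesis $g=8$ or $g\ge10$). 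The remaining possibility is that $E$ has a maximal-slope subbundle with no sections forcing the Clifford-index bound, i.e.\ no proper subbundle $F$ with $h^0(F)\ge\rk F+1$; in that case Lemma \ref{pr} of Paranjape--Ramanan gives $d\ge d_9$ (taking $n=3$, $s=h^0(E)-3\ge3$), and then a direct computation of $\gamma(E)=\mu(E)-\frac{2h^0(E)}{3}+2$ yields $\gamma(E)\ge\frac{d_9}{3}-2$, the first term in the minimum.

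The work of the proof is therefore to verify that these subcases, organized by the rank of the maximal-slope subbundle, genuinely exhaust all bundles $E$ for every admissible degree $d$. The key structural point is that a semistable $E$ of rank $3$ has a maximal-slope proper subbundle of rank $1$ or $2$, and the sections-based dichotomy (whether $E$ has a line subbundle with $h^0\ge2$, or a rank-$2$ subbundle with $h^0\ge3$, or neither) combines with the degree thresholds so that whatever $F$ is, one of the five lemmas applies. I would check carefully that for each rank of $F$ the ``small-$d$'' lemma and the ``large-$d$'' lemma between them cover all of $0\le d\le 3g-3$: for a line subbundle the windows $d\le2g+6$ and $d\ge2g+4$ overlap, and for a rank-$2$ subbundle the windows $d\le2g+2$ and $d\ge2g+3$ are contiguous.

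The main obstacle will be confirming the consistency of the sections hypotheses across the threshold, in particular that when Lemma \ref{lem3.3} (which requires $E$ to have no line subbundle with $h^0\ge2$) fails to apply because $d>2g+2$, the maximal-slope subbundle is then governed by one of the high-degree Lemmas \ref{lem3.4} or \ref{lem3.5} without an extra gap appearing. One must ensure that the ``no proper subbundle with $h^0(F)\ge\rk F+1$'' case is correctly separated out so that the Paranjape--Ramanan bound can be invoked precisely when none of the sections-based hypotheses of the first two lemmas hold; this bookkeeping, rather than any new estimate, is where care is needed. The genus restriction $g=8$ or $g\ge10$ enters only through Lemma \ref{lem3.5}, and I would note that the excluded value $g=9$ is treated separately elsewhere in the paper.
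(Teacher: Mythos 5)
There is a genuine gap, and it sits exactly at $d=2g+3$. Your covering argument pairs the lemmas as if they were indexed by the same dichotomy, but they are not: Lemmas \ref{lem3.2} and \ref{lem3.3} are triggered by the existence of a subbundle with many sections (a line subbundle with $h^0\ge2$, resp.\ a rank-$2$ subbundle with $h^0\ge3$), whereas Lemmas \ref{lem3.4} and \ref{lem3.5} are triggered by the rank of a \emph{maximal-slope} subbundle. A semistable $E$ of degree $d=2g+3$ can have no line subbundle with $h^0\ge2$, possess a rank-$2$ subbundle $F$ with $h^0(F)\ge3$, and yet have its maximal-slope proper subbundle of rank $1$ (with $h^0\le1$). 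For such a bundle none of the four lemmas applies: Lemma \ref{lem3.3} fails because $d>2g+2$, Lemma \ref{lem3.4} fails because $d<2g+4$, and Lemmas \ref{lem3.2} and \ref{lem3.5} fail because their structural hypotheses do not hold. So the windows are \emph{not} contiguous once the hypotheses are matched correctly; your statement that only ``bookkeeping'' is needed is where the proof breaks down.

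The paper closes this hole with a dedicated argument that is more than bookkeeping. At $d=2g+3$, assuming $\gamma(E)\le\frac{2\Cl(C)+1}{3}$: Lemma \ref{lem3.2} rules out line subbundles with $h^0\ge2$; any rank-$2$ subbundle $F$ with $h^0(F)\ge3$ then has a line subbundle $L'$ with $h^0(L')\le1$, whence $h^0(F/L')\ge2$, $d_{F/L'}\ge d_1>2$ and so $\mu(F)>1$; Lemma \ref{lem3.5} (this is where $g=8$ or $g\ge10$ is used) forces every maximal-slope subbundle to be a line bundle $L$; and finally one must go \emph{inside} the proof of \cite[Lemma 3.1]{cl2} to see that equality $\gamma(E)=\frac{2\Cl(C)+1}{3}$ forces equality in \cite[formula (3.4)]{cl2}, i.e.\ $d_L=1$. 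This contradicts maximality of $L$, since $\mu(F)>1$. Without this last step --- which uses the internal structure of the cited proof, not just the statements of Lemmas \ref{lem3.2}--\ref{lem3.5} --- the proposition is not proved. Your treatment of the Paranjape--Ramanan case and of the ranges $d\le2g+2$ and $d\ge2g+4$ agrees with the paper and is fine; only the boundary degree is missing, but it is the essential content of the paper's proof.
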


\noindent
\begin{proof}
If $E$ does not possess a proper subbundle $F$ with $h^0(F) \geq \rk F +1$, then 
$$
\gamma(E) \geq \frac{d_9}{3} -2
$$ 
by Lemma \ref{pr}. So suppose $E$ does have such a subbundle and
$$
\gamma(E) \leq \frac{2\Cl(C) + 1}{3}.
$$ 
This gives a contradiction by Lemmas \ref{lem3.2} and \ref{lem3.3} if $d \leq 2g+2$ and by Lemmas \ref{lem3.4} and \ref{lem3.5} if $d \geq 2g +4$.

If $d = 2g+3$, then Lemma \ref{lem3.2} implies that $E$ has no line subbundle with $h^0 \geq 2$. Let $F$ be a subbundle of rank 2 with $h^0(F) \geq 3$. Then $F$ possesses a line subbundle $L$ with $h^0(L) = 1$, so $h^0(F/L) \geq 2$ and hence $d_{F/L} \geq d_1 > 2$. This implies $\mu(F) > 1$.

By Lemma \ref{lem3.5} all proper subbundles of $E$ of maximal slope are line bundles. Choose such a line bundle $L$ and consider the proof of Lemma \ref{lem3.4}, i.e. of \cite[Lemma 3.1]{cl2}. In order to get $\gamma(E) = \frac{2\Cl(C) + 1}{3}$, we must have equality in \cite[formula (3.4)]{cl2}, i.e. $d_L = 1$. So $L$ is not of maximal slope, a contradiction.
\end{proof}

The cases $g = 7$ and $g = 9$ require further arguments, because \eqref{eq3.1} can fail.

\begin{prop} \label{prop3.7}
Let $C$ be a curve of genus $g = 7$ with $\Cl(C) = 3$ and $E$ a bundle computing $\Cl_3(C)$. Then 
$$ 
\gamma(E) \geq \frac{8}{3}.
$$
\end{prop}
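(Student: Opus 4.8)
The plan is to assume that $E$ computes $\Cl_3(C)$ with $\gamma(E)<\frac83$ and to derive a contradiction. Since $\Cl_2(C)=\Cl(C)=3$, the lower bound recalled at the start of Section~\ref{improved} reads $\gamma(E)\ge\min\{\frac{d_9}3-2,\frac73,\frac83\}$, and $\frac{d_9}3-2\ge3$ by \eqref{dr} (as $d_9\ge g+8=15$); hence $\gamma(E)\ge\frac73$. The number $3\gamma(E)=d-2(h^0(E)-3)$ is an integer with $3\gamma(E)\equiv d\pmod2$, so whenever $d$ is even the inequality $3\gamma(E)\ge7$ already forces $3\gamma(E)\ge8$, i.e. $\gamma(E)\ge\frac83$. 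I may therefore assume $d$ is odd and $3\gamma(E)=7$, so $\gamma(E)=\frac73$ and $h^0(E)=\frac{d-1}2$.

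I would then follow the reduction in the proof of Proposition~\ref{prop3.6}. If $E$ has no proper subbundle $F$ with $h^0(F)>\rk F$, Lemma~\ref{pr} gives $\gamma(E)\ge\frac{d_9}3-2\ge3$, a contradiction; so such an $F$ exists. Because $d\le3g-3=18\le2g+6$, Lemma~\ref{lem3.2} rules out a line subbundle with $h^0\ge2$, so $F$ has rank~$2$ and $h^0(F)\ge3$, and then Lemma~\ref{lem3.3} settles every $d\le2g+2=16$. Thus $d\in\{17,18\}$; the even value $d=18$ was disposed of by the parity remark above, and only $d=17$ (so $h^0(E)=8$) remains.

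Let $G$ be a maximal-slope proper subbundle. If $\rk G=1$, then $h^0(G)\le1$ by Lemma~\ref{lem3.2}, and I would rerun \cite[Lemma 3.1]{cl2} with the sharpening from the proof of Proposition~\ref{prop2.4}: bounding a maximal line subbundle of the rank-$2$ quotient via \cite{ms} replaces $\frac{d-2g}3$ by $\frac{d-g}3$, giving $\gamma(E)\ge\frac{2\Cl_2(C)}3+\frac{d-g}9=\frac{28}9>\frac83$, a contradiction. If $\rk G=2$, then $G$ is semistable (a destabilising line subbundle would contradict maximality of $G$), $Q:=E/G$ is a line bundle, and $\gamma(E)\ge\frac{2\gamma(G)+\gamma(Q)}3$. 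Maximality of $G$ together with Lemma~\ref{pr} applied to $F$ gives $d_G\ge d_2\ge7$; also $d_Q\le10<2g-2$, so $\gamma(Q)\ge0$ by Clifford, with $\gamma(Q)\ge\Cl(C)=3$ once $h^0(Q)\ge2$ and $h^1(Q)\ge2$. A short case analysis on $h^0(G)$—using $d_G\ge d_4\ge10$ when $h^0(G)\ge4$—shows $2\gamma(G)+\gamma(Q)\ge8$ in every case except $h^0(G)=3$, $d_G=7$, $h^1(Q)=1$, which forces $d_2=7$, $d_Q=10$, $h^0(Q)=5$ and $\gamma(Q)=2$. In particular, if $d_2>7$ the proof is complete.

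The remaining boundary configuration is, I expect, the main obstacle. Here $Q\cong K_C(-D)$ with $\deg D=2$ and $h^0(D)=1$, while $G$ is stable by Lemma~\ref{l2.4} (as $d_2<2d_1$) and, as in the proof of Proposition~\ref{p2.5}, $G\cong E_M$ with $M=\det G$ computing $d_2=7$. Since $\Cl_3(C)=\frac73<\Cl(C)$, Proposition~\ref{newprop} forces $E$ to be stable, so $E$ is not the split extension $G\oplus Q$. On the other hand $h^0(E)=8=h^0(G)+h^0(Q)$ means all sections of $Q$ lift, so $E$ is a non-split extension lying in the kernel of $H^1(Q^*\otimes G)\to\Hom(H^0(Q),H^1(G))$; exactly as in Proposition~\ref{p2.5} this kernel is non-zero if and only if $h^0(G\otimes E_Q)>h^0(G)h^0(Q)=15$. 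The crux is therefore to prove $h^0(G\otimes E_Q)\le15$: this would leave only the split extension, contradicting stability of $E$ and finishing the proof. I expect this cohomological inequality to be the hard step; it should follow from the explicit shapes $G=E_M$, $Q=K_C(-D)$ (and, on the Petri curves where $d_2=7$ occurs, from the Petri condition), and if a direct computation proves awkward one can instead try to manufacture from the lifted sections a subbundle of slope $>\frac72$, contradicting the maximality of $G$.
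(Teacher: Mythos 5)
Your reduction is correct and runs parallel to the paper's up to the decisive point: the parity observation (for $d$ even, $3\gamma(E)\equiv d \pmod 2$ forces $3\gamma(E)\ge 8$ once $\gamma(E)\ge\tfrac73$) is a clean substitute for the paper's citation of \cite[Proposition 3.3]{cl2} in the case $d=18$, and your case analysis correctly isolates the one dangerous configuration: $d=17$, $h^0(E)=8$, a maximal-slope subbundle $G$ of rank $2$ with $d_G=d_2=7$, $h^0(G)=3$, $d_{E/G}=10$, $h^0(E/G)=5$, and all sections of $E/G$ lifting. But exactly at this configuration---which is where the entire content of the proposition lies when $d_2=7$---your proof stops. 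The inequality $h^0(G\otimes E_Q)\le 15$ is not proved; you flag it yourself as the hard step and offer only hopes (Petri, the explicit shapes $G=E_M$, $Q=K_C(-D)$) and an undeveloped fallback. Worse, nothing guarantees that this inequality is even true: your implication runs only one way (a semistable $E$ with $\gamma(E)=\tfrac73$ yields a nonzero kernel class, hence $h^0(G\otimes E_Q)>15$), whereas a nonzero kernel class yields a non-split extension that need not be semistable, so the truth of Proposition \ref{prop3.7} is perfectly compatible with $h^0(G\otimes E_Q)>15$. You may thus be attempting to prove a false statement, and in any case you have no argument for it. (A minor further point: invoking Lemma \ref{l2.4} for the stability of $G$ is circular, since that lemma concerns bundles already known to be of the form $E_M$; stability does follow directly because any line subbundle of $G$ has $h^0\le 1$, so the quotient line bundle has $h^0\ge2$ and degree $\ge d_1\ge 5$.)

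The paper closes this case by a different mechanism, and this is the idea your proposal is missing. Since $h^0(E)=8$ and $g=7$, $E$ has a section vanishing on a divisor of degree $2$, hence a line subbundle $L$ with $d_L\ge 2$ and (as $E$ has no line subbundle with $h^0\ge2$) $h^0(L)=1$. Because the maximal slope of a proper subbundle is $\mu(G)=\tfrac72$, every line subbundle $M$ of $E/L$ satisfies $d_M+d_L\le 7$, hence $d_M\le 7-d_L<\tfrac{17-d_L}{2}$, so the rank-$2$ quotient $E/L$ is stable. Then $h^0(E/L)\ge h^0(E)-1=7$ and $h^1(E/L)\ge 7+12-d_{E/L}\ge 4$, so either $E/L$ or $K_C\otimes(E/L)^*$ contributes to $\Cl_2(C)=3$, which forces $h^0(E/L)\le \tfrac{d_{E/L}}{2}-1\le\tfrac{13}{2}$, a contradiction. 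In other words, the resolution is to pass to the quotient by a line subbundle and invoke the rank-$2$ Clifford index, not to analyse the extension $0\ra G\ra E\ra Q\ra 0$ cohomologically; replacing your final step by this argument would complete the proof.
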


\begin{proof}
Recall that $\Cl_2(C) =3$ by \cite[Proposition 3.8]{cliff}. Moreover $d_9 = 16$. So $\frac{d_9}{3}-2 > 3$.

Note that $d \leq 3g-3 =18$. The proof of the theorem works for $d \leq 2g+2 =16$. So we are left with the cases $d = 17$ and 18.

If $d = 18$, \cite[formula (2.4)]{cl2} gives $\gamma(E) \geq \frac{2\Cl_2(C)}{3} = 2$. Moreover, $\gamma(E) \geq \frac{8}{3}$ unless $h^0(E) = 9$, in which case $\gamma(E) =2$. This contradicts \cite[Proposition 3.3]{cl2}.

If $d = 17$, we can assume that $E$ has no line subbundle with $h^0 \geq 2$ by Lemma \ref{lem3.2}. The only case in which we can have $\gamma(E) = \frac{2\Cl(C) + 1}{3}$ is when \cite[formula (2.4)]{cl2} is an equality. This implies that
$E$ fits into an exact sequence 
$$
0 \ra F \ra E \ra E/F \ra 0
$$
with $F$ of rank $2$ and degree $d_2=7$ with $h^0(F) = 3, \; d_{E/F} = 10$, $h^0(E/F) = 5$ and all sections of $E/F$ lift to $E$.

Since $h^0(E) =8$, there exists a line subbundle $L \subset E, \; d_L \geq 2$ and $h^0(L) = 1$. This cannot be a subbundle of maximal slope, since this would require equality in \cite[formula (3.4)]{cl2}, which means $d_L = 1$. So there exists a subbundle $G$ of maximal slope with rank 2. If $d_G \geq 8$, then by the proof of \cite[Lemma 3.2]{cl2}, $\gamma(G) \geq 3$ and also $\gamma(E/G) \geq 3$. So $\gamma(E) \geq 3$, a contradiction.  Hence $F$ is a subbundle of maximal slope. 

Now $d_{E/L} = 17 -d_L$. If $M$ is a subbundle of $E/L$, the pullback to $E$ has degree $d_M + d_L \leq 7$. So 
$$
d_M \leq 7 - d_L < \frac{17 -d_L}{2}
$$
and $E/L$ is stable.

Note that $h^0(E/L) \geq 7$, so $h^1(E/L) \geq 7 + 12 - d_{E/L} \geq 4$. Hence either $E/L$ or $K \otimes (E/L)^*$ contributes to $\Cl_2(C)$. Since $\Cl_2(C) = 3$, this gives 
$d_{E/L} - 2(h^0(E/L) -2) \geq 6$, i.e.
$$
h^0(E/L) \leq  \frac{d_{E/L}}{2} -1 \leq \frac{13}{2},
$$
a contradiction.
\end{proof}

\begin{prop} \label{prop3.8}
Let $C$ be a curve of genus $g = 9$ with $\Cl(C) \geq 3$ and $E$ a bundle computing $\Cl_3(C)$. Then 
\begin{itemize}
\item either $\Cl(C) =3$ and $\gamma(E) \geq \frac{8}{3}$
\item or $\Cl(C) =4$ and $\gamma(E) = \frac{10}{3}$.
\end{itemize}
\end{prop}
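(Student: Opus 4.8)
The plan is to carry out the case analysis of Proposition~\ref{prop3.6} (following \cite{cl2}) with $g=9$, locating exactly where the hypotheses of Lemmas~\ref{lem3.4} and~\ref{lem3.5} break down and supplying a direct argument there. First, $g=9$ forces $\Cl(C)\le4$, so $\Cl(C)\in\{3,4\}$, and $\Cl_2(C)=\Cl(C)$ by Lemma~\ref{l2.1}. By \eqref{dr} we have $d_9\ge17$, hence $\frac{d_9}3-2\ge\frac{11}3$, which exceeds both $\frac83$ and $\frac{10}3$; so it is enough to prove $\gamma(E)\ge\frac{2\Cl(C)+2}3$. If $E$ has no proper subbundle $F$ with $h^0(F)>\rk F$ this is Lemma~\ref{pr}, so I assume such a subbundle exists and suppose, for a contradiction, that $\gamma(E)\le\frac{2\Cl(C)+1}3$. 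Then $\gamma(E)<\Cl(C)=\Cl_2(C)$, so $E$ is stable by Proposition~\ref{newprop}; moreover Lemma~\ref{lem3.2} rules out a line subbundle with $h^0\ge2$, so the subbundle with $h^0>\rk$ is a rank-$2$ bundle with $h^0\ge3$.

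Writing $d=d_E\le3g-3=24$, the ranges $d\le2g+2=20$ and $d\ge2g+4=22$ go through exactly as in Proposition~\ref{prop3.6}: Lemma~\ref{lem3.3} disposes of $d\le20$, while for $d\ge22$ I pass to a subbundle of maximal slope and apply Lemma~\ref{lem3.4} (rank $1$) or the estimate of Lemma~\ref{lem3.5} (rank $2$). The only point needing care is that Lemma~\ref{lem3.5} excludes $g=9$; but its sole ingredient is \eqref{eq3.1}, and one checks directly that $\frac{d+2g-12}9>\frac{2\Cl(C)+1}3$ persists for $g=9$ as soon as $d\ge22$ (it fails only at $d=21$ with $\Cl(C)=4$, where it reads $3>3$). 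So the genuinely new case is $d=2g+3=21$.

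For $d=21$ I would sidestep the maximal-slope dichotomy and argue directly with a saturated rank-$2$ subbundle $F$ with $h^0(F)\ge3$, whose existence is guaranteed above. Put $Q:=E/F$, a line bundle; then $h^0(Q)\ge h^0(E)-h^0(F)$, stability gives $d_F\le13$, and $\gamma(E)\le\frac{2\Cl(C)+1}3$ forces $h^0(E)\ge13-\Cl(C)$. I then split on $h^0(F)$: if $h^0(F)=3$ then $d_F\ge d_2$ by Lemma~\ref{pr}, while if $h^0(F)\ge4$ then $F$ contributes to $\Cl_2(C)$, so $\gamma(F)\ge\Cl(C)$ bounds $d_F$ below and caps $h^0(F)$. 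In every case the resulting lower bound on $h^0(Q)$ pushes $d_Q$ upward through the gonality estimates \eqref{dr}, whereas the Clifford bound $\gamma(Q)\ge\Cl(C)$ (valid because $Q$ contributes to $\Cl(C)$) caps $h^0(Q)$ from above; these are incompatible with $d_F+d_Q=21$ and $d_F\le13$. This contradiction yields $\gamma(E)\ge\frac{2\Cl(C)+2}3$, i.e.\ $\ge\frac83$ for $\Cl(C)=3$ and $\ge\frac{10}3$ for $\Cl(C)=4$.

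When $\Cl(C)=4$ the bound is sharp: by \cite{lmn} the (general) curve of genus $9$ carries a stable rank-$3$ bundle with $\det E=K_C$ and $h^0(E)=6$, which has $d=16$ and $\gamma(E)=\frac{10}3$, so $\Cl_3(C)\le\frac{10}3$ and equality holds. I expect the main obstacle to be precisely the case $\Cl(C)=4$, $d=21$: there every standard estimate of \cite{cl2} degenerates to the value $3$, and excluding $\gamma(E)=3$ depends on the delicate bookkeeping of the degrees and section-numbers of $F$ and $Q$ above, in which the particular gonalities $d_2,\dots,d_7$ for $g=9$ and the Clifford bound on $Q$ must be balanced against the stability of $E$.
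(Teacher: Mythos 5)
Your reduction to the single case $d=2g+3=21$ is correct and agrees with the paper: Lemmas \ref{lem3.2} and \ref{lem3.3} dispose of $d\le 20$, and Lemmas \ref{lem3.4} and \ref{lem3.5} (the latter via your direct check that \eqref{eq3.1} holds for $g=9$, $d\ge 22$) dispose of $d\ge 22$. The gap is in your treatment of $d=21$. The pivotal claim that $\gamma(Q)\ge\Cl(C)$ ``because $Q$ contributes to $\Cl(C)$'' is false: stability of $E$ gives $d_Q=21-d_F\ge 8=g-1$, and in the configurations surviving your other constraints one has $d_Q>g-1$, so $Q$ does not contribute to $\Cl(C)$; nor can one pass to $K_C\otimes Q^*$, since $h^0(K_C\otimes Q^*)=h^1(Q)\le 1$ there (line bundles of degree $>g-1$ obey no Clifford bound at all -- e.g.\ $\gamma(K_C)=0$). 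Concretely, for $\Cl(C)=4$ (where $d_2=8$) the configuration $d_F=8$, $h^0(F)=3$, $d_Q=13$, $h^0(Q)=6$, $h^1(Q)=1$, $h^0(E)=9$, $\gamma(E)=3$ satisfies every condition you impose: $d_F\ge d_2$ from Lemma \ref{pr}, $d_F\le13$ from stability, and $d_Q\ge d_5\ge 13$ from \eqref{dr} holds with equality. Likewise for $\Cl(C)=3$ with $d_2=7$: take $d_F=7$, $h^0(F)=3$, $d_Q=14\ge d_6$, $h^0(Q)=7$, $h^0(E)=10$, $\gamma(E)=\frac{7}{3}$. These are exactly the ``equality in formula (2.4) of \cite{cl2}'' configurations, and no bookkeeping of degrees, gonalities and section counts of $F$ and $Q$ alone can exclude them. (A secondary flaw: in the branch $h^0(F)\ge4$ you invoke $\gamma(F)\ge\Cl_2(C)$, which requires $F$ to be semistable; that is not automatic for a subbundle of a stable bundle.)

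This is precisely where the paper has to work hardest, and it does so by a different mechanism. For $\Cl(C)=3$ it retains the maximal-slope dichotomy you discarded: \eqref{eq3.1} holds at $d=21$ when $\Cl(C)=3$, so the argument of Proposition \ref{prop3.6} applies, the key point being that equality in formula (3.4) of \cite{cl2} would force a maximal-slope line subbundle of degree $1$, which is impossible. For $\Cl(C)=4$ it first invokes $\Cl_3(C)\le\frac{10}{3}$ from \cite[Theorem 4.3]{lmn} -- valid for \emph{every} genus-$9$ curve with $\Cl(C)=4$, so your ``(general)'' hedge is unnecessary and, taken literally, would leave the second bullet unproved -- and then, from $h^0(E)=9$, produces a line subbundle $L$ with $d_L\ge3$ and $h^0(L)=1$, shows that $F$ itself is the maximal-slope subbundle (so that $E/L$ is stable), and applies the Clifford argument to the \emph{rank-two} quotient $E/L$ instead of to $Q$: there $h^0(E/L)\ge 8$ and $h^1(E/L)\ge 6$, so $E/L$ or $K_C\otimes(E/L)^*$ genuinely does contribute to $\Cl_2(C)$, giving $h^0(E/L)\le\frac{d_{E/L}}{2}-2\le 7$, a contradiction. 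The Serre-duality step that fails for the rank-one quotient succeeds for the rank-two quotient; that substitution is the idea missing from your proposal.
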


\begin{proof}
Recall that $d_9 = 18$. So $\frac{d_9}{3} -2 \geq \Cl(C)$.

The only case we need to consider is $d = 2g+3 =21$. If $\Cl(C) = 3$, then \eqref{eq3.1} holds and the proof goes through as for Proposition \ref{prop3.6}.

So suppose $\Cl(C) = 4$. By Lemma \ref{l2.1}, $\Cl_2(C) = 4$. In this case $\gamma(E)\le\frac{10}3$ by \cite[Theorem 4.3]{lmn}, so we can assume that $E$ has no line subbundle with $h^0\ge2$ by Lemma \ref{lem3.2}. However, we can have equality in \cite[formula (2.4)]{cl2}. Thus $E$ fits into an exact sequence
$$
0 \ra F \ra E \ra E/F \ra 0
$$
with $F$ of rank $2$ and degree $d_2=8$ with $h^0(F) = 3, \; d_{E/F} = 13$, $h^0(E/F) = 6$ and all sections of $E/F$ lift to $E$.

We argue similarly as in the proof of Proposition \ref{prop3.7}. Since $h^0(E) = 9$, there exists a line subbundle $L$ with $d_L \geq 3$ and $h^0(L) = 1$. 
Again no line subbundle of $E$ can be of maximal slope and if $G$ is a subbundle of rank 2 of maximal slope with $d_G \geq 9$, then the proof of \cite[Lemma 3.2]{cl2} gives $\gamma(G) \geq \frac{7}{2}$ and $\gamma_{E/G} \geq 4$. So 
$$
\gamma(E) \geq \frac{11}{3},
$$ 
contradicting the fact that $\gamma(E)\le\frac{10}3$. 
Hence $F$ is a subbundle of maximal slope.

Note that $h^0(E/L) \geq 8$. Arguing similarly as above we get
$$
h^0(E/L) \leq \frac{d_{E/L}}{2} -2 \leq 7,
$$
a contradiction.
So we do not have equality in \cite[formula (2.4)]{cl2}, which implies $\gamma(E) \geq \frac{10}{3}$. Since $\Cl_3(C) \leq \frac{10}{3}$, this gives the result.
\end{proof}

As an immediate consequence we get

\begin{cor}\label{corg=9}
Let $C$ be a curve of genus $g =9$ with $\Cl(C) = 4$. Then
$$
\Cl_3(C) = \frac{10}{3}.
$$
\end{cor}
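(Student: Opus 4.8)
The plan is to read off the equality directly from Proposition \ref{prop3.8}. By definition, $\Cl_3(C)$ is the minimum of $\gamma(E)$ taken over all semistable bundles $E$ of rank $3$ with $h^0(E)\ge6$ and $\mu(E)\le g-1$, and the general theory of \cite{cliff} guarantees that this minimum is attained. So I would first fix a bundle $E$ that computes $\Cl_3(C)$, meaning $\gamma(E)=\Cl_3(C)$. Since $g=9$ and $\Cl(C)=4$, the hypothesis $\Cl(C)\ge3$ of Proposition \ref{prop3.8} holds, and the dichotomy in that proposition forces $E$ into its second alternative, giving $\gamma(E)=\frac{10}{3}$. Combining $\Cl_3(C)=\gamma(E)$ with $\gamma(E)=\frac{10}{3}$ yields the claim $\Cl_3(C)=\frac{10}{3}$.

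The only point requiring any comment is the existence of a bundle computing $\Cl_3(C)$, that is, that the infimum in the definition of $\Cl_n(C)$ is realised and not merely approached; this is standard and is already built into the use of $\min$ rather than $\inf$ in the definition recalled in Section \ref{prelim}. Beyond this there is no genuine obstacle, because all of the substantive content — the sharp lower bound $\gamma(E)\ge\frac{10}{3}$ together with the complementary upper bound $\gamma(E)\le\frac{10}{3}$ supplied by \cite[Theorem 4.3]{lmn} — is established inside the proof of Proposition \ref{prop3.8}. The corollary is therefore obtained purely by specialising that proposition to the equality case $\Cl(C)=4$, and I would present it in a single short paragraph exactly as an immediate consequence.
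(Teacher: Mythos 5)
Your proposal is correct and is exactly the paper's argument: the corollary is stated there as an immediate consequence of Proposition \ref{prop3.8}, whose second alternative forces $\gamma(E)=\frac{10}{3}$ for any bundle $E$ computing $\Cl_3(C)$ when $\Cl(C)=4$. Your added remark that the minimum in the definition of $\Cl_3(C)$ is attained is a harmless (and correct) bit of bookkeeping that the paper leaves implicit.
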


\begin{proof}[Proof of Theorem \ref{thm3.1}] The inequality for $\Cl_3(C)$ is a consequence of Propositions \ref{prop3.6},
 \ref{prop3.7} and  \ref{prop3.8}. The last assertion follows from Proposition \ref{newprop}.
\end{proof}

\begin{rem} \label{rem3.10}
{\em
Suppose that $C$ is as in the statement of Theorem \ref{thm3.1} and further that $3\Cl(C)  \geq 2d_2-6$. Let $L_1$ and $L_2$ be line bundles on $C$ of degree $d_2$ with $h^0(L_i) = 3$ for $i = 1,2$. By Lemma \ref{l2.4}, $E_{L_1}$ and $E_{L_2}$ are stable with $h^0 = 3$.
 By Proposition \ref{p2.5}, all non-trivial extensions
$$
0 \ra E_{L_1} \ra E \ra L_2 \ra 0
$$
with $h^0(E) = 6$ give semistable bundles $E$ and such bundles exist if and only if 
$$
h^0(E_{L_1} \otimes E_{L_2}) \geq 10.
$$
Moreover,
$$
\gamma(E) = \frac{2d_2 -6}{3}.
$$
For the general curve of genus $9$ or $11$, these values are attained \cite{lmn}.

Note that, if in addition $d_2$ computes $\Cl(C)$, then
$$
\gamma(E) = \frac{2\Cl(C) +2}{3}.
$$
Smooth plane curves satisfy these conditions and we have a more precise statement in the next section (Theorem \ref{thm5.6}). The normalisations of nodal plane curves with small numbers of nodes are also covered by this remark (see Theorem \ref{thmnodal} and Remark \ref{rem5.7}).}
\end{rem}

\begin{rem}
{\em
For a general curve $C$ of genus $g$ we have
$$
d_9 = g + 9 - \left[ \frac{g}{10} \right].
$$
So $\frac{d_9}{3} - 2 \geq \frac{2\Cl(C) + 2}{3}$ for $g \leq 30$. If $\Cl_2(C) = \Cl(C)$ for such curves, then
$$
\Cl_3(C) \geq \frac{2\Cl(C) +2}{3}.
$$
For instance, for a general curve of genus $10$, we have $\Cl_2(C)=\Cl(C)=4$, so $\frac{10}3\le\Cl_3(C)\le4$. For a general curve of genus $11$, we know that $\Cl_2(C)=\Cl(C)=5$ \cite[Theorem 1.3]{fo2}; so, using \cite[Theorem 4.6]{lmn}, we obtain  $4\le\Cl_3(C)\le\frac{14}3$, which is an improvement on the known result $\frac{11}3\le\Cl(C)\le\frac{14}3$}.
\end{rem}

\section{Plane curves}\label{plane}

To begin with, let $C$ be a smooth plane curve of degree $\delta \geq 6$ and let $H$ denote the hyperplane bundle on $C$. We know that 
$$
\Cl_2(C) = \Cl(C) = \delta -4
$$
(see \cite[Proposition 8.1]{cliff}). We also know the values of all $d_r$ by Noether's Theorem (a proof, which also works for any integral plane curve as claimed by Noether, was given by Hartshorne \cite[Theorem 2.1]{h}). In particular,
$$
d_1 = \delta - 1, \quad d_2 = \delta, \quad d_6=3\delta-3, \quad d_9 = 3\delta.
$$
Moreover, by the same theorem, $H$ is the only line bundle of degree $\delta$ on $C$ with $h^0(H)=3$ and also the only line bundle computing $\Cl(C)$.
 
The following proposition is a consequence of Theorem \ref{thm3.1}.

\begin{prop} \label{prop4.1}
Let $C$ be a smooth plane curve of degree $\delta \geq 6$. Then
$$
\Cl_3(C) \geq \frac{2\delta -6}{3}.
$$
\end{prop}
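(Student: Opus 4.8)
The plan is to derive this directly from Theorem \ref{thm3.1}, since the proposition is explicitly advertised as a consequence of it. The only real work is to check that the hypotheses of Theorem \ref{thm3.1} are satisfied and then to substitute the plane-curve values of $\Cl(C)$ and $d_9$ recorded just above the statement.

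First I would verify the hypotheses. Theorem \ref{thm3.1} requires $g \geq 7$ and $\Cl_2(C) = \Cl(C) \geq 2$. For a smooth plane curve of degree $\delta$ the genus is $g = \frac{(\delta-1)(\delta-2)}{2}$, so $\delta \geq 6$ forces $g \geq 10 \geq 7$. The equality $\Cl_2(C) = \Cl(C) = \delta - 4$ is the value recalled at the start of Section \ref{plane} (from \cite[Proposition 8.1]{cliff}), and $\delta - 4 \geq 2$ precisely when $\delta \geq 6$; so both hypotheses hold throughout the stated range, including the boundary case $\delta = 6$.

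Next I would substitute the explicit gonality and Clifford values for plane curves, namely $\Cl(C) = \delta - 4$ and $d_9 = 3\delta$ (the latter by Noether's theorem, as recorded above). Theorem \ref{thm3.1} then gives
$$
\Cl_3(C) \geq \min\left\{ \frac{3\delta}{3} - 2, \; \frac{2(\delta-4) + 2}{3} \right\} = \min\left\{ \delta - 2, \; \frac{2\delta - 6}{3} \right\}.
$$
Finally I would compare the two quantities inside the minimum: the inequality $\delta - 2 \geq \frac{2\delta - 6}{3}$ is equivalent, after clearing denominators, to $3\delta - 6 \geq 2\delta - 6$, i.e. to $\delta \geq 0$, which holds trivially. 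Hence the minimum equals $\frac{2\delta - 6}{3}$, yielding the claimed bound.

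There is essentially no obstacle here beyond bookkeeping; if anything, the only point that needs a moment's care is confirming that the $\delta = 6$ case still meets the constraint $\Cl(C) \geq 2$ in Theorem \ref{thm3.1}, which it does with equality, so no separate argument is required.
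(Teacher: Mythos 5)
Your proof is correct and follows exactly the paper's own argument: the paper likewise deduces the proposition from Theorem \ref{thm3.1} by noting that $\frac{d_9}{3}-2=\delta-2>\frac{2\delta-6}{3}$, so the minimum is $\frac{2\Cl(C)+2}{3}=\frac{2\delta-6}{3}$. Your additional verification of the hypotheses ($g\geq 10\geq 7$ and $\Cl_2(C)=\Cl(C)=\delta-4\geq 2$) is implicit in the paper's setup and is a reasonable bit of care, but nothing in substance differs.
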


\begin{proof}
The result follows from Theorem \ref{thm3.1}, since
\begin{equation} \label{e5.1}
\frac{d_9}{3} -2 = \delta -2 > \frac{2\delta -6}{3}.
\end{equation}
\end{proof}

Note that, if $\delta=6$, we have equality in Proposition \ref{prop4.1} since $\Cl_3(C)=\Cl(C)=2$ by Lemma \ref{l2.1}. So we can assume $\delta\ge7$.

Suppose now that $E$ is a bundle computing $\Cl_3(C)$.

\begin{lem} \label{l5.2}
If $d \geq 2 \delta + 6$ and $E$ has a subbundle $F$ of maximal slope with $\rk F=1$, then
$$
\gamma(E) > \frac{2 \delta -6}{3}.
$$
\end{lem}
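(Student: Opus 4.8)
The plan is to analyse $E$ through the quotient $Q:=E/F$, a bundle of rank $2$, using the exact sequence $0\to F\to E\to Q\to0$ in which $F$ is a line bundle of maximal slope, so $d_F\le\mu(E)=\frac d3$. Maximality of $\mu(F)$ gives two facts I would record at once. Any line subbundle $M\subset Q$ lifts to a rank-$2$ subbundle $\widetilde M\subseteq E$ with $\mu(\widetilde M)=\frac{d_F+d_M}{2}\le d_F$, so every line subbundle of $Q$ has $d_M\le d_F$; since $d_F\le\frac d3\le\mu(Q)$ this also shows $Q$ is semistable. Counting sections yields $h^0(E)\le h^0(F)+h^0(Q)$, and comparing the three $\gamma$'s term by term gives the basic inequality $\gamma(E)\ge\tfrac13\bigl(\gamma(F)+2\gamma(Q)\bigr)$. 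It therefore suffices to prove $\gamma(F)+2\gamma(Q)>2\delta-6=2\Cl(C)+2$.

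Next I would run the case analysis of \cite[Lemma 3.1]{cl2}, observing that for plane curves, where $\Cl(C)=\Cl_2(C)=\delta-4$, every case furnishing a genuine Clifford contribution clears the target. If $h^0(F)\ge2$ then (as $d_F\le\frac d3\le g-1$) $F$ contributes to $\Cl(C)$, so $\gamma(F)\ge\delta-4$; if in addition $Q$, or by Serre duality $K_C\otimes Q^*$, contributes to $\Cl_2(C)$ then $\gamma(Q)\ge\delta-4$ and $\gamma(F)+2\gamma(Q)\ge3(\delta-4)>2\delta-6$ for $\delta\ge7$. The case $d_F\ge\delta-1$ with $h^0(F)\le1$ is handled the same way, since then $\gamma(F)=d_F+2-2h^0(F)\ge\delta-1$. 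The purely numerical terms from \cite[Lemma 3.1]{cl2} are dispatched using $d\ge2\delta+6$; for instance $\frac{\Cl(C)}{3}+\frac{2d-6}{9}>\frac{2\delta-6}{3}$ reduces to $d>\frac{3\delta}{2}$. The remaining possibility that neither $Q$ nor $K_C\otimes Q^*$ contributes occurs only when $\mu(Q)>g-1$ and $h^1(Q)\le3$; there Riemann–Roch gives $h^0(Q)\le d_Q-2g+5$, hence $\gamma(Q)\ge-\mu(Q)+2g-3\ge\frac{g-3}{2}$, and since $g>2\delta-3$ for a plane curve one gets $\gamma(F)+2\gamma(Q)\ge g-3>2\delta-6$.

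The genuinely new point, and the main obstacle, is the case $h^0(F)\le1$ with $d_F\le\delta-2$: here the estimate of \cite[Lemma 3.1]{cl2} degenerates into a term of the shape $\frac{2\Cl_2(C)}{3}+\frac{d-2g}{9}$, which is useless because $g$ is of order $\delta^2$ while $d$ is only of order $\delta$. The hard part is to replace this with an argument independent of the genus, and the key is again maximality of $\mu(F)$: since $d_M\le d_F\le\delta-2<\delta-1=d_1$ for every line subbundle $M\subset Q$, no line subbundle of $Q$ has $h^0\ge2$, so $Q$ possesses no proper subbundle $G$ with $h^0(G)>\rk G$. Paranjape–Ramanan (Lemma \ref{pr}) then gives $d_Q\ge d_{2s}$ with $s=h^0(Q)-2=h^0(E)-h^0(F)-2\ge3$. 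Combining $d\ge d_Q\ge d_{2s}$ with $h^0(E)\le s+3$ yields $\gamma(E)\ge\frac{d_{2s}-2s}{3}$; and because $d_{2s+2}\ge d_{2s}+2$ the quantity $d_{2s}-2s$ is non-decreasing in $s$, so Noether's value $d_6=3\delta-3$ forces $d_{2s}-2s\ge3\delta-9$ and hence $\gamma(E)\ge\delta-3>\frac{2\delta-6}{3}$. Thus the whole difficulty is concentrated in noticing that a small $d_F$ automatically pushes $Q$ into the range where Lemma \ref{pr} applies, after which the explicit plane-curve gonalities give a bound with no dependence on $g$.
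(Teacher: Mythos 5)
Your argument contains one genuine gap, in the case analysis surrounding contribution of the quotient. You claim that the remaining possibility that neither $Q$ nor $K_C\otimes Q^*$ contributes to $\Cl_2(C)$ ``occurs only when $\mu(Q)>g-1$ and $h^1(Q)\le3$''. This is false: contribution also requires at least $4$ sections, so neither bundle contributes whenever $h^0(Q)\le3$, whatever the slope of $Q$. Consequently, in your cases $h^0(F)\ge2$ and $d_F\ge\delta-1$, the subcase $h^0(Q)\le3$ is not covered by anything you wrote. Fortunately it can be repaired with tools already in your proposal: if $h^0(Q)\le3$, then
$$
\gamma(Q)=\mu(Q)-h^0(Q)+2\ge\mu(Q)-1\ge\frac{d}{3}-1\ge\frac{2\delta+3}{3},
$$
using $\mu(Q)=\frac{d-d_F}{2}\ge\frac{d}{3}$ and $d\ge2\delta+6$, whence
$$
\gamma(F)+2\gamma(Q)\ge(\delta-4)+\frac{4\delta+6}{3}=\frac{7\delta-6}{3}>2\delta-6.
$$
With this subcase inserted, the proof closes up: the slope bound $d_M\le d_F$ for line subbundles of $Q$, the inequality $3\gamma(E)\ge\gamma(F)+2\gamma(Q)$, the Riemann--Roch estimate when $\mu(Q)>g-1$ and $h^1(Q)\le3$, and the Paranjape--Ramanan step are all correct.

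It is worth noting that your (repaired) argument is genuinely different from the paper's. The paper stays inside the machinery of \cite[Lemma 3.1]{cl2}: after the same numerical check $d>\frac{3\delta}{2}$, its formulas (3.4) and (3.5) give $\gamma(E)\ge\frac{2\Cl_2(C)+d_F}{3}$, so $\gamma(E)\le\frac{2\delta-6}{3}$ forces $d_F\le2$; on the other hand $d\ge2\delta+6$ forces $h^0(E)\ge9$, hence a line subbundle of degree $\ge3$ (a line subbundle with $h^0\ge2$ already has degree $\ge d_1=\delta-1$), a contradiction. Your treatment of the small-$d_F$ case --- Lemma \ref{pr} applied to $Q$ together with Noether's value $d_6=3\delta-3$ --- replaces this mechanism and is in fact more robust, since it disposes of all $d_F\le\delta-2$ at once rather than only $d_F\le2$; the price is the heavier bookkeeping on whether $Q$ or its Serre twist contributes, which is exactly where the gap crept in.
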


\begin{proof}
We follow the proof of \cite[Lemma 3.1]{cl2}. First observe that
$$
\frac{\Cl(C)}{3} + \frac{2d  -6}{9} > \frac{2 \delta-6}{3}.
$$
So we can use \cite[formulas (3.4) and (3.5)]{cl2} obtaining
$$
\gamma(E) \geq \frac{2 \Cl_2(C)+d_F}{3}.
$$
To get $\gamma(E) = \frac{2\delta -6}{3}$, this requires $d_F \leq 2$.

On the other hand, if $d \geq 2 \delta +6$ and $\gamma(E)=\frac{2\delta-6}3$, then 
$$
h^0(E) =\frac{d-3\gamma(E)}2+3= \frac{d}{2} - \delta + 6 \geq 9.
$$
So $E$ possesses a line subbundle of degree $\geq 3$, a contradiction.
\end{proof}

\begin{lem} \label{l5.3}
If $d > g + \frac{3}{2} \delta$, then 
$$
\gamma(E) > \frac{2 \delta -6}{3}.
$$
\end{lem}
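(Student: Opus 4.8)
The plan is to argue by the rank of a maximal-slope proper subbundle $F\subset E$, using throughout that for a smooth plane curve of degree $\delta\ge7$ one has $\Cl_2(C)=\Cl(C)=\delta-4\ge3$, that $g=\frac{(\delta-1)(\delta-2)}2\ge15$, and that the target value rewrites as $\frac{2\delta-6}3=\frac{2\Cl(C)+2}3$. Since $\rk E=3$, such an $F$ has rank $1$ or $2$, and I would treat the two cases separately. Note first that $d>g+\frac32\delta$ together with $d\le3g-3$ keeps us in a non-empty range, precisely because $g\ge15$.

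In the case $\rk F=1$ I would simply invoke Lemma \ref{l5.2}. The only thing to verify is that its degree hypothesis is met, i.e. that $d>g+\frac32\delta$ forces $d>2\delta+6$; this holds because $g+\frac32\delta-(2\delta+6)=g-\frac12\delta-6>0$ for every $\delta\ge7$, the genus growing quadratically in $\delta$ while $\frac12\delta+6$ grows only linearly. Lemma \ref{l5.2} then yields $\gamma(E)>\frac{2\delta-6}3$ at once.

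The substantive case is $\rk F=2$, where I would apply the bound of \cite[Lemma 3.2]{cl2} exactly as in Proposition \ref{prop2.2} (valid here since $g\ge15$), namely
$$\gamma(E)\ge\min\left\{\frac{\Cl(C)+2\Cl_2(C)}3,\ \frac{\Cl(C)}3+\frac{2d-2g-6}9,\ \frac{2\Cl_2(C)}3+\frac d9,\ \frac{2\Cl_2(C)}3+\frac{4g-d-6}9,\ \frac{d+2g-12}9\right\},$$
and then check that each of the five entries strictly exceeds $\frac{2\delta-6}3$. Substituting $\Cl(C)=\Cl_2(C)=\delta-4$: the first entry is $\delta-4>\frac{2\delta-6}3$ since $\delta>6$; the third requires only $d>6$; the fourth is equivalent to $d<4g-12$, which follows from $d\le3g-3$ because $g>9$; and the fifth is equivalent to $d>6\delta-6-2g$, which is implied by the hypothesis once $g\ge\frac32\delta-2$ (true for $\delta\ge7$). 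The one entry that really encodes the hypothesis is the second: a short computation shows $\frac{\Cl(C)}3+\frac{2d-2g-6}9>\frac{2\delta-6}3$ is equivalent to $2d>2g+3\delta$, that is, to $d>g+\frac32\delta$.

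The main obstacle, such as it is, is to confirm that the second bound reproduces exactly the stated threshold $g+\frac32\delta$ and that the remaining four inequalities hold across the whole admissible range $g+\frac32\delta<d\le3g-3$; this is where $g\ge15$ (hence $g>9$ and $g\ge\frac32\delta-2$) is used. Everything else is routine bookkeeping, provided one is careful to apply \cite[Lemma 3.2]{cl2} in its rank-$2$ maximal-slope form and to confirm that the rank-$1$ possibility is genuinely subsumed by Lemma \ref{l5.2}.
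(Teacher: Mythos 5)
Your proof is correct and follows essentially the same route as the paper: reduce to the case where every maximal-slope subbundle of $E$ has rank $2$ via Lemma \ref{l5.2} (after noting $d > g+\tfrac32\delta$ forces $d \geq 2\delta+6$), then verify that each of the five quantities in the minimum of the cited rank-3 lower bound (\cite[Lemma 3.2]{cl2}) strictly exceeds $\tfrac{2\delta-6}{3}$, with the second quantity being exactly where the hypothesis $d > g+\tfrac32\delta$ is needed. Your individual verifications (first: $\delta>6$; third: $d>6$; fourth: $d<4g-12$ from $d\le 3g-3$ and $g>9$; fifth: $d>6\delta-2g-6$) match the paper's.
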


\begin{proof}
Note first that $d>g+\frac32\delta$ implies that $d\ge2\delta+6$. By Lemma \ref{l5.2}, we can therefore assume that every subbundle $F$ of $E$ of maximal slope has rank $2$. We check now that all the numbers in the minimum of \cite[Lemma 3.2]{cl2} are $>\frac{2\delta-6}3$. For the first number, this is immediate since $\Cl_2(C)=\Cl(C)=\delta-4$. The second requires precisely the condition $d>g+\frac32\delta$. The third needs only $d>6$. For the fourth, we need $d<4g-12$, which is true since $d\le3g-3$ and $g>9$. Finally, for the fifth number, we need $d>6\delta-2g-6$, which is easily seen to be true. 
\end{proof}

\begin{lem} \label{l5.4}
If $E$ has a line subbundle with $h^0 \geq 2$ and $d < 2 \delta -8 + 2g$, then 
$$
\gamma(E) > \frac{2\delta -6}{3}.
$$
\end{lem}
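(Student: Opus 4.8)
The plan is to follow the proof of Lemma \ref{lem3.2}, which is the analogue of this statement for general curves with $\Cl_2(C)=\Cl(C)$, and to specialize to the plane-curve situation, where $\Cl(C)=\delta-4$ and all the relevant gonalities are known explicitly by Noether's theorem. The key observation that organizes everything is that
$$
\frac{2\Cl(C)+2}{3}=\frac{2(\delta-4)+2}{3}=\frac{2\delta-6}{3},
$$
so the target value is exactly the bound $\frac{2\Cl(C)+2}{3}$ of Lemma \ref{lem3.2}; the only additional feature is that we want a strict inequality.

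First I would invoke \cite[Lemma 2.2]{cl2}. Since $E$ has a line subbundle with $h^0\ge2$, this gives
$$
\gamma(E)\ge\min\left\{\frac{2\Cl(C)+1}{3},\ \frac13\bigl(4\Cl(C)+2g+2-d\bigr)\right\}.
$$
It then suffices to show that both terms of the minimum strictly exceed $\frac{2\delta-6}{3}$. Note that the \emph{unimproved} first term equals $\frac{2\delta-7}{3}$, which is in fact smaller than $\frac{2\delta-6}{3}$, so improving it is essential. Tracing case (i) in the proof of \cite[Lemma 2.2]{cl2}, exactly as in Lemma \ref{lem3.2}, yields the intermediate estimate $\gamma(E)\ge\frac{\Cl(C)+d_6}{3}-2$; substituting the Noether value $d_6=3\delta-3$ gives $\frac{4\delta-13}{3}$, and $\frac{4\delta-13}{3}>\frac{2\delta-6}{3}$ holds because it reduces to $2\delta>7$, which is satisfied for $\delta\ge7$. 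For the second term, after substituting $\Cl(C)=\delta-4$ one checks that
$$
\frac13\bigl(4\Cl(C)+2g+2-d\bigr)>\frac{2\delta-6}{3}\iff d<2\delta-8+2g,
$$
so the hypothesis on $d$ delivers the required strict inequality here as well. Combining the two bounds gives $\gamma(E)>\frac{2\delta-6}{3}$.

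The main, and essentially the only, obstacle is the improvement of case (i): one must re-enter the proof of \cite[Lemma 2.2]{cl2} and confirm that it produces the bound $\frac{\Cl(C)+d_6}{3}-2$ before any weaker estimate on $d_6$ is applied, precisely as was done in the proof of Lemma \ref{lem3.2}. Once this is in hand, the exact plane-curve value $d_6=3\delta-3$ makes the first term strictly larger than the target directly, with no appeal to integrality of $3\Cl_3(C)$ needed, and the remaining verifications are immediate arithmetic.
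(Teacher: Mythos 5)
Your proposal is correct and follows essentially the same route as the paper: the paper's (very terse) proof likewise re-enters the proof of \cite[Lemma 2.2]{cl2}, improves case (i) using the Noether value $d_6=3\delta-3$ (phrased there as $d_6>\delta+4$, i.e. $d_6>\Cl(C)+8$, which is equivalent to your comparison of $\frac{4\delta-13}{3}$ with $\frac{2\delta-6}{3}$), and uses the hypothesis $d<2\delta-8+2g$ to make the second branch of the minimum strictly exceed the target.
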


\begin{proof}
Since $d_6 = 3 \delta -3>\delta+4$, we see from the proof of \cite[Lemma 2.2]{cl2} that $\gamma(E) > \frac{2\delta -6}{3}$.
\end{proof}

\begin{lem} \label{l5.5}
Suppose that $E$ is a bundle computing $\Cl_3(C) = \frac{2 \delta - 6}{3}$.
If $d \leq 2g+1$, then $E$ fits into a non-trivial exact sequence
$$
0 \ra E_H \ra E \ra L \ra 0
$$
where $L \simeq H$ or $\simeq H^{\delta -4}$  and all sections of $L$ lift to $E$.
\end{lem}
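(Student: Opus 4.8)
The plan is to follow the equality analysis behind the bound $\gamma(E)\ge\frac{2\Cl(C)+2}{3}$ of Theorem \ref{thm3.1} (as in \cite[Lemma 2.3]{cl2} and in the proofs of Propositions \ref{prop3.7} and \ref{prop3.8}), and then to identify the resulting pieces by Noether's theorem.

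First I would record what the hypotheses yield. Since $\frac{2\delta-6}{3}<\delta-4=\Cl(C)$ for $\delta\ge7$, Theorem \ref{thm3.1} shows that $E$ is stable; in particular every line-bundle quotient of $E$ has positive degree. Because $d\le2g+1<2\delta-8+2g$, Lemma \ref{l5.4} forbids a line subbundle with $h^0\ge2$. Finally, if $E$ had no proper subbundle $F$ with $h^0(F)>\rk F$, then Lemma \ref{pr} would give $\gamma(E)\ge\frac{d_9}{3}-2=\delta-2>\frac{2\delta-6}{3}$ (as in the proof of Proposition \ref{prop3.6}); so such an $F$ exists, necessarily of rank $2$ with $h^0(F)\ge3$. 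Taking $F$ saturated makes $Q:=E/F$ a line bundle.

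The core is the inequality $\gamma(E)\ge\frac{2\gamma(F)+\gamma(Q)}{3}$, which becomes an equality exactly when all sections of $Q$ lift to $E$. Lemma \ref{pr} gives $d_F\ge d_2=\delta$, so $\gamma(F)\ge\frac{\delta-2}{2}$, while $h^0(F)\ge4$ would force $d_F\ge d_4>\delta$ and $\gamma(F)>\frac{\delta-2}{2}$. If in addition $Q$ contributes to $\Cl(C)$, i.e. $h^0(Q)\ge2$ and $h^1(Q)\ge2$, then $\gamma(Q)\ge\Cl(C)=\delta-4$ and hence $\frac{2\gamma(F)+\gamma(Q)}{3}\ge\frac{(\delta-2)+(\delta-4)}{3}=\frac{2\delta-6}{3}=\gamma(E)$. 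Equality throughout then forces $h^0(F)=3$, $d_F=\delta$, $\gamma(Q)=\Cl(C)$ and all sections of $Q$ lift, which is the asserted structure. The \emph{main obstacle} is to dispose of the two boundary configurations in which $Q$ does not contribute. If $h^0(Q)\le1$, then $h^0(F)\ge h^0(E)-1\ge5$, so $2\gamma(F)=d_F-2(h^0(F)-2)\ge d_6-6=3\delta-9>2\delta-6$ by Lemma \ref{pr}; since $\gamma(Q)\ge\deg Q>0$, this contradicts $2\gamma(F)+\gamma(Q)\le2\delta-6$. If $h^1(Q)\le1$, then $\deg Q=d-d_F\le2g+1-\delta$, and Riemann--Roch gives $\gamma(Q)\ge\delta-3>\Cl(C)$, again contradicting $2\gamma(F)+\gamma(Q)\le2\delta-6$.

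It remains to identify the two pieces. The bundle $F$ is semistable: a saturated destabilising line subbundle $N$ would satisfy $h^0(N)\le1$, so $F/N$ would be a line bundle with $h^0(F/N)\ge2$ but $\deg(F/N)<\frac{\delta}{2}<\delta-1=d_1$, which is impossible. Thus $h^0(F^*)=0$, and the argument of Proposition \ref{p2.5} gives $F\simeq E_{\det F}$ with $\det F$ of degree $\delta$ and $h^0(\det F)=3$; by Noether's theorem $\det F=H$, so $F\simeq E_H$. Writing $L:=Q$, the relation $\gamma(E)=\frac{2\gamma(E_H)+\gamma(L)}{3}$ with $\gamma(E_H)=\frac{\delta-2}{2}$ forces $\gamma(L)=\delta-4=\Cl(C)$, so $L$ computes $\Cl(C)$. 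By Serre duality such line bundles occur in pairs $\{M,K_C\otimes M^*\}$ of which exactly one has degree $\le g-1$; since $H$ is the unique line bundle of degree $\le g-1$ computing $\Cl(C)$, we conclude $L\simeq H$ or $L\simeq K_C\otimes H^*=H^{\delta-4}$, with all sections lifting, as required.
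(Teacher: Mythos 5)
Your proof is correct, and its endpoints coincide with the paper's: both arguments first use Lemma \ref{l5.4} together with \eqref{e5.1} and Lemma \ref{pr} to produce a saturated rank-2 subbundle $F$ with $h^0(F)\ge 3$ and no line subbundle with $h^0\ge 2$, then force equality in a Clifford-type estimate to get $h^0(F)=3$, $d_F=d_2=\delta$, $\gamma(E/F)=\Cl(C)$ and lifting of all sections, and finally identify $F\simeq E_H$ via the argument of Proposition \ref{p2.5} and $L$ via Noether's theorem. The genuine difference is the middle step: the paper runs through the case structure of \cite[Lemma 2.3]{cl2} (the thresholds $d_{2t}<2t+g-1$, $d_u<u+g-1$ and formulas (2.3), (2.4) of \cite{cl2}), whereas you work directly from the identity $\gamma(E)-\frac{2\gamma(F)+\gamma(Q)}{3}=\frac{2}{3}\left(h^0(F)+h^0(Q)-h^0(E)\right)\ge 0$ and trichotomize according to whether $Q$ contributes to $\Cl(C)$, eliminating the boundary cases $h^0(Q)\le 1$ (via Lemma \ref{pr} applied to $F$) and $h^1(Q)\le 1$ (via Riemann--Roch, which is precisely where the hypothesis $d\le 2g+1$ enters). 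Your version buys self-containedness and makes visible where each hypothesis is used; the paper's version is shorter given \cite{cl2} and its case analysis is not specific to plane curves, which is why it recycles almost verbatim in Theorem \ref{thmnodal}. One small repair: you claim only that $F$ is semistable, but the non-generatedness step borrowed from Proposition \ref{p2.5} needs the subsheaf $F'$ of degree $\delta-1$ to be semistable, which can fail for a strictly semistable $F$ when $\delta$ is even; in fact your own $d_1$-argument shows every saturated line subbundle $N\subset F$ satisfies $h^0(N)\le 1$, hence $h^0(F/N)\ge 2$, hence $\deg N\le \delta-d_1=1$, so $F$ is stable (as in the paper) and the appeal to Proposition \ref{p2.5} is legitimate.
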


\begin{proof}  Since $2g+1<2\delta-8+2g$, it follows from \eqref{e5.1} and Lemma \ref{l5.4} that $E$ has a subbundle $F$ of rank $2$ with $h^0(F) \geq 3$ and no line subbundle with $h^0 \geq 2$.

We follow the proof of \cite[Lemma 2.3]{cl2}. In the case $d_{2t} < 2t +g -1$ and $d_u < u + g -1$, the only possibility is that all the inequalities are equalities. This gives $t = 1$ (hence $h^0(F)=3$), $d_F = d_2 = \delta$ and $d_u=\delta-4+2u$. For any line subbundle $M$ of $F$ we have $h^0(M) \leq 1$.
So $h^0(F/M) \geq 2$. Hence $d_{F/M} \geq d_1 = \delta -1$. So $d_M \leq 1$ and $F$ is stable.

As in the proof of Proposition  \ref{p2.5} we see that $F$ is generated and has the form $F \simeq E_N$ for some line bundle $N$ of degree $d_2$ with $h^0(N) = 3$. The only such bundle is $H$. Moreover, $L:=E/E_H$ is a line bundle such that either $L$ or $K_C\otimes L^*$ computes $\Cl(C)$. It follows from Noether's Theorem that either $L\simeq H$ or $L\simeq K_C \otimes H^* \simeq H^{\delta -4}$.

In the argument leading up to \cite[formula (2.3)]{cl2}, we have the inequality $\frac{\gamma(E)}{2}\ge\frac{t}3+\frac{g-1}6$. This gives $\gamma(E)\ge\frac{g+1}{3}$ which implies that $\gamma(E) > \frac{2 \delta -6}{3}$.

Finally, for \cite[formula (2.4)]{cl2}, we obtain $\gamma(E) > \frac{2 \delta -6}{3}$ provided $d \leq 2g+1$.
\end{proof}

\begin{theorem} \label{thm5.6}
If $C$ is a smooth plane curve of degree $\delta \geq 7$ and $\Cl_3(C) = \frac{2 \delta -6}{3}$,
then any bundle $E$ computing $\Cl_3(C)$ is stable and fits into an exact sequence 
\begin{equation} \label{eqn3.1}
0 \ra E_H \ra E \ra H \ra 0
\end{equation}
and all sections of $H$ lift to $E$. Moreover, such extensions exist if and only if $h^0(E_H \otimes E_H) \geq 10$.
\end{theorem}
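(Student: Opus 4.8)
The plan is to determine the degree $d$ of $E$ precisely, to identify the rank-$2$ subbundle as $E_H$ by means of Lemma~\ref{l5.5}, and then to read off the existence criterion from Proposition~\ref{p2.5}. Stability is immediate: for $\delta\ge7$ one has $\Cl_3(C)=\frac{2\delta-6}3<\delta-4=\Cl(C)$, so the final assertion of Theorem~\ref{thm3.1} (equivalently Proposition~\ref{newprop}) already forces any bundle computing $\Cl_3(C)$ to be stable. It remains to pin down the extension.

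I would first bound $d$ from above. Since $E$ computes $\Cl_3(C)$ we have $\gamma(E)=\frac{2\delta-6}3$, so the conclusion of Lemma~\ref{l5.3} fails and its contrapositive gives $d\le g+\frac32\delta$. Substituting the plane-curve genus $g=\frac{(\delta-1)(\delta-2)}2$ rewrites this as $d\le\frac{\delta^2+2}2$, and a direct check gives $\frac{\delta^2+2}2\le 2g+1$ for $\delta\ge6$. Hence $d\le 2g+1$, so Lemma~\ref{l5.5} applies and $E$ fits into a non-trivial sequence $0\to E_H\to E\to L\to0$ with $L\simeq H$ or $L\simeq H^{\delta-4}$, all sections of $L$ lifting.

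The crux is to exclude $L\simeq H^{\delta-4}=K_C\otimes H^*$, and here the plane-curve genus does the work again. Since $\det E_H=H$ has degree $\delta$ while $\deg H^{\delta-4}=(\delta-4)\delta$, the case $L\simeq H^{\delta-4}$ would force $d=\delta^2-3\delta=2g-2$. But $2g-2>\frac{\delta^2+2}2=g+\frac32\delta$ exactly when $\delta^2-6\delta-2>0$, i.e. for every $\delta\ge7$; this contradicts the bound just obtained (equivalently, Lemma~\ref{l5.3} would then give $\gamma(E)>\frac{2\delta-6}3$). Thus $L\simeq H$ and $d=2\delta$; since all sections of $H$ lift (Lemma~\ref{l5.5}) and $h^0(E_H)=h^0(H)=3$, this yields $h^0(E)=6$. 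This interlocking of the inequalities, so that the upper bound $\frac{\delta^2+2}2$ on $d$ separates the value $2\delta$ (attained when $L\simeq H$) from $2g-2$ (forced when $L\simeq H^{\delta-4}$), is the place where I expect to have to be most careful, although for a smooth plane curve it collapses to the single inequality $\delta^2-6\delta-2>0$.

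For the last clause I would invoke Proposition~\ref{p2.5} directly, as anticipated in Remark~\ref{rem3.10}. Its hypotheses hold: $\Cl_2(C)=\Cl(C)=\delta-4$ and $3\Cl(C)=3\delta-12\ge 2\delta-6=2d_2-6$ for $\delta\ge6$. Taking $F=E_H$, which is stable of rank $2$ and degree $d_2=\delta$ with $h^0=3$ by Lemma~\ref{l2.4}, and the line bundle $H$ of degree $d_2$ with $h^0=3$ (the unique such bundle by Noether's theorem), Proposition~\ref{p2.5} shows that non-trivial extensions $0\to E_H\to E\to H\to0$ with $h^0(E)=6$ exist if and only if $h^0(E_H\otimes E_H)\ge10$ (here $E_L=E_H$). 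Every such extension is semistable with $\gamma(E)=\frac{2\delta-6}3=\Cl_3(C)$ and $\mu(E)\le g-1$, hence computes $\Cl_3(C)$ and, by the first part, is stable; together with the forward direction this is exactly the asserted equivalence.
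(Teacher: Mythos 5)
Your proposal is correct and takes essentially the same route as the paper's own proof: stability via Proposition \ref{newprop} (equivalently the last assertion of Theorem \ref{thm3.1}), the bound $d\le g+\frac32\delta<2g+1$ from Lemma \ref{l5.3} so that Lemma \ref{l5.5} applies, elimination of $L\simeq H^{\delta-4}$ because $d=2g-2$ would violate that same bound for $\delta\ge7$, and Proposition \ref{p2.5} with $F=E_H$, $L=H$ for the existence criterion. The only differences are cosmetic: you invoke Lemma \ref{l5.3} once by contraposition rather than twice, and you add the (correct, and implicitly assumed by the paper) verification that the extensions produced by Proposition \ref{p2.5} indeed contribute to and compute $\Cl_3(C)$.
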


\begin{proof}
Stability of $E$ follows from Proposition \ref{newprop}. Next we eliminate the possibility $L \simeq H^{\delta - 4}$ in Lemma \ref{l5.5}.
In this case $d = 2g-2$ and we can check that 
$$2g-2 > g + \frac{3}{2} \delta
$$
for $\delta \geq 7$. It follows from Lemma \ref{l5.3} that $\gamma(E) > \frac{2\delta -6}{3}$, a contradiction.

Since $2g+1>g+\frac32\delta$, Lemmas \ref{l5.3} and \ref{l5.5} cover all possibilities for $d$. This implies the existence of \eqref{eqn3.1}. The last assertion follows from Proposition \ref{p2.5}.
\end{proof}

We now consider the case when $C$ is the normalisation of a plane curve $\Gamma$ of degree $\delta$ whose only singularities are $\nu$ simple nodes. Since Noether's Theorem applies to $\Gamma$ rather than $C$, we cannot use it directly to obtain information about $C$. However many relevant facts are known about $C$. 

For our purposes, we shall assume that the nodes are in general position and that
\begin{equation}\label{eqnu} 
1\le\nu \leq \frac{1}{2} ( \delta^2 -7\delta +14).
\end{equation}
Note that, since $C$ has genus $g=\frac12(\delta-1)(\delta-2)-\nu$, \eqref{eqnu} is equivalent to
\begin{equation}\label{eqgenus}
g\ge2\delta-6.
\end{equation}
By \cite{c} and \cite[Corollary 2.3.1]{cm}, we have $\Cl(C) = \delta -4$ and this is computed by both $d_1$ and $d_2$. Moreover there are finitely many line bundles $H_1,\ldots,H_\ell$ of degree $d_2=\delta$ with $h^0(H_i)=3$; in fact, this is true for $g\ge\frac32\delta-3$ (or equivalently $\nu\le\frac12(\delta^2-6\delta+8)$) by \cite[Section 4]{s}. (For $g<\frac32\delta-3$, the result must fail since this is equivalent to the Brill-Noether number for line bundles of degree $\delta$ with 3 independent sections on $C$ being positive.)

We shall make the additional assumption that
\begin{equation}\label{eqd4}
d_4 \geq 2\delta -4;
\end{equation}
it follows then by \cite[Theorem 5.2]{cliff} that
\begin{equation}\label{eqcl2}
\Cl_2(C)=\Cl(C)=\delta-4.
\end{equation}
\begin{rem} \label{rem5.7}
{\em For $\delta\ge7$, we certainly have $d_4 \geq \delta +4$ by \eqref{dr}. So \eqref{eqd4} is satisfied for $\delta = 7$ or 8. The formula \eqref{eqd4} also holds for $\nu\le4$. To see this it is sufficient to show that any line bundle $L$ of degree $2\delta-5$ has $h^0(L)\le4$. For this we can write $\pi: C\to\Gamma$ for the normalisation map and apply \cite[Theorem 2.1]{h} to the torsion-free sheaf $\pi_*(L)$ which has degree $2\delta-5+\nu\le2\delta-1$. When $\nu\le3$, we obtain immediately $h^0(L)=h^0(\pi_*(L))\le4$. If $\nu=4$, we note that $\pi_*(L)$ is not of the required form for $h^0(\pi_*(L))=5$.}
\end{rem}

Before proceeding to our main result, we shall prove a lemma which we shall also need in Section \ref{three}.
\begin{lem}\label{lemg9}Let $C$ be a curve of genus $9$ with $\Cl(C)=3$. Suppose that $E$ is a semistable bundle of rank $3$ and degree $24$ with $h^0(E)\ge4$. Then $\gamma(E)\ge3$. 
\end{lem}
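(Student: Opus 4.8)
The plan is to reduce the assertion to a single value of $h^0(E)$ and then rule it out by a subbundle analysis. Since $\gamma(E)=\frac13\left(24-2(h^0(E)-3)\right)=10-\frac23 h^0(E)$, the conclusion $\gamma(E)\ge3$ is equivalent to $h^0(E)\le10$. First I would record that, by \eqref{dr}, $d_9\ge\min\{3+18,8+8\}=16$, so Theorem \ref{thm3.1} gives $\Cl_3(C)\ge\min\{\frac{d_9}3-2,\frac83\}=\frac83$. If $h^0(E)\le5$ the bound is immediate, and if $6\le h^0(E)\le10$ then $\gamma(E)\ge\frac{10}3\ge3$; if $h^0(E)\ge12$ then $E$ contributes to $\Cl_3(C)$ yet $\gamma(E)\le2<\frac83\le\Cl_3(C)$, a contradiction. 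Hence it suffices to exclude $h^0(E)=11$. In that case $\gamma(E)=\frac83=\Cl_3(C)$, so $E$ computes $\Cl_3(C)$; since $\Cl_3(C)=\frac83<3=\Cl_2(C)=\Cl(C)$, Proposition \ref{newprop} shows $E$ is stable.

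Assume now $h^0(E)=11$ and $E$ stable. Since $s:=h^0(E)-3=8$ and $d_{24}\ge g+23=32>24$ by \eqref{dr}, Lemma \ref{pr} forces $E$ to have a proper subbundle $F$ with $h^0(F)>\rk F$. Consider first the case where some line subbundle $L$ has $h^0(L)\ge2$. Then $d_L\ge d_1\ge5$ and stability gives $d_L\le7$; as $d_L<2g-2$, $L$ contributes to $\Cl(C)$, so $\gamma(L)\ge3$ and $h^0(L)\le\frac{d_L-1}2\le3$. Writing $G:=E/L$ (rank $2$, degree $24-d_L\ge17$), we get $h^0(G)\ge11-h^0(L)\ge8$ and $\gamma(G)=\mu(G)-h^0(G)+2\le3-\frac{d_L}2+h^0(L)\le\frac52$. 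On the other hand $G$ is a quotient of the stable bundle $E$, so every line-bundle quotient of $G$ has degree $>8$; a short computation shows this makes $G$ semistable unless $d_L=5$. When $G$ is semistable, $K_C\otimes G^*$ has slope $\frac{8+d_L}2<g-1$ and $h^0(K_C\otimes G^*)=h^1(G)\ge d_L\ge5$, so it contributes to $\Cl_2(C)$ and $\gamma(G)=\gamma(K_C\otimes G^*)\ge3$, contradicting $\gamma(G)\le\frac52$. In the single exceptional case $d_L=5$, the destabilising data are forced to be line bundles of degrees $10$ and $9$, each with at most $4$ sections because $\Cl(C)=3$; hence $h^0(G)\le8$ and $h^0(E)\le h^0(L)+h^0(G)\le10$, again a contradiction.

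It remains to treat the case where $E$ has no line subbundle with $h^0\ge2$; then the subbundle $F$ above has $\rk F=2$ and $h^0(F)\ge3$, and $F$ inherits the property of having no line subbundle with $h^0\ge2$. The plan here is to pin down the numerics and then eliminate the survivors as in Propositions \ref{prop3.7} and \ref{prop3.8}. Writing $h^0(F)=2+t$ and applying Lemma \ref{pr} to $F$ gives $d_F\ge d_{2t}$, while stability gives $d_F\le15$ and $h^0(E/F)\ge11-(2+t)$, whence $d_{E/F}\ge d_{8-t}$. Feeding these into \eqref{dr} together with Clifford's bound on the line bundle $E/F$ rules out $t\ge2$ and leaves $t=1$, $h^0(F)=3$, with $7\le d_F\le9$ and $E/F$ of degree $24-d_F$ carrying at least $8$ sections. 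Each of these finitely many configurations should then be excluded by producing an auxiliary line subbundle $L\subset E$ (necessarily with $h^0(L)=1$) of suitable degree, checking that $E/L$ is stable—or controlling its destabilisation as in the previous paragraph—and applying $\Cl_2(C)=3$ to $E/L$ through Serre duality.

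I expect this last step to be the main obstacle. It is precisely the configuration in which the naive estimates of \cite[Lemma 3.2]{cl2} return only $\frac83$ (the term $\frac{2\Cl_2(C)}3+\frac{4g-d-6}9$ equals $\frac83$ at $g=9$, $d=24$), which is why $g=9$ is excluded from Lemma \ref{lem3.5} and must be handled separately, as in Proposition \ref{prop3.8}. The delicate point is that the maximal line subbundle of a stable rank-$3$ bundle of degree $24$ need only have degree $\ge2$, so the Serre-duality argument can degenerate to the equality $\gamma(E/L)=\Cl_2(C)$; excluding these boundary bundles requires analysing the equality case in the spirit of \cite[Proposition 3.3]{cl2}, identifying $F\cong E_M$ with $M=\det F$ of degree $d_2$ and exploiting the fact that all sections of $E/F$ would then have to lift to $E$.
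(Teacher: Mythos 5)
Your reduction to the single problematic case $h^0(E)=11$, $\gamma(E)=\tfrac83$ is correct (modulo the harmless slip that \eqref{dr} gives $d_9\ge\min\{21,g+8\}=17$, not $16$; either value suffices), and your treatment of the case in which $E$ has a line subbundle $L$ with $h^0(L)\ge2$ is complete and correct: the Serre-duality argument applied to $G=E/L$, together with the analysis of the single possible destabilisation ($d_L=5$, destabilising sub-line bundle of degree $10$, quotient of degree $9$), yields the contradiction $h^0(E)\le10$. The genuine gap is the remaining case, where $E$ has no line subbundle with $h^0\ge2$: there you only reduce to the configurations $t=1$, $h^0(F)=3$, $7\le d_F\le 9$, $h^0(E/F)\ge8$, and then you offer a plan (``should then be excluded \dots'') which you yourself flag as ``the main obstacle''. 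As written, nothing excludes these configurations, so the lemma is not proved.

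The missing ingredient is the one the paper uses at the very start of its argument: since $h^0(E)=11$, for any three points $p_1,p_2,p_3$ we have $h^0(E(-p_1-p_2-p_3))\ge 11-9=2$, so $E$ possesses a line subbundle $L$ of degree $\ge 3$; in your remaining case it necessarily has $h^0(L)=1$, hence $h^0(E/L)\ge 10$. With this stronger bound your own dichotomy from the first case closes the argument: quotient line bundles of $E/L$ have degree $\ge 9$ by stability of $E$, a destabilising sub-line bundle of $E/L$ has degree $\le 15-d_L$ by stability of $E$ applied to its pullback, and $\Cl(C)=3$ caps the sections of the two pieces at $5$ and $4$, giving $h^0(E/L)\le 9$ in the unstable case, while the semistable case contradicts $\Cl_2(C)=3$ via Serre duality exactly as in your first paragraph. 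This also disposes of your worry about maximal line subbundles of degree $2$. The paper's own route is different and shorter: a maximal-slope line subbundle would have degree $\ge3$, whence $\gamma(E)\ge\frac{2\Cl_2(C)+d_L}{3}\ge 3$ by \cite[formula (3.4)]{cl2}; therefore every maximal-slope subbundle $F$ has rank $2$, and the equality case of the bound of \cite[Lemma 3.2]{cl2} forces $F$ to compute $\Cl_2(C)$ (so $h^0(F)\ge4$) with $d_F=10$ and $d_{E/F}=14$; but the line subbundles of $F$ have degree $\le 4<d_1$, so $F$ has no line subbundle with $h^0\ge2$, and Lemma \ref{pr} gives $d_F\ge d_4\ge 11$, a contradiction.
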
 
\begin{proof} Since $\Cl(C)\le4$, we have $\Cl_2(C)=\Cl(C)$ by Lemma \ref{l2.1}; moreover $d_9\ge17$ by \eqref{dr}. So, by Theorem \ref{thm3.1}, $\gamma(E)\ge\frac83$. If $\gamma(E)=\frac83$, then clearly $h^0(E)=\frac{d-3\gamma(E)}2+3=11$, so $E$ possesses a line subbundle of degree at least $3$. If this is a subbundle of maximal slope, then, by \cite[Lemma 3.1]{cl2} and its proof (see in particular \cite[ formula (3.4)]{cl2}), $\gamma(E)\ge3$, a contradiction. So every subbundle $F$ of $E$ of maximal slope must have $\rk F=2$. 

We now consider the proof of \cite[Lemma 3.2]{cl2}. The first three numbers and the last number in the minimum are certainly $\ge3$. The fourth number, however, is $\frac{8}{3}$. We can have $\gamma(E) = \frac{8}{3}$ if and only if all inequalities leading up to this are equalities. This implies that 
$$
F \; \mbox{computes} \; \Cl_2(C), \quad h^1(E/F) = 1, \quad d_{E/F} = 14.
$$ 
So $d_F = 10$. Since $E$ has no line subbundle of maximal slope, the maximal slope of a line subbundle of $F$ is 4. So $F$ has no line subbundle with $h^0 \geq 2$. By Lemma \ref{pr} this implies that $d_F \geq d_4$ and so $\geq 11$ by \eqref{dr}. This is a contradiction.
\end{proof}

\begin{theorem}\label{thmnodal}
Suppose that $C$ is the normalisation of a nodal plane curve of degree $\delta\ge7$ with $\nu$ nodes in general position and that \eqref{eqnu} holds. Suppose further that \eqref{eqd4} holds and $g\ge9$. Then 
$$\Cl_3(C)\ge\frac{2\delta-6}3.$$ 
Moreover, if $\Cl_3(C)=\frac{2\delta-6}3$, then any bundle $E$ computing $\Cl_3(C)$ is stable and fits into an exact sequence 
\begin{equation}\label{eqhi}
0\to E_{H_i}\to E\to L\to 0,
\end{equation}
where $3\le h^0(L)\le g+4-\delta$, $d_L=\delta-6+2h^0(L)$ and all sections of $L$ lift to $E$.
\end{theorem}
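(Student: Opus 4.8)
The plan is to follow the proof of the smooth case (Lemmas \ref{l5.2}--\ref{l5.5} and Theorem \ref{thm5.6}) as closely as possible, replacing the exact gonality values $d_6=3\delta-3$, $d_9=3\delta$ — which are unavailable for a nodal curve — by the bounds coming from \eqref{dr} and the standing hypothesis \eqref{eqd4}, and to invoke Lemma \ref{lemg9} for the one extremal genus-$9$ configuration that the degree estimates cannot reach. For the lower bound I would first record that \eqref{eqcl2} gives $\Cl_2(C)=\Cl(C)=\delta-4\geq 3$ and that $g\geq 9\geq 7$, so Theorem \ref{thm3.1} applies and yields $\Cl_3(C)\geq\min\{\frac{d_9}{3}-2,\frac{2\delta-6}{3}\}$. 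The decisive point is that \eqref{eqd4} together with the strict monotonicity $d_r<d_{r+1}$ of the gonality sequence gives $d_9\geq d_4+5\geq 2\delta+1$, so that $\frac{d_9}{3}-2>\frac{2\delta-6}{3}$ and the minimum is $\frac{2\delta-6}{3}$; thus \eqref{eqd4} does double duty, forcing $\Cl_2=\Cl$ and simultaneously pushing $d_9$ past $2\delta$.

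For the structure statement, suppose $\Cl_3(C)=\frac{2\delta-6}{3}$ and let $E$ compute it. Since $\frac{2\delta-6}{3}<\delta-4=\Cl_2(C)=\Cl(C)$ for $\delta\geq 7$, Proposition \ref{newprop} shows that $E$ is stable. From $\gamma(E)=\frac{2\delta-6}{3}$ and $h^0(E)\geq 6$ one gets $d=2\delta-12+2h^0(E)\geq 2\delta$, with $d\leq 3g-3$ as always. I would then treat the degree ranges exactly as in the smooth case. First, a line subbundle with $h^0\geq 2$ is excluded by the nodal analogue of Lemma \ref{l5.4}, which now uses $d_6\geq d_4+2\geq 2\delta-2>\delta+4$ in place of the exact value. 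For the large-degree range, the analogue of Lemma \ref{l5.2} rules out a rank-$1$ subbundle of maximal slope, and the analogue of Lemma \ref{l5.3}, via \cite[Lemma 3.2]{cl2}, forces $\gamma(E)>\frac{2\delta-6}{3}$ once $d>g+\frac{3}{2}\delta$; in particular this eliminates the non-special quotient (where $d=2g+4$) and thereby yields the upper bound on $h^0(L)$. For $g=9$, so $\delta=7$, the single degree $d=3g-3=24$ is not covered by these inequalities and is disposed of by Lemma \ref{lemg9}, which gives $\gamma(E)\geq 3>\frac{8}{3}$.

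In the complementary range of $d$, the nodal analogue of Lemma \ref{l5.5} produces the exact sequence. Lemma \ref{pr} combined with $\frac{d_9}{3}-2>\frac{2\delta-6}{3}$ forces $E$ to have a proper subbundle with $h^0>\rk$; having excluded line subbundles with $h^0\geq 2$, this must be a rank-$2$ subbundle $F$ with $h^0(F)\geq 3$ and no line subbundle with $h^0\geq 2$. The equality analysis of \cite[Lemma 2.3]{cl2} then pins down $d_F=d_2=\delta$ and $h^0(F)=3$, and the argument of Proposition \ref{p2.5} shows $F$ is generated and stable with $F\simeq E_N$ for a line bundle $N$ of degree $\delta$ and $h^0(N)=3$ — that is, $N=H_i$ for one of the finitely many such bundles. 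For the quotient $L=E/E_{H_i}$, the relation $h^0(E)=h^0(L)+3$ (forced by $\gamma(E)=\frac{2\delta-6}{3}$ and $h^0(E_{H_i})=3$) shows both that all sections of $L$ lift and that $d_L=\delta-6+2h^0(L)$, i.e. $\gamma(L)=\delta-4=\Cl(C)$, so $L$ computes the Clifford index; the bound $h^0(L)\geq 3$ is immediate from $d\geq 2\delta$, and $h^0(L)\leq g+4-\delta$ is exactly the condition $h^1(L)\geq 1$ established above.

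I expect the main obstacle to be the bookkeeping in this case analysis rather than any single hard idea: because the exact gonalities $d_6,d_9$ are not available, every inequality imported from \cite{cl2} must be re-checked against the weaker bounds \eqref{dr} and \eqref{eqd4}, and one must verify that the degree intervals arising from the analogues of Lemmas \ref{l5.2}--\ref{l5.5} together exhaust $[2\delta,3g-3]$ — with the genus-$9$ endpoint being precisely the gap that Lemma \ref{lemg9} was designed to fill.
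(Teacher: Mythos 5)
Your strategy is essentially the paper's own: keep the architecture of Lemmas \ref{l5.2}--\ref{l5.5} and Theorem \ref{thm5.6}, replace the exact gonalities by the bounds $d_6\ge d_4+2$, $d_9\ge d_4+5\ge 2\delta+1$ coming from \eqref{eqd4} and \eqref{dr}, get stability from Proposition \ref{newprop}, and plug the genus-$9$ hole at $d=3g-3$ with Lemma \ref{lemg9}. All of that matches the paper. But there is one genuine gap: your two degree ranges do not exhaust $[2\delta,3g-3]$. You dispose of $d>g+\frac{3}{2}\delta$ via the analogue of Lemma \ref{l5.3}, and you handle the complementary range by the analogue of Lemma \ref{l5.5}; however, the proof of Lemma \ref{l5.5} (through its use of \cite[formula (2.4)]{cl2}) produces the required structure or contradiction only for $d\le 2g+1$, whereas $d\le g+\frac{3}{2}\delta$ only guarantees $d\le 2g+2$ under \eqref{eqgenus}. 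For $\delta\ge9$, and for $\delta=7$ with $g\ge9$, one checks $2g+2>g+\frac{3}{2}\delta$, so the two ranges overlap and you are fine; but for $\delta=8$, $g=10$ one has $g+\frac{3}{2}\delta=2g+2=22$, and the (even, hence admissible) degree $d=2g+2$ lies in neither range. This is precisely the case the paper treats by a separate reexamination of \cite[formula (2.4)]{cl2}: equality there still forces $t=1$, $d_F=d_2=\delta$ and a sequence of the form \eqref{eqhi}, but the argument is different from the first case of \cite[Lemma 2.3]{cl2} that you invoke.

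The same boundary case exposes an error in your final step: you assert that $L$ ``computes the Clifford index.'' When $d=2g+2$ one has $d_L=2g+2-\delta>g-1$ and $h^1(L)=1$, so neither $L$ nor $K_C\otimes L^*$ contributes to $\Cl(C)$; all that survives is the numerical identity $\gamma(L)=\delta-4$, i.e.\ $d_L=\delta-6+2h^0(L)$, which is exactly how the theorem is phrased. (Even away from this case your claim needs the caveat ``either $L$ or $K_C\otimes L^*$ computes $\Cl(C)$,'' since $d_L$ can exceed $g-1$.) Note also that $h^0(L)=g+4-\delta$, i.e.\ $d=2g+2$, is allowed by the statement of the theorem, so this case must be included with its own argument, not eliminated; your derivation of the upper bound on $h^0(L)$ from the exclusion of $d\ge 2g+4$ is correct, but it is the existence of the sequence \eqref{eqhi} at $d=2g+2$ that your proof does not establish.
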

\begin{proof} Note first, using \eqref{eqd4}, that $d_9\ge d_4+5 \geq 2 \delta +1$; so \eqref{e5.1} holds. Since \eqref{eqcl2} also holds, Proposition \ref{prop4.1} is valid with the same proof as before; so $\Cl_3(C)\ge\frac{2\delta-6}3$. 

Suppose now that $\Cl_3(C)=\frac{2\delta-6}3$ and that $E$ is a bundle computing $\Cl_3(C)$. The proof of Lemma \ref{l5.2} remains valid. For Lemma \ref{l5.3}, we need first that $d>g+\frac32\delta$ implies that $d\ge2\delta+6$.This follows from \eqref{eqgenus} for $\delta\ge8$ and can easily be checked for $\delta=7$ and $g\ge9$. The condition $d<4g-12$ holds for $d\le3g-3$ provided $g>9$. For $g=9$ (which requires $\delta=7$ by \eqref{eqgenus}), the condition still holds for $d<3g-3$; the case $d=3g-3$ is covered by Lemma \ref{lemg9}.   Finally $d>6\delta-2g-6$ holds for $g>2\delta-6$ since $d\ge2\delta+6$; when $g=2\delta-6$, the condition $d>g+\frac32\delta$ implies $d>6\delta-2g-6$ for $\delta\ge8$.

For Lemma \ref{l5.4}, the requirement is $d_6>\delta+4$, which follows from \eqref{dr} and \eqref{eqgenus}.  It follows that every subbundle of maximal slope of $E$ has rank 2, so we can apply Lemma \ref{l5.5}. There is a minor change in the proof since $d_1=\delta-2$, which means that $F$ can have a line subbundle of degree $2$; however this does not affect the argument. On the other hand, the hyperplane bundle $H$ is no longer unique and we do not know all the bundles computing $\Cl(C)$, so we just obtain the form \eqref{eqhi} for the exact sequence defining $E$.

The remaining problem is that Lemmas \ref{l5.3} and \ref{l5.5} may no longer cover all cases. In fact $d\le g+\frac32\delta$ implies $d\le2g+2$ under our assumptions, but it is possible to have $d=2g+2$ for low values of $\delta$. In this case, we need to reexamine \cite[formula (2.4)]{cl2}; if $d=2g+2$, we still require $t=1$ and hence $d_F=d_2=\delta$; the quotient line bundle $L=E/F$ no longer computes the Clifford index, but it is still the case that $\gamma(L)=\delta-4$, giving a sequence of the form \eqref{eqhi}. The stability of $E$ follows from Proposition \ref{newprop}, while the inequalities for $h^0(L)$ come from $h^0(E)\ge6$ and $d\le2g+2$. \end{proof}

\begin{rem}\label{2g+2}
{\em The only case in which the possibility $d=2g+2$ needs to be included in \eqref{eqhi} under the assumptions of the theorem is when $\delta=8$, $g=10$. For small numbers of nodes, other possibilities can be excluded; for example, when $\delta=7$ and $\nu=1$ (so $g=14$), we have $2g-2>g+\frac32\delta$. We can therefore assume $d\le 2g-4$ in \eqref{eqhi}, corresponding to $h^0(L)\le8$.}\end{rem}

\begin{rem}\label{g=7}
{\em The excluded case $\delta=7$, $g=8$ will be covered in Section \ref{three} (Proposition \ref{prop5.6}), as will the case $\delta=7$, $g=7$ (hence $\nu=8$). In the latter case, it is proved in \cite{c} that $d_1=5$ but this does not imply that $\Cl(C)=3$ since there are infinitely many pencils on $C$ with degree $5$. Thus Theorem \ref{thmnodal} does not apply, but a modified version does hold (see Proposition \ref{prop5.5}), perhaps under stronger generality conditions.}\end{rem}

\section{Curves with Clifford index three}\label{three}

Let $C$ be a curve of genus $g$ with $\Cl(C) = 3$ and hence $g \geq 7$. 
We have $d_9 \geq 16$ for $g \geq 8$ from \eqref{dr}. For $g = 7,\; d_9 = 16$ by Riemann-Roch.
By Theorem \ref{thm3.1}, we have
$$
\frac{8}{3} \leq \Cl_3(C) \leq 3.
$$
Hence any bundle $E$ computing $\Cl_3(C)$ possesses a proper subbundle $F$ with $h^0(F) \geq \rk F + 1$. 

We now consider the possibility that $\gamma(E) = \frac{8}{3}$. Note that this can happen only if $d$ is even. 
We suppose throughout that $E$ is a bundle of degree $d$ computing $\Cl_3(C)$.

\begin{lem}  \label{lem5.1}
If $E$ possesses a line subbundle with $h^0 \geq 2$ and $d \leq 2g +5$, then
$$
\gamma(E) \geq 3.
$$
\end{lem}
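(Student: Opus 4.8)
The plan is to run the proof of Lemma \ref{lem3.2} essentially verbatim, but to keep the intermediate bounds sharp rather than rounding them down to $\frac{2\Cl(C)+1}{3}$, and then to specialise to $\Cl(C)=3$. Since $E$ has a line subbundle with $h^0\ge2$, \cite[Lemma 2.2]{cl2} applies and, exactly as in Lemma \ref{lem3.2}, its proof (after the improvement of case (i) that replaces the weaker quantity $\frac{2\Cl(C)+1}{3}$) yields
\[
\gamma(E)\ge\min\left\{\frac{\Cl(C)+d_6}{3}-2,\ \frac13\bigl(4\Cl(C)+2g+2-d\bigr)\right\}.
\]
It then suffices to show that, with $\Cl(C)=3$, both terms in this minimum are at least $3$.

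For the second term, $\Cl(C)=3$ gives $4\Cl(C)+2g+2-d=2g+14-d$, and the hypothesis $d\le2g+5$ makes this $\ge9$, so the term is $\ge3$. This is precisely where the tighter bound $d\le2g+5$ (as opposed to the $d\le2g+6$ of Lemma \ref{lem3.2}) is exactly what is needed to push the conclusion up from $\frac83$ to $3$. For the first term, I would invoke the gonality bound \eqref{dr}, which gives $d_6\ge\min\{\Cl(C)+12,g+5\}=\min\{15,g+5\}$; since $\Cl(C)=3$ forces $g\ge7$, this is $\ge12$, whence $\frac{\Cl(C)+d_6}{3}-2=\frac{d_6-3}{3}\ge3$. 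Combining the two estimates gives $\gamma(E)\ge3$.

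The one point requiring care is the first term: the crude inequality $d_6>\Cl(C)+7$ used inside the proof of Lemma \ref{lem3.2} is insufficient here, since it only delivers $\frac{\Cl(C)+d_6}{3}-2>\frac73$ and hence, by integrality of $3\gamma(E)$, at best $\gamma(E)\ge\frac83$. One must instead use the exact value $d_6\ge\min\{15,g+5\}$ from \eqref{dr}. The boundary case $g=7$ is the tight one, where $d_6\ge12$ holds with equality via the term $g+5$ rather than with room to spare; so I would check explicitly that \eqref{dr} does supply $d_6\ge12$ there (it does), which completes the argument.
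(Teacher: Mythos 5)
Your proof is correct and follows essentially the same route as the paper: both arguments revisit the proof of \cite[Lemma 2.2]{cl2}, use \eqref{dr} to get $d_6\ge12$ (so the case-(i) bound already gives $\gamma(E)\ge3$), and then observe that with $\Cl(C)=3$ the remaining term $\frac13(4\Cl(C)+2g+2-d)=4+\frac13(2g+2-d)$ is at least $3$ precisely when $d\le2g+5$. Your closing caveat about the rounded bound $\frac{2\Cl(C)+1}{3}$ being insufficient is exactly the point the paper's proof implicitly addresses by working with the sharp $d_6$ estimate.
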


\begin{proof} Consider the proof of \cite [Lemma 2.2]{cl2}. Noting that $d_6 \geq 12$ by \eqref{dr}, we see that the only possibility for having $\gamma(E) <3$ in the proof of \cite[Lemma 2.2]{cl2} is the inequality 
$$
\gamma(E) \geq \frac{1}{3}(4\Cl(C) + 2g + 2 -d) = 4 + \frac{1}{3}(2g + 2 - d).
$$
This gives $\gamma(E) \geq 3$.
\end{proof} 

\begin{rem}  \label{rem5.3}
{\em
Since we always have $d \leq 3g-3$, the assumption $d \leq 2g+5$ is redundant for $g = 7$ and $g=8$.
}
\end{rem}

\begin{lem} \label{lem5.3}
Suppose that there exists an exact sequence 
\begin{equation} \label{eq5.1}
0 \ra F \ra E \ra E/F \ra 0
\end{equation} 
with $\rk F =2$ and $h^0(F) \geq 3$ and that $E$ has no line subbundle with $h^0 \geq 2$.
If $d \leq 2g +2$ and $\gamma(E) = \frac{8}{3}$, then
$$
d_F = d_2 = 7, \; h^0(F) = 3, \; h^0(E/F) \geq 3 \; \mbox{and} \; d_{E/F} = 1 + 2h^0(E/F).
$$
Moreover, all sections of $E/F$ lift to $E$. 
\end{lem}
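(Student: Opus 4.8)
The plan is to follow the proof of \cite[Lemma 2.3]{cl2}, extract the lower bound $\gamma(E)\ge\frac{2\Cl(C)+2}{3}=\frac83$ exactly as in the proof of Lemma \ref{lem3.3}, and then read off precisely the conditions under which the hypothesised equality $\gamma(E)=\frac83$ can hold. Write $L:=E/F$, which is a line bundle since $F$ is a subbundle, and set $t:=h^0(F)-2\ge1$ and $u:=h^0(L)$. From the cohomology sequence of \eqref{eq5.1} one has $h^0(E)\le h^0(F)+u=t+u+2$, with equality precisely when all sections of $L$ lift to $E$; this yields the fundamental inequality $\gamma(E)\ge\frac{2\gamma(F)+\gamma(L)}{3}$, where $\gamma(F)=\frac12(d_F-2t)$ and $\gamma(L)=d_L-2(u-1)$.

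First I would bound $\gamma(F)$. Since $E$ has no line subbundle with $h^0\ge2$, neither does $F$, so Lemma \ref{pr} gives $d_F\ge d_{2t}$, and \eqref{dr} gives $d_{2t}\ge\min\{\Cl(C)+4t,\,g+2t-1\}$; the Brill--Noether-general branch yields $\gamma(F)\ge\frac{\Cl(C)}{2}+t$, the other branch $\gamma(F)\ge\frac{g-1}{2}$. Next I would bound $\gamma(L)$: provided $u\ge2$, the definition of the gonality sequence gives $d_L\ge d_{u-1}$, hence via \eqref{dr} either $\gamma(L)\ge\Cl(C)$ or $\gamma(L)\ge g-u$. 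Combining these in the principal case gives $\gamma(E)\ge\frac{2\Cl(C)+2t}{3}\ge\frac83$, and I would check that every remaining combination yields $\gamma(E)>\frac83$ for $g\ge8$ (using $t\ge1$ and $u\le g-3$), exactly as in the improvement $\gamma(E)\ge\frac{2t+g-1}{3}$ recorded in the proof of Lemma \ref{lem3.3}; the cases $u\le1$ are disposed of similarly, since they force $t$ to be large and hence $\gamma(F)$ correspondingly large.

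Imposing now $\gamma(E)=\frac83$ forces equality throughout the principal case: $t=1$ (so $h^0(F)=3$), $d_F=d_{2t}=\Cl(C)+4=7=d_2$, $d_L=d_{u-1}=\Cl(C)+2(u-1)=2u+1$ (so $d_{E/F}=1+2h^0(E/F)$), and all sections of $L$ lift to $E$. Finally, since $E$ contributes to $\Cl_3(C)$ we have $h^0(E)\ge6$; with $h^0(E)=3+u$ this forces $u=h^0(E/F)\ge3$, which completes the list of conclusions, and the explicit degree count $d=d_F+d_L=2u+8\le2g+2$ is seen to be exactly the constraint $u\le g-3$, consistent with lying in the Brill--Noether-general branch.

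The main obstacle I anticipate is the boundary bookkeeping. For $g=7$ the auxiliary bound $\frac{2t+g-1}{3}$ also equals $\frac83$, so strict inequality no longer rules out the large-$d_{2t}$ branch; there I would instead invoke \eqref{dr2}, which forces $d_2=7$ directly, leaving only the Brill--Noether-general branch viable. Likewise, at the degree boundary $u=g-3$ (that is, $d=2g+2$) the two gonality branches for $L$ coincide, so the formula $d_L=2u+1$ persists and no spurious configuration arises. One must also verify that the inequalities forced to be equalities are mutually compatible, which the degree identity $d=2u+8$ confirms.
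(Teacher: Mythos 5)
Your proposal is correct and follows essentially the same route as the paper: both proofs run through the case analysis of \cite[Lemma 2.3]{cl2} (Paranjape--Ramanan for $F$, the gonality bound \eqref{dr} for $F$ and $E/F$, then forcing equality throughout), with your ``large-$d_{2t}$ branch'' and ``degree boundary $u=g-3$'' corresponding exactly to the paper's treatment of \cite[formulas (2.3) and (2.4)]{cl2}, including the use of \eqref{dr2} to kill the branch $d_2\ge g+1$ and the observation that the $d=2g+2$ configuration reproduces the same conclusions. The only points to tighten in writing it up are the justification that $u\le g-3$ holds a priori (it follows since $u\ge g-2$ would give $d_L\ge d_{u-1}\ge 2g-4$ and hence $d\ge 2g+3$, not merely from the equality-case degree count) and, in the cases $u\le 1$, a lower bound on $\gamma(E/F)$ via semistability to supplement the largeness of $\gamma(F)$.
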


\begin{proof}
We follow the proof of \cite[Lemma 2.3]{cl2}. 
The first case to be considered is when $d_{2t} < 2t +g -1$ and $d_u < u + g -1$. Then we have $\gamma(E) = \frac{8}{3}$ only if $t=1$ (hence $h^0(F)=3$), $d_2 = d_F$ and $d_u = d_{E/F}$; moreover $d_2=\Cl(C)+4 = 7$ and $d_u=\Cl(C)+2u  = 1 + 2h^0(E/F)$. Since $h^0(E)\ge6$, we have also $h^0(E/F)\ge3$.
Moreover, $d = 10 + 2u$; since $\gamma(E) = \frac{8}{3}$, this gives $h^0(E) = 4 + u = h^0(F) + h^0(E/F)$. Hence all sections of $E/F$ lift to $E$.

The case of \cite[formula (2.3)]{cl2} can give $\gamma(E) = \frac{8}{3}$ only if $t = 1$.
In this case the hypothesis $d_{2t} \geq 2t +g-1$ gives $d_2 \geq g+1$ which is impossible.
This leaves us with the case of \cite[formula (2.4)]{cl2}. If $d \leq 2g+1$, this gives $\gamma(E) \geq 3$. For $d = 2g+2$ we must have $t = 1, \; d_F = d_2 =7, \; u =g-4$ and $d_{E/F} = d_u = 2g -5$.
The result follows as in the first part of the proof.
\end{proof}

\begin{lem}  \label{lem5.4}
Suppose that there exists an exact sequence \eqref{eq5.1} with $\rk F =2$ and $h^0(F) \geq 3$ and that $E$ has no line subbundle with $h^0 \geq 2$. If $d = 2g+4$ and $\gamma(E) = \frac{8}{3}$, then $h^0(E) = g+1, \; h^0(F) = 3$, and 

either  $\bullet \; d_F = d_2 = 7,\; d_{E/F} = 2g -3, \; h^0(E/F) = g-2 \; \mbox{or} \; g-1$

or  \hspace{0.5cm} $\bullet \;d_F =8,\;d_{E/F} = 2g -4, \; h^0(E/F) = g-2$.
\end{lem}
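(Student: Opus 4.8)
The plan is to follow the proof of \cite[Lemma 2.3]{cl2}, exactly as in Lemma \ref{lem5.3}, but now with $d=2g+4$ in place of $d\le2g+2$. The essential new phenomenon is that some sections of $E/F$ may fail to lift to $E$, so the clean identity $3\gamma(E)=2\gamma(F)+\gamma(E/F)$ used before must be replaced by $3\gamma(E)=2\gamma(F)+\gamma(E/F)+2\delta$, where $\delta:=h^0(F)+h^0(E/F)-h^0(E)\ge0$ measures the defect. I would begin by noting that $\gamma(E)=\frac83$ together with $d=2g+4$ forces $h^0(E)=g+1$ directly from the definition of $\gamma$; this is the first assertion.

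Next, writing $t:=h^0(F)-2\ge1$ and applying Lemma \ref{pr} to $F$ (which by hypothesis has no line subbundle with $h^0\ge2$) gives $d_F\ge d_{2t}$, and hence $2\gamma(F)=d_F-2t\ge d_{2t}-2t\ge\min\{2t+3,g-1\}$ by \eqref{dr}. The argument then splits according to the size of $\gamma(E/F)$, paralleling the case division of \cite[Lemma 2.3]{cl2}.

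If $\gamma(E/F)\ge\Cl(C)=3$, then $8=3\gamma(E)=2\gamma(F)+\gamma(E/F)+2\delta\ge(2t+3)+3+0$ (the Brill--Noether branch $2\gamma(F)\ge g-1$ being impossible for $g\ge7$), so $t=1$ and $\delta=0$ and every inequality is an equality; using $d_2\ge7$ from \eqref{dr} to upgrade $d_F\ge d_2$ this yields $d_F=d_2=7$, $d_{E/F}=2g-3$, $h^0(E/F)=g-2$, the first listed profile, with all sections lifting. The case corresponding to \cite[formula (2.3)]{cl2} is eliminated precisely as in Lemma \ref{lem5.3}: it would force $t=1$ and then $d_2\ge g+1$, which is impossible.

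The substantive work, and the main obstacle, is the remaining case $\gamma(E/F)<3$, corresponding to \cite[formula (2.4)]{cl2}, where $E/F$ is pushed into the high-degree range with $h^1(E/F)\le1$. Here I would use Riemann--Roch to write $h^0(E/F)=d_{E/F}-g+1+h^1(E/F)$ and $\gamma(E/F)=2g-d_{E/F}-2h^1(E/F)$, and then combine $\delta\ge0$ with $d_F\ge d_2\ge7$ and $d_F+d_{E/F}=2g+4$ to obtain $d_F\le6+t+h^1(E/F)\le7+t$. Together with $d_F\ge d_{2t}$ and \eqref{dr} this again pins $t=1$ for $g\ge7$, so $h^0(F)=3$ and $d_F\in\{7,8\}$. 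Running through $h^1(E/F)\in\{0,1\}$ then produces exactly the two surviving profiles: $h^1(E/F)=1,\,d_F=7,\,d_{E/F}=2g-3,\,h^0(E/F)=g-1$ (with $\delta=1$, so one section does not lift), and $h^1(E/F)=1,\,d_F=8,\,d_{E/F}=2g-4,\,h^0(E/F)=g-2$ (with $\delta=0$); the option $h^1(E/F)=0,\,d_F=7$ coincides with the first case. The delicate point throughout is the bookkeeping of the defect $\delta$ and verifying that $h^0(E)=g+1$ is compatible with each profile, so that no spurious case survives.
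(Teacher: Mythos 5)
Your proposal is correct and takes essentially the same route as the paper: the paper pins $t=1$ by quoting \cite[formula (2.4)]{cl2} (which your defect identity $3\gamma(E)=2\gamma(F)+\gamma(E/F)+2\delta$, combined with Lemma \ref{pr} and \eqref{dr}, re-derives in the special case $d=2g+4$), then gets $h^0(E)=g+1$ from $\gamma(E)=\frac83$ and reads off the profiles by Riemann--Roch. Your write-up is a self-contained unpacking of the formulas the paper cites from \cite{cl2}, with the same case division and the same surviving profiles.
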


\begin{proof}
\cite[Formula (2.4)]{cl2} gives 
$$
\gamma(E) \geq \frac{2 \Cl(C) +6t -6}{3}.
$$
For $\gamma(E) = \frac{8}{3}$ we still need $t=1$, so $h^0(F)=3$, but it is now possible that $d_F = 8$. 

We have $h^0(E) = g+1$, since $\gamma(E) = \frac{8}{3}$. Hence $h^0(E/F)  \geq g-2$. The rest follows from Riemann-Roch.
\end{proof} 

\begin{prop} \label{prop5.5}
Let $C$ be a  curve of genus $g=7$ with $\Cl(C) =3$ and suppose that $\Cl_3(C) = \frac{8}{3}$. Then $E$ is stable and fits into an exact sequence \eqref{eq5.1} with $h^0(F) = 3$. Moreover one of the following holds
 \begin{itemize}
\item $d_F =7,\; d_{E/F} = 7,\; h^0(E/F) =3, \; h^0(E) = 6$,
\item $d_F =7,\; d_{E/F} = 9,\; h^0(E/F) =4, \; h^0(E) = 7$,
\item $d_F =7,\; d_{E/F} = 11,\; h^0(E/F) =5$ or $6, \; h^0(E) = 8$,
\item $d_ F =8,\; d_{E/F} = 10,\; h^0(E/F) =5, \; h^0(E) = 8$.
\end{itemize}  
\end{prop}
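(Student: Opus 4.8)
The plan is to reduce everything to the structural Lemmas \ref{lem5.1}, \ref{lem5.3} and \ref{lem5.4}, after first pinning down the admissible degrees and the stability of $E$. First I would extract the numerical constraints. Since $E$ computes $\Cl_3(C)=\frac83$, the identity $\gamma(E)=\frac{d}{3}-\frac{2h^0(E)}{3}+2=\frac83$ forces $d=2h^0(E)+2$, so in particular $d$ is even. Combined with $h^0(E)\ge 2n=6$ and $\mu(E)\le g-1=6$ (hence $d\le 18$), this leaves exactly $d\in\{14,16,18\}$, with $h^0(E)=6,7,8$ respectively. For the stability claim, since $\Cl(C)=3\le 4$, Lemma \ref{l2.1}(ii) gives $\Cl_2(C)=\Cl(C)=3$; thus $\Cl_3(C)=\frac83<\Cl_2(C)=\Cl(C)$, and Proposition \ref{newprop} applies directly to give stability of $E$.

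Next I would produce the rank-$2$ subbundle. As noted at the start of this section via Lemma \ref{pr} (using $d_9=16$, so $\frac{d_9}{3}-2=\frac{10}{3}>\frac83$), any bundle $E$ computing $\Cl_3(C)$ possesses a proper subbundle $F'$ with $h^0(F')\ge\rk F'+1$. Since $d\le 18\le 2g+5=19$, the hypothesis of Lemma \ref{lem5.1} is automatic (Remark \ref{rem5.3}), so that lemma rules out a line subbundle with $h^0\ge 2$. Consequently $F'$ cannot have rank $1$, and the saturation $F$ of $F'$ is a subbundle with $\rk F=2$ and $h^0(F)\ge 3$, yielding the exact sequence \eqref{eq5.1} with these properties and with $E$ still having no line subbundle of $h^0\ge 2$.

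Finally I would run the case analysis on $d$, reading the four conclusions off the structural lemmas. For $d=14$ and $d=16$ (both $\le 2g+2=16$), Lemma \ref{lem5.3} gives $d_F=d_2=7$, $h^0(F)=3$, $d_{E/F}=1+2h^0(E/F)$, and that all sections of $E/F$ lift; substituting $d_{E/F}=d-7$ yields $h^0(E/F)=3,\ h^0(E)=6$ in the first case and $h^0(E/F)=4,\ h^0(E)=7$ in the second. For $d=18=2g+4$, Lemma \ref{lem5.4} gives $h^0(E)=g+1=8$, $h^0(F)=3$, and precisely the two alternatives $d_F=7,\ d_{E/F}=11,\ h^0(E/F)\in\{5,6\}$ and $d_F=8,\ d_{E/F}=10,\ h^0(E/F)=5$. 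These four possibilities are exactly the four bullets of the statement.

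The argument is essentially an assembly of the preceding lemmas, so the only genuine care needed is the bookkeeping: verifying that the three admissible degrees are exactly those covered (no odd degree, nothing below $14$ or above $18$), and that the threshold $d\le 2g+2$ versus $d=2g+4$ correctly separates the regime of Lemma \ref{lem5.3} from that of Lemma \ref{lem5.4}. I expect no real obstacle beyond confirming that every computing bundle falls into one of these degree classes and that the rank-$2$ subbundle $F$ can be taken saturated without disturbing $h^0(F)\ge 3$.
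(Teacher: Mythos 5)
Your proposal is correct and follows essentially the same route as the paper: stability via Proposition \ref{newprop} (after noting $\Cl_2(C)=\Cl(C)=3$ from Lemma \ref{l2.1}), existence of the rank-2 subbundle via Lemma \ref{pr} and $d_9=16$ together with Lemma \ref{lem5.1}, and the four bullets read off from Lemmas \ref{lem5.3} and \ref{lem5.4} according to the degree. The only difference is that you spell out the bookkeeping (evenness of $d$, $14\le d\le 18$, the split $d\le 2g+2$ versus $d=2g+4$) which the paper compresses into the remark that $d\le 3g-3$ and the observation earlier in the section that $d$ must be even.
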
 

\begin{proof}
Stability of $E$ follows from Proposition \ref{newprop}. Since $d \leq 3g-3$, the rest follows from Lemmas \ref{lem5.1}, \ref{lem5.3} and \ref{lem5.4}.
\end{proof}

\begin{prop} \label{prop5.6}
Let $C$ be a curve of genus $g = 8$ with $\Cl(C) =3$ and suppose that $\Cl_3(C) = \frac{8}{3}$. Then $E$ is stable and fits into an exact sequence \eqref{eq5.1} with $h^0(F) = 3$. Moreover one of the following holds
 \begin{itemize}
\item $d_F =7,\; d_{E/F} = 7,\; h^0(E/F) =3, \; h^0(E) = 6$,
\item $d_F =7,\; d_{E/F} = 9,\; h^0(E/F) =4, \; h^0(E) = 7$,
\item $d_F =7,\; d_{E/F} = 11,\; h^0(E/F) =5, \; h^0(E) = 8$,
\item $d_F =7,\; d_{E/F} = 13,\; h^0(E/F) =6$ or $7, \; h^0(E) = 9$,
\item $d_ F =8,\; d_{E/F} = 12,\; h^0(E/F) =6, \; h^0(E) = 9$.
\end{itemize}                         
 For the general curve of genus $8$ only the last possibility can occur. 
\end{prop}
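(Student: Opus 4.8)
The plan is to mimic the proof of Proposition \ref{prop5.5}, pushing the argument up to the degree $d=20$ that occurs in genus $8$ but not in genus $7$, and then to add one gonality observation to handle the general curve. First I would record that $\Cl_2(C)=\Cl(C)=3$ by Lemma \ref{l2.1}, so that $\Cl_3(C)=\frac83<\Cl_2(C)=\Cl(C)$ and stability of $E$ is immediate from Proposition \ref{newprop}. Since $\gamma(E)=\frac83<3$ and $d\le 3g-3=21$ (so the hypothesis $d\le 2g+5$ of Lemma \ref{lem5.1} holds automatically by Remark \ref{rem5.3}), Lemma \ref{lem5.1} forces $E$ to have no line subbundle with $h^0\ge2$. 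The proper subbundle $F$ with $h^0(F)\ge\rk F+1$ supplied at the opening of this section must therefore have $\rk F=2$ and $h^0(F)\ge3$, producing the exact sequence \eqref{eq5.1}.

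Next I would enumerate the admissible degrees. As $\gamma(E)=\frac83$ forces $d$ even with $h^0(E)=\frac d2-1$, the constraints $h^0(E)\ge6$ and $d\le21$ leave $d\in\{14,16,18,20\}$. For the even values $d\le 2g+2=18$, Lemma \ref{lem5.3} applies and yields $d_F=7$, $h^0(F)=3$, $d_{E/F}=d-7$ and $h^0(E/F)=\frac{d_{E/F}-1}{2}$ with all sections of $E/F$ lifting; substituting $d=14,16,18$ produces the first three bullets. For $d=2g+4=20$, Lemma \ref{lem5.4} delivers $h^0(E)=g+1=9$, $h^0(F)=3$ and precisely the two alternatives $d_F=7$ (the fourth bullet, with $h^0(E/F)=6$ or $7$) and $d_F=8$ (the fifth bullet). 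This establishes the displayed list.

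For the final sentence I would specialise to a general curve of genus $8$, which is Petri, so by \eqref{dr2} one has $d_2=g+2-\left[\frac g3\right]=8$. Since neither $E$ nor therefore $F$ admits a line subbundle with $h^0\ge2$, the Paranjape--Ramanan Lemma \ref{pr}, applied to $F$ with $h^0(F)=3=2+1$, gives $d_F\ge d_2=8$. This excludes every case with $d_F=7$, i.e. the first four bullets, and only the fifth ($d_F=8=d_2$) survives.

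I do not expect a serious obstacle: the argument is an assembly of Lemmas \ref{lem5.1}, \ref{lem5.3} and \ref{lem5.4} together with one gonality fact, the only content beyond Proposition \ref{prop5.5} being the degree $d=20$ handled by Lemma \ref{lem5.4}. The points deserving care are the bookkeeping that $\{14,16,18,20\}$ exhausts the admissible degrees and the computation $d_2=8$ (rather than $7$) on the general curve; note also that the last assertion is purely an \emph{elimination} statement, so it suffices to rule out $d_F=7$ via Lemma \ref{pr}, and one neither needs nor can assert a priori that the fifth case is actually realised.
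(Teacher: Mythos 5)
Your proposal is correct and follows essentially the same route as the paper: stability via Proposition \ref{newprop} (using $\Cl_2(C)=\Cl(C)=3$ from Lemma \ref{l2.1}), the list of cases assembled from Lemmas \ref{lem5.1}, \ref{lem5.3} and \ref{lem5.4} after noting $d\in\{14,16,18,20\}$, and the elimination on the general curve from $d_2=8$. Your only cosmetic deviation is re-deriving the exclusion of $d_F=7$ via Lemma \ref{pr}, whereas the paper reads it off directly from the identification $d_F=d_2$ already built into Lemmas \ref{lem5.3} and \ref{lem5.4}; the two are equivalent.
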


\begin{proof}
Stability of $E$ follows from Proposition \ref{newprop}. Since $d \leq 3g-3$, the various possibilities for \eqref{eq5.1} follow from Lemmas \ref{lem5.1}, \ref{lem5.3} and \ref{lem5.4}. For the last assertion note that the general curve of genus 8 has $d_2 =8$. 
\end{proof}

For $g \geq 9$ we need to consider the possibility that $d \geq 2g+6$. For this we use the results of \cite[Section 3]{cl2}.

\begin{prop}  \label{prop5.7}
Let $C$ be a curve of genus $g \geq 9$ with $\Cl(C) = 3$ and suppose that $\Cl_3(C) = \frac{8}{3}$.
Then $d_2 = 7,\; 14 \leq d \leq 2g$ and $E$ is stable and fits into an exact sequence \eqref{eq5.1} with
$$
\rk F =2,\; d_F = 7,\; h^0(F) =3, \;  d_{E/F} = d-7, \; h^0(E/F) = \frac{d-8}{2}
$$
and all sections of $E/F$ lift to $E$.
\end{prop}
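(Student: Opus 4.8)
The plan is to strip off the numerical data, then rule out every degree $d\ge 2g+2$ by a maximal-slope analysis, and finally quote Lemma \ref{lem5.3} to obtain the stated sequence. For the preliminaries: from $\gamma(E)=\frac83$ and $\gamma(E)=\frac13\bigl(d-2(h^0(E)-3)\bigr)$ one sees that $d$ is even with $h^0(E)=\frac{d-2}2$, and since $E$ contributes to $\Cl_3(C)$ we have $h^0(E)\ge6$, so $d\ge14$. Stability of $E$ is immediate from Proposition \ref{newprop}, as $\Cl_3(C)=\frac83<3=\Cl_2(C)=\Cl(C)$, the last equality holding by Lemma \ref{l2.1}. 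Finally, since $d_9\ge16$ by \eqref{dr}, Lemma \ref{pr} shows that $E$ has a proper subbundle with $h^0>\rk$; otherwise $\gamma(E)\ge\frac{d_9}3-2\ge\frac{10}3>\frac83$.

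The heart of the argument is the bound $d\le2g$, which I would prove by contradiction, assuming $d\ge2g+2$. The isolated pair $g=9,\ d=24$ is settled at once by Lemma \ref{lemg9}, which forces $\gamma(E)\ge3>\frac83$; so I may assume $(g,d)\ne(9,24)$ and pick a subbundle $G$ of $E$ of maximal slope, of rank $1$ or $2$. If $\rk G=1$ I would run the estimate of \cite[Lemma 3.1]{cl2} as in the proof of Proposition \ref{prop2.4}: the terms $\frac{\Cl(C)+2\Cl_2(C)}3=3$ and $\frac{\Cl(C)}3+\frac{2d-6}9$ dispose of the case $h^0(G)\ge2$, while for $h^0(G)=1$ the maximal-slope improvement $3d_G\ge d-g$ supplies the term $2+\frac{d-g}9$, which exceeds $\frac83$ once $d>g+6$. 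If $\rk G=2$ I would use the five-term lower bound of \cite[Lemma 3.2]{cl2} employed already in Proposition \ref{prop2.2} and Lemma \ref{lem3.5}; with $\Cl(C)=\Cl_2(C)=3$ a direct check shows each term exceeds $\frac83$ for all $2g+2\le d\le3g-3$ once $(g,d)\ne(9,24)$, the binding inequalities being $1+\frac{2d-2g-6}9>\frac83$ and $\frac{d+2g-12}9>\frac83$ (each equivalent to $g\ge9$ at the least degree $d=2g+2$) together with $2+\frac{4g-d-6}9>\frac83$ (failing only at $d=4g-12$, which lies in $[2g+2,3g-3]$ solely when $g=9$, giving $d=24$). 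Hence $\gamma(E)>\frac83$ in every case, a contradiction, and so $d\le2g$.

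With $14\le d\le2g$ in hand the conclusion is routine. Since $d<2g+5$, Lemma \ref{lem5.1} shows $E$ has no line subbundle with $h^0\ge2$, so the subbundle produced above has rank $2$ with $h^0\ge3$; since $d<2g+2$, Lemma \ref{lem5.3} then yields $d_F=d_2=7$, $h^0(F)=3$, $d_{E/F}=1+2h^0(E/F)$, and that all sections of $E/F$ lift to $E$. Combining $h^0(E)=h^0(F)+h^0(E/F)$ with $h^0(E)=\frac{d-2}2$ gives $h^0(E/F)=\frac{d-8}2$ and $d_{E/F}=d-7$, which is exactly the asserted form, and in particular $d_2=7$. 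The delicate point is the maximal-slope analysis of the middle paragraph: one must verify that every term of the rank-$2$ bound strictly exceeds $\frac83$, where the three binding inequalities conspire to demand precisely $g\ge9$ (which is why the smaller genera $7$ and $8$ admit the extra degrees seen in Propositions \ref{prop5.5} and \ref{prop5.6}), and one must recognise that the single boundary degree $g=9,\ d=24$ slips through this bound and has to be removed separately by Lemma \ref{lemg9}.
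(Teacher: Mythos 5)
Your proposal is correct and follows essentially the same route as the paper's own proof: stability via Proposition \ref{newprop}, elimination of $d\ge 2g+2$ through the maximal-slope bounds of \cite[Lemmas 3.1 and 3.2]{cl2} (with exactly the same binding inequalities $d\ge g+11$, $d<4g-12$, $d>36-2g$, and the boundary case $g=9$, $d=24$ removed by Lemma \ref{lemg9}), and then Lemma \ref{lem5.3} for $14\le d\le 2g$. The only cosmetic difference is in the rank-1 maximal-slope case, where you use the Mukai--Sakai improvement $3d_G\ge d-g$ as in Proposition \ref{prop2.4} to push that term above $\frac83$ directly, while the paper instead uses \cite[formula (3.4)]{cl2} to force $d_L\le 2$ and rules this out by producing a rank-2 subbundle of larger slope (and you make explicit the appeal to Lemma \ref{lem5.1} that the paper leaves implicit); both are valid uses of the same ingredients.
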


\begin{proof}
Once again stability follows from Proposition \ref{newprop}. 

Suppose that $E$ possesses a subbundle $L$ of maximal slope of rank 1. The first and third numbers in the minimum of  \cite[Lemma 3.1]{cl2} are clearly $>\frac83$ (this requires only $d\ge11$). By \cite[formula (3.4)]{cl2}, we see that the second number can be replaced by $\frac{2\Cl_2(C)+d_L}3$, so we must have $d_L\le2$. It follows that $E$ has no line subbundle with $h^0 \geq 2$. Hence it is among the cases listed in Lemma \ref{lem5.4}. In particular every subbundle $F$ of maximal slope has rank $2$.

Let $F$ be such a subbundle and suppose $d \geq 2g+2$. The first 3 numbers in the minimum of the statement of \cite[Lemma 3.2]{cl2} are  $> \frac{8}{3}$ (the requirement for this is $d\ge g+11$).
The fourth number is $> \frac{8}{3}$ if and only if $d < 4g -12$. Since $d \leq 3g-3$, this holds always if $g \geq 10.$ For $g =9$ the fourth number is $> \frac{8}{3}$ for $d <3g-3$. The remaining case $g=9$, $d=24$ is covered by Lemma \ref{lemg9}. The last number is  $> \frac{8}{3}$ if and only if $d > 36 -2g$. This holds for $d \geq 2g+2$ if $g \geq 9$.

We are left with the case $d \leq 2g$. The result now follows from Lemma \ref{lem5.3}.
\end{proof}

\begin{theorem}\label{thm6.8}
Let $C$ be a curve of genus $g \geq 9$ with $\Cl(C) =3$. If $d_2 > 7$, and in particular if $g \geq 16$, then
$$
\Cl_3(C) = 3.
$$
For all $g \geq 9$ there exist curves with these properties.
\end{theorem}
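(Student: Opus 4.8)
The plan is to locate $\Cl_3(C)$ in the two-element set already forced by the general theory, and then to exclude the smaller value. Since $\Cl(C)=3\le4$, Lemma \ref{l2.1} gives $\Cl_2(C)=\Cl(C)=3$, and since $d_9\ge16$ we have $\frac{d_9}3-2\ge\frac{10}3>\frac83$; hence Theorem \ref{thm3.1} yields $\Cl_3(C)\ge\frac83$, while \eqref{e2.1} gives $\Cl_3(C)\le3$. For any bundle $E$ computing $\Cl_3(C)$ the quantity $3\gamma(E)=d-2h^0(E)+6$ is an integer, so $3\Cl_3(C)\in\ZZ$; the only admissible values in $[\frac83,3]$ are therefore $\frac83$ and $3$. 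It thus suffices to rule out $\Cl_3(C)=\frac83$.

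For the first assertion I would argue by contraposition using Proposition \ref{prop5.7}: that proposition shows that, for $g\ge9$ with $\Cl(C)=3$, the equality $\Cl_3(C)=\frac83$ forces $d_2=7$. Consequently, if $d_2>7$ then $\Cl_3(C)\ne\frac83$, and by the previous paragraph $\Cl_3(C)=3$.

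Next I would check that the extra hypothesis $g\ge16$ already implies $d_2>7$, so that the ``in particular'' clause reduces to the first assertion. By \eqref{dr} we have $d_2\ge\min\{\Cl(C)+4,g+1\}=7$, so it is enough to exclude $d_2=7$. If $d_2=7$, the realising line bundle $L$ is base-point free, since subtracting a base point would yield a line bundle of degree $6$ with $h^0\ge3$ and hence $d_2\le6$. Thus $L$ defines a morphism $C\to\PP^2$ onto a nondegenerate plane curve $\Gamma$ of degree $e\ge2$ with $7=e\cdot\deg(C\to\Gamma)$; as $7$ is prime this forces $e=7$ and birationality, so $g\le p_a(\Gamma)=\binom62=15$. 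Hence $g\ge16$ gives $d_2>7$.

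Finally, for the existence statement I would exhibit, for each $g\ge9$, a curve of genus $g$ with $\Cl(C)=3$ and $d_2>7$. When $g\ge16$ it suffices to produce a curve with $\Cl(C)=3$, since then $d_2>7$ automatically; a general $5$-gonal curve, for which $\Cl(C)=d_1-2=3$, does the job. For $9\le g\le15$ one must additionally guarantee $d_2\ge8$, i.e.\ the absence of a $g^2_7$. I expect this range to be the main obstacle: the plan is to take a sufficiently general member of the $5$-gonal locus and verify, by a Brill--Noether dimension count on that locus, that it admits no $g^1_4$, no $g^2_6$ (so that $\Cl(C)=3$ rather than $\le2$), and no $g^2_7$ (so that $d_2>7$). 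The delicate point is that these estimates must be carried out on the generic $5$-gonal curve rather than on the generic curve of genus $g$, and this is where the real work lies.
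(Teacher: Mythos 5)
Your first three steps are correct and coincide with the paper's own proof: pinning $\Cl_3(C)$ into $\{\frac83,3\}$ via Theorem \ref{thm3.1}, \eqref{e2.1} and the integrality of $3\gamma(E)$; obtaining $d_2>7\Rightarrow\Cl_3(C)=3$ as the contrapositive of Proposition \ref{prop5.7}; and showing that $d_2=7$ forces a birational plane septic model, hence $g\le p_a(\Gamma)=15$, so that $g\ge16$ implies $d_2>7$. Up to this point there is nothing to add.

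The genuine gap is the existence statement for $9\le g\le15$, which you explicitly defer as ``the real work''; since it is part of the theorem, the proposal is incomplete. Moreover, the missing step is much lighter than the Brill--Noether analysis on the $5$-gonal locus that you anticipate: a parametric dimension count suffices, and it recycles the septic argument you already made. By Riemann--Hurwitz, a degree-$5$ cover $C\to\PP^1$ of genus $g$ has $2g+8$ branch points, so $5$-gonal curves of genus $g$ form a family of dimension $(2g+8)-3=2g+5$. On the other hand, a curve of genus $g$ with $d_2=7$ is, by your own argument, birational to a plane septic with $15-g$ nodes, and such curves form a family of dimension at most $35-(15-g)-8=12+g$; likewise, curves with $\Cl(C)\le2$ have gonality at most $4$ or are smooth plane sextics (by the classification of curves of low Clifford index, see \cite{elms}), and these fill families of dimension at most $2g+3$. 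Since $12+g<2g+5$ and $2g+3<2g+5$ for $g\ge9$, a general $5$-gonal curve of genus $g$ has $\Cl(C)=3$ and $d_2>7$, completing the proof. This two-line dimension comparison is exactly how the paper concludes; no Brill--Noether theory on the gonal stratum is needed, because the curves to be avoided are confined to explicit small-dimensional families rather than being controlled by a Brill--Noether number.
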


\begin{proof}
The first assertion follows from Proposition \ref{prop5.7} once we know that $d_2 \geq 8$ whenever $g \geq 16$. In fact, if $d_2 =7$, then $C$ possesses as a plane model a septic. Hence $g \leq 15$. 

For $9 \leq g \leq 15$ note that by the Hurwitz formula the family of curves with Clifford index 3 is of dimension $2g + 5$. On the other hand, the family of plane septics of genus $g$ is of dimension $12 + g < 2g+5$. This proves the final statement.
\end{proof}

\begin{cor}
Let $C$ be a smooth complete intersection of $2$ cubics in $\PP^3$. Then
$$
\Cl_3(C) = \Cl(C) = 3.
$$
\end{cor}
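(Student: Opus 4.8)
The final statement concerns a smooth complete intersection $C$ of two cubics in $\PP^3$. The plan is to compute the genus and Clifford index of $C$ and then invoke Theorem \ref{thm6.8}. First I would compute the genus using the adjunction formula: for a complete intersection of surfaces of degrees $3$ and $3$ in $\PP^3$, we have $\deg C = 3 \cdot 3 = 9$ and $K_C = \cO_C(3+3-4) = \cO_C(2)$, so $\deg K_C = 2 \cdot 9 = 18 = 2g-2$, giving $g = 10$. Thus $C$ is a curve of genus $10$, which comfortably satisfies the hypothesis $g \geq 9$ of Theorem \ref{thm6.8}.

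**Verifying the Clifford index.** The key input is that $\Cl(C) = 3$. I would establish this by analyzing the gonality of $C$. Since $C$ lies in $\PP^3$ via a projectively normal embedding of degree $9$, one expects the hyperplane bundle and its geometry to control the low-degree linear series. The Clifford index of such a complete intersection is a classical computation: for a smooth curve of genus $10$ embedded as a complete intersection of two cubics, the gonality is $d_1 = 5$ and the minimal Clifford index is realized by a pencil, giving $\Cl(C) = d_1 - 2 = 3$. I would justify this either by direct examination of the Brill--Noether theory (the Brill--Noether number $\rho(10,1,k)$ becomes nonnegative at $k=6$, but the special geometry of the complete intersection forces a $g^1_5$) or by citing the known result that such curves have gonality $5$.

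**Applying the main theorem.** Once $g = 10$ and $\Cl(C) = 3$ are in hand, I would apply Theorem \ref{thm6.8}. That theorem requires $d_2 > 7$ to conclude $\Cl_3(C) = 3$ directly; however, for $g = 10$ the secondary criterion $g \geq 16$ does not hold, so I must verify the condition $d_2 > 7$ separately. This is the point requiring the most care. I would argue that $d_2 \geq 8$ by ruling out a plane septic model: if $d_2 = 7$, then by the reasoning in the proof of Theorem \ref{thm6.8}, $C$ would carry a $g^2_7$ and hence a plane septic model, but a smooth complete intersection of two cubics does not admit such a model (one can check this by comparing the canonical embedding or by the explicit description of the linear systems on $C$). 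With $d_2 > 7$ confirmed, Theorem \ref{thm6.8} yields $\Cl_3(C) = 3 = \Cl(C)$.

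**Main obstacle.** The hardest step is verifying the hypothesis $d_2 > 7$ for this specific curve, since $g = 10 < 16$ so the convenient sufficient condition in Theorem \ref{thm6.8} does not apply. The resolution hinges on showing that $C$ has no $g^2_7$, equivalently that it is not birational to a plane septic. This can be handled by a dimension count or by using the projective normality of the complete intersection embedding together with the uniqueness properties of its linear series; alternatively, one can appeal directly to the structure of $C$ as sitting in $\PP^3$ to constrain its two-dimensional linear systems of low degree.
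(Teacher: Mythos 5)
Your overall strategy (compute $g=10$, establish $\Cl(C)=3$, verify $d_2>7$, apply Theorem \ref{thm6.8}) is the same as the paper's, and the genus computation is fine, but the two substantive inputs are not actually established, and one of them is justified by a false claim. You assert that $C$ has gonality $d_1=5$ and that ``the minimal Clifford index is realized by a pencil.'' This is wrong: by \cite{elms}, a smooth complete intersection of two cubics in $\PP^3$ has Clifford \emph{dimension} $3$, which by definition means that neither $d_1$ nor $d_2$ computes $\Cl(C)$; the index is computed by the hyperplane bundle ($d_3=9$, $h^0=4$), and in fact $d_1=6$. Indeed, if $C$ carried a $g^1_5$ one would have $\gamma(g^1_5)=3=\Cl(C)$, making the Clifford dimension $1$ --- so your justification of $\Cl(C)=3$ contradicts the very fact you need in the next step.

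The second gap is that $d_2>7$ is the crux of the argument and you never prove it; you defer it to ``a dimension count or \dots projective normality,'' neither of which works as stated. The dimension count in the proof of Theorem \ref{thm6.8} shows only that \emph{some} curves of each genus have $d_2>7$; it cannot show that a \emph{specific} curve admits no $g^2_7$. The paper's proof obtains everything at once from \cite{elms}: since $C$ has Clifford dimension $3$, $d_2$ does not compute $\Cl(C)$; by \eqref{dr} we have $d_2\ge\Cl(C)+4=7$, and if $d_2=7\le g-1$ then $d_2$ would compute $\Cl(C)$, a contradiction; hence $d_2>7$ and Theorem \ref{thm6.8} applies. To repair your proposal you should replace both the gonality claim and the vague verification of $d_2>7$ by this appeal to the Clifford-dimension result of \cite{elms}.
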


\begin{proof}
It is known that $C$ has Clifford dimension 3, genus 10 and $\Cl(C) = 3$ (see \cite{elms}). In particular $d_2$ does not compute $\Cl(C)$. So $d_2 > 7$.
\end{proof}

The curves of this corollary are the only curves of Clifford dimension $\geq 3$ with $\Cl(C) = 3$ (see \cite{elms}).

\begin{rem} 
{\em
Suppose that $C$ is a curve of genus $g \geq 9$ with $\Cl(C) = 3$ and $d_2 = 7$. Then  $C$ possesses as a plane model a septic. For $g = 15$ this model is smooth and Theorem \ref{thm5.6} applies. In particular $\Cl_3(C) = \frac{8}{3}$ if and only if $h^0(E_H \otimes E_H) \geq 10$.

If $9 \leq g \leq 14$, then the general curve of this type is the normalisation of a nodal septic with nodes in general position, so Theorem \ref{thmnodal} applies and gives a somewhat more precise result.  
}
\end{rem}

\section{Coherent systems}\label{cs}

Recall that a {\it coherent system of type} $(n,d,k)$ on a curve $C$ is a pair $(E,V)$ where $E$ is a vector bundle of rank $n$ and degree $d$ on $C$ and $V$ is a linear subspace of $H^0(E)$ of dimension $k$. For any $\alpha > 0$ we define the $\alpha${\it -slope} of $(E,V)$  by
$$
\mu_{\alpha}(E,V) := \frac{d}{n} + \alpha \frac{k}{n}.
$$
The coherent system $(E,V)$ is called $\alpha${\it -stable} ($\alpha${\it -semistable}) if, for all proper coherent subsystems $(F,W)$ of $(E,V)$,
$$
\mu_{\alpha}(F,W) < (\leq)\  \mu_{\alpha}(E,V).
$$

\begin{prop} \label{prop7.1}
Suppose $E$ computes $\Cl_n(C)$ and $\Cl_r(C) \geq \Cl_n(C)$ for all $r\le n$. Then $(E,H^0(E))$ is $\alpha$-semistable for all $\alpha > 0$. If also $E$ is stable, then $(E,H^0(E))$ is $\alpha$-stable for all $\alpha > 0$.
\end{prop}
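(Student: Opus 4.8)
The plan is to reduce the $\alpha$-(semi)stability of $(E,H^0(E))$ to two purely numerical conditions on subbundles of $E$, and then to verify those conditions using the Clifford-index hypotheses. Write $s(F):=h^0(F)/\rk F$. For a coherent subsystem $(F,W)$ of type $(r,e,m)$, the inequality $\mu_\alpha(F,W)\le\mu_\alpha(E,H^0(E))$ is, after clearing denominators, equivalent to $(ne-rd)+\alpha(nm-r\,h^0(E))\le 0$, where $(n,d)=(\rk E,d_E)$. Since this expression is affine in $\alpha$, letting $\alpha\to\infty$ and $\alpha\to 0^+$ shows that it holds for all $\alpha>0$ precisely when $\mu(F)\le\mu(E)$ and $s(F)\le s(E)$. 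First I would note that replacing $F$ by its saturation and taking $W=H^0(F)$ only increases the $\alpha$-slope (the sections of the saturation still lie in $H^0(E)$), so it suffices to verify both conditions for subbundles $F$ with $W=H^0(F)$, the full-rank subsheaves being handled directly since their saturation is $E$. The condition $\mu(F)\le\mu(E)$ is exactly semistability of $E$, which holds because $E$ computes $\Cl_n(C)$.

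The heart of the matter is therefore to prove $s(F)\le s(E)$ for every subbundle $F$ of $E$. Here I would rewrite $s(F)=\tfrac12(\mu(F)+2-\gamma(F))$, and similarly for $E$, so that, given $\mu(F)\le\mu(E)$, the inequality $s(F)\le s(E)$ follows as soon as $\gamma(F)\ge\gamma(E)=\Cl_n(C)$. For a \emph{semistable} subbundle $F$ of rank $r$ I would distinguish two cases. If $h^0(F)<2r$, then $s(F)<2\le s(E)$ directly, using $h^0(E)\ge 2n$. If $h^0(F)\ge 2r$, then $F$ is semistable of rank $r\le n$ with $h^0(F)\ge 2r$ and $\mu(F)\le\mu(E)\le g-1$, so $F$ contributes to $\Cl_r(C)$, whence $\gamma(F)\ge\Cl_r(C)\ge\Cl_n(C)$ by hypothesis; the translation above then gives $s(F)\le s(E)$. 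Note that $\mu(F)\le g-1$ is automatic because $E$ is semistable with $\mu(E)\le g-1$.

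The remaining obstacle, and the step I expect to require the most care, is a subbundle $F$ that is \emph{not} semistable. Here the plan is to pass to the Harder--Narasimhan filtration $0=F_0\subset F_1\subset\cdots\subset F_\ell=F$, with semistable quotients $G_i=F_i/F_{i-1}$. Summing the inequalities $h^0(F_i)\le h^0(F_{i-1})+h^0(G_i)$ gives $h^0(F)\le\sum_i h^0(G_i)$, so the mediant estimate yields $s(F)\le\max_i s(G_i)$. Each $G_i$ is semistable with $\mu(G_i)\le\mu(G_1)\le\mu(E)$, the top factor $G_1\subset E$ having slope at most $\mu(E)$ by semistability of $E$; in particular $\mu(G_i)\le g-1$, so the two-case analysis of the previous paragraph applies to each $G_i$ and gives $s(G_i)\le s(E)$. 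Hence $s(F)\le s(E)$, which together with $\mu(F)\le\mu(E)$ completes the semistability statement.

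Finally, for the stability statement I would upgrade the inequalities to strict ones. With $E$ stable, every proper subbundle $F$ of rank $r<n$ satisfies $\mu(F)<\mu(E)$, so $ne-rd<0$; combined with $s(F)\le s(E)$, i.e. $nm-r\,h^0(E)\le 0$, this forces $(ne-rd)+\alpha(nm-r\,h^0(E))<0$ for every $\alpha>0$. For proper subsystems with $\rk F=n$ (either $F\subsetneq E$ of full rank, or $F=E$ with $W\subsetneq H^0(E)$) strictness is immediate from $e<d$ or $m<h^0(E)$. Combining the reduction of the first paragraph with these strict inequalities shows that $(E,H^0(E))$ is $\alpha$-stable for all $\alpha>0$.
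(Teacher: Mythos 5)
Your proof is correct, and its strategic core is the same as the paper's: both reduce $\alpha$-(semi)stability for all $\alpha>0$ to the section-ratio inequality $h^0(F)/\rk F\le h^0(E)/\rk E$ for subbundles $F$ (the slope inequality being just semistability of $E$), and both kill any violation by observing that a semistable bundle $G$ of rank $r\le n$ with $\mu(G)\le\mu(E)\le g-1$ and $h^0(G)\ge 2\rk G$ contributes to $\Cl_r(C)$, so $\gamma(G)\ge\Cl_r(C)\ge\Cl_n(C)=\gamma(E)$. Where you differ is in how the reduction to semistable pieces is carried out: the paper argues by contradiction and outsources this step entirely to Lemma 2.1 of \cite{cl4}, which says that a subbundle violating the ratio bound forces $\gamma(E)$ to exceed the minimum of $\gamma(G)$ over such $G$; you instead prove it from scratch via the Harder--Narasimhan filtration, the subadditivity $h^0(F)\le\sum_i h^0(G_i)$, the mediant estimate $s(F)\le\max_i s(G_i)$, and the identity $h^0/\rk=\tfrac12(\mu+2-\gamma)$ that converts $\gamma(G_i)\ge\gamma(E)$ into $s(G_i)\le s(E)$. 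Your argument is thus self-contained where the paper's is not, and you also make explicit two points the paper leaves tacit: the affine-in-$\alpha$ reduction (including the passage to saturations and the separate treatment of full-rank subsheaves) and the strictness bookkeeping needed for the $\alpha$-stability statement when $E$ is stable. The trade-off is length versus dependence: the citation gives the paper a three-line proof, while your version reconstructs the content of the cited lemma inside the proposition itself.
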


\begin{proof}
Write $h^0(E) = n + s$ with $s \geq n$. If $F$ is any subbundle of $E$, then $\mu(G) \leq \frac{d}{n}$ for any subbundle $G$ of $F$. We need to show that 
$$
\frac{h^0(F)}{\rk F} \leq \frac{n + s}{n}.
$$
If this is not true, then by \cite[Lemma 2.1]{cl4} we have
$$
\gamma(E) = \frac{d-2s}{n} > \min \left\{ \gamma(G) \; \left| \;   
\begin{array}{c}
G \; \mbox{semistable}, \rk G \leq n,\\
\frac{d_G}{\rk G} \leq \frac{d}{n}, \; \frac{h^0(G)}{\rk G} \geq \frac{n+s}{n}
\end{array} 
     \right\}.  \right.        
$$
All such $G$ contribute to $\Cl_{\rk G}(C)$. Since $\Cl_r(C) \geq \Cl_n(C)$ for all $r \le  n$, we obtain $\gamma(E) > \Cl_n(C)$, a contradiction.
\end{proof}

\begin{rem}\label{rmcs}{\em In the case $n=2$, the hypotheses of Proposition \ref{prop7.1} hold. For $n=3$, they reduce to $\Cl_3(C)\le\Cl_2(C)$. We have seen in this paper that this hypothesis does not always apply.} \end{rem}

\begin{rem}\label{rmcs2}{\em Under the same hypotheses as those of Proposition \ref{prop7.1}, it was proved in \cite{cl4} that $E$ is generated. We therefore have an evaluation sequence
\begin{equation}\label{eval}
0\to M_E\to H^0(E)\otimes\cO_C\to E\to0.
\end{equation}
A version of a conjecture of D. C. Butler \cite{b} states that, for general stable $E$, the kernel $M_E$ should be stable. Of course our bundles are not general, but it is still of interest to ask whether $M_E$ is stable (or semistable) when the hypotheses of Proposition \ref{prop7.1} hold. It has recently been noted by L. Brambila-Paz that the conclusion of the proposition is a necessary condition for the stability of $M_E$. For a line bundle $L$ on a non-hyperelliptic curve $C$, it follows from \cite[Theorem 1.3]{bo} that $M_L$ is stable (this has also been proved by E. Mistretta and L. Stoppino \cite[Corollary 5.5]{ms}).}
\end{rem}
 
\section{Further comments and open problems}\label{further}

There are several problems in connection with Section \ref{minimal}.

\begin{ques} {\em For curves of genus $g\ge14$ satisfying \eqref{eq:new1}, is it true that $\frac{d_9}3-2>\Cl_2(C)$?}
\end{ques}
\noindent{\bf Comment.} Note that by Lemma \ref{lem2.6} the inequality holds for $14\le g\le24$. If the answer to the question is yes, then Theorem \ref{thm2.9} holds for $g\ge16$. The cases $g=14$ and $g=15$ require further investigation.

\begin{ques}{\em Can we extend Theorem \ref{thm2.9} to values of $g$ below $16$?}
\end{ques}

\begin{ques}{\em On curves satisfying \eqref{eq:new1}, can we determine $\Cl_3(C)$ and identify bundles computing it? If so, do any of these bundles fail to be generated?}
\end{ques}
\noindent{\bf Comment.} In connection with the last question, see Proposition \ref{prop7.1} and Remark \ref{rmcs2}.

\medskip
Moving on to Section \ref{plane}, the following question looks interesting.

\begin{ques}{\em For the hyperplane bundle $H$ on a (smooth) plane curve, is it true that $h^0(E_H\otimes E_H)\ge10$?}
\end{ques}
\noindent{\bf Comment.} It seems possible that the answer to this question is known. Note that for a smooth plane curve of degree $\delta\ge7$, we have $\Cl_3(C)=\frac{2\delta-6}3$ if and only if the answer is yes.

\begin{ques}{\em If $C$ is the normalisation of a nodal plane curve $\Gamma$, under what conditions is it true that $d_4\ge2\delta-4$?}
\end{ques}

\begin{ques}{\em For a curve $C$ as in the previous question, under what conditions is it true that $C$ possesses a unique line bundle $H$ of degree $\delta$ with $h^0(H)=3$?}
\end{ques}

\noindent{\bf Comment.} We know this is true if $\nu=0$. It is also true whenever every pencil on $C$ is represented as a pencil of lines through a point of $\Gamma$. Under more restrictive conditions on $\nu$ than that given by \eqref{eqnu}, but without any assumptions of general position and allowing simple cusps as well as nodes, this is shown to be true in \cite[Theorems 2.4 and 5.2]{ck}.

\medskip
Turning to Section \ref{three}, we can ask

\begin{ques}{\em For curves of Clifford index $3$, can we determine when $\Cl_3(C)=\frac83$?}
\end{ques}
\noindent{\bf Comment.} Any such curve must be one of the following:
\begin{itemize}
\item a smooth plane septic;
\item a curve of genus $g$, $7\le g\le 14$, which is representable by a singular plane septic;
\item a curve of genus $8$ with $d_2=8$.
\end{itemize}


\begin{thebibliography}{CAV}
\bibitem{bo} L. Brambila-Paz and A. Ortega:
\emph{Brill-Noether bundles and coherent systems on special curves}.
in: L.Brambila-Paz et al (eds) Moduli spaces and vector bundles. London Math. Soc. Lecture Notes Ser. 359, 456--472. Cambridge Univ. Press, Cambridge (2009).
\bibitem{b} D. C. Butler:
\emph{Birational maps of moduli of Brill-Noether pairs}.
arXiv:alg-geom/9705009.
\bibitem{c} M. Coppens:
\emph{The gonality of general smooth curves with a prescribed plane nodal model}.
Math. Ann. 289 (1991), 89--93.
\bibitem{ck} M. Coppens and T. Kato:
\emph{The gonality of smooth curves with plane models}.
Manuscripta Math. 70 (1990), 5--25.
\bibitem{cm} M. Coppens and G. Martens:
\emph{Secant spaces and Clifford's theorem}.
Compositio Math. 78 (1991), 193--212.
\bibitem{elms} D. Eisenbud, H. Lange, G. Martens and F.-O. Schreyer:
\emph{The Clifford dimension of a projective curve}.
Compositio Math. 72 (1989), 173--204.
\bibitem{fo} G. Farkas and A. Ortega:
\emph{The maximal rank conjecture and rank two Brill-Noether theory}.
Pure and Appl. Math. Quarterly 7, no. 4 (2011), 1265--1296.
\bibitem{fo2} G. Farkas and A. Ortega:
\emph{Higher rank Brill-Noether theory on sections of K3 surfaces}.
arXiv:1102.0276.
\bibitem{h} R. Hartshorne:
\emph{Generalized divisors on Gorenstein curves and a theorem of Noether}.
J. Math. Kyoto Univ. 26 (1986), 375--386.
\bibitem{lmn} H. Lange, V. Mercat and P. E. Newstead: 
\emph{On an example of Mukai}. Glasgow Math. J. 54 (2012), 261--271, doi:10.1017/S0017089511000577.
\bibitem{cliff} H. Lange and P. E. Newstead: 
\emph{Clifford Indices for Vector Bundles on Curves}.
in: A. Schmitt (Ed.) Affine Flag Manifolds and Principal Bundles. Trends in Mathematics, 165--202. Birkh\"auser (2010).
\bibitem{cl2} H. Lange and P. E. Newstead: 
\emph{Lower bounds for Clifford indices in rank three}, Math. Proc. Camb. Philos. Soc. 150 (2011), 23--33.
\bibitem{cl4} H. Lange and P. E. Newstead: 
\emph{Generation of vector bundles computing Clifford indices}.
Arch. Math. 94 (2010), 251-256. 
\bibitem{cl} H. Lange and P. E. Newstead: 
\emph{Further examples of stable bundles of rank 2 with 4 sections}.
Pure and Appl. Math. Quarterly 7, no. 4 (2011), 1517--1528.
\bibitem{cl3} H. Lange and P. E. Newstead: 
\emph{Vector bundles of rank 2 computing Clifford indices}. arXiv:1012.0469, to appear in Comm. in Algebra.
\bibitem{cl1} H. Lange and P. E. Newstead: 
\emph{Bundles of rank 2 with small Clifford index on algebraic curves}.
arXiv:1105.4367, to appear in 60th birthday volume for G. van der Geer.
\bibitem{mlc} M. Lelli-Chiesa:
\emph{Stability of rank-3 Lazarsfeld-Mukai bundles on arbitrary K3-surfaces}.
arXiv:1112.2938.
\bibitem{m} V. Mercat:
\emph{Clifford's theorem and higher rank vector bundles}.
Int. J. Math. 13 (2002), 785-796.
\bibitem{ms} E. C. Mistretta and L. Stoppino:
\emph{Linear series on curves: stability and Clifford index}.
arXiv:1111.0304.
\bibitem{ms} S. Mukai, F. Sakai:
\emph{Maximal subbundles of vector bundles on a curve}.
Manuscr. Mathem. 52 (1985), 251-256.
\bibitem{pr} K. Paranjape and S. Ramanan: 
\emph{On the canonical ring of a curve}.
Algebraic Geometry and Commutative Algebra in Honor of Masayoshi Nagata (1987), 503-516. 
\bibitem{s} E. Sernesi:
\emph{On the existence of certain families of curves}.
Invent. Math. 75 (1984), 25--57.

\end{thebibliography}
\end{document}